\documentclass[11pt,superscriptaddress]{revtex4-1}
\usepackage[utf8]{inputenc}
\usepackage[T1]{fontenc}
\usepackage{amssymb}
\usepackage{amsmath}
\usepackage{amsthm}
\usepackage[colorlinks=false,pdfborder={0 0 0}]{hyperref}
\usepackage{url}

\begin{document}

\newtheorem{Def}{Definition}[section]
\newtheorem{Prop}[Def]{Proposition}
\newtheorem{Lem}[Def]{Lemma}
\newtheorem{Cor}[Def]{Corollary}
\newtheorem{Thm}[Def]{Theorem}
\newtheorem{remark}[Def]{Remark}

\title{Type II blow-up mechanism for supercritical harmonic map heat flow}

\begin{abstract}
  The harmonic map heat flow is a geometric flow well known to produce
  solutions whose gradient blows up in finite time.  A popular model
  for investigating the blow-up is the heat flow for maps
  $\mathbb R^{d}\to S^{d}$, restricted to equivariant maps.  This
  model displays a variety of possible blow-up mechanisms, examples
  include self-similar solutions for $3\le d\le 6$ and a so-called
  Type II blow-up in the critical dimension $d=2$.  Here we present
  the first constructive example of Type II blow-up in higher
  dimensions: for each $d\ge7$ we construct a countable family of Type
  II solutions, each characterized by a different blow-up rate.  We
  study the mechanism behind the formation of these singular solutions
  and we relate the blow-up to eigenvalues associated to linearization
  of the harmonic map heat flow around the equatorial map.  Some of
  the solutions constructed by us were already observed numerically.
\end{abstract}

\author{Paweł Biernat}
\email{pawel.biernat@gmail.com}
\affiliation{Institute of Mathematics, University of Bonn.}
\author{Yukihiro Seki}
\email{seki@math.kyushu-u.ac.jp}
\affiliation{Department of Mathematical Sciences, Faculty of Mathematics, Kyushu University.}

\maketitle

\section{Introduction}
Given a map $F:M\to N\subset \mathbb R^k$ between two Riemannian manifolds
$M$ and $N$ one can define a functional
\begin{align}
  \label{eq:49}
  E(F)=\frac{1}{2}\int_M |\nabla F|^2\,dV_M.
\end{align}
The critical points of this functional are referred to as harmonic
maps and one of the well established procedures used to prove the
existence of such objects, called harmonic map flow, was introduced by
\cite{Eells1964} in the 50's.  The basic intuition behind the harmonic
map flow is that it is a gradient flow for the functional
\eqref{eq:49}.  The partial differential equation governing the
harmonic map flow can be concisely expressed as
\begin{align}
  \label{eq:80}
    \partial_t F_t=(\Delta F_t)^\top,\quad F_t|_{t=0}=F_0,
\end{align}
where $v^\top$ is a projection of $v\in \mathbb R^k$ to $T_{F_t}N$ and
$F_{0}$ is some initial map $F_{0}:M\to N$.  As long as a solution to
\eqref{eq:80} stays smooth the value of $E(F_{t})$ is decreasing,
unless $F_{t}$ is a stationary point of $E$, that is a harmonic
map.  If the solution stays smooth for all times one can recover the
homotopy between the initial map $F_{0}$ and the limit $F_{\infty}$,
granted it exists, thus proving that there exists a harmonic map in
the homotopy class of $F_{0}$.  This is indeed true if manifold $N$
has everywhere nonpositive sectional curvature, however the claim is
false in general.

As in other geometric flows, what prevents the solutions from existing
in the long run, even when starting from smooth initial data, is the
onset of singularities.  In case of the harmonic map flow these
singularities take the form of discontinuities in $F_{t}$, meaning
that right before the singularity the quantity $\lvert\nabla F\rvert$
blows up.  One could cope with the singularities by introducing some
weak notion of solutions but with the loss of continuity of $F_{t}$ we
also lose the interpretation of $F_{t}$ as a homotopy.  Moreover,
there are examples of weak solutions to harmonic map flow, which are
smooth before and after the formation of the singularity but lose
uniqueness upon transitioning through the singularity.  We believe
that by studying the mechanisms behind the formation of singularities
we might be able to shed some light on this loss of uniqueness.  The
first step in such analysis is to recover the structure of singular
solutions, right before the singularity occurs.

In this paper we consider only a class of maps
$F_t:\mathbb R^d\to S^d$ (note that $S^{d}$ is the classical choice
for a simple positively curved manifold) for which equation
\eqref{eq:80} takes a form
\begin{align}
  \label{eq:83}
    \partial_t F_t=\Delta F_t+|\nabla F|^2 F_t.
\end{align}
Under further restriction of $F_t$ to a $1$-equivariant map
\begin{align}
  \label{eq:84}
  F_t(r,\omega)=(\cos(u(r,t)),\sin(u(r,t))\omega),\qquad \omega\in S^{d-1}
\end{align}
we arrive at
\begin{align}
  \label{eq:u}
  \partial_t u=\frac{1}{r^{d-1}}\partial_r\left(r^{d-1}\partial_r
    u\right)-\frac{d-1}{2r^2}\sin(2u).
\end{align}
The boundary conditions depend on the specific formulation of the
Cauchy problem for \eqref{eq:u}, for example demanding that the energy
density $\lvert \nabla F_{t}\rvert$ is finite yields $u(0,t)=0$ and
$\lim_{r\to\infty}u(r,t)=n\pi$ while $\partial_{r}u(r,t)$ satisfies a
certain growth bound at spatial inifinity.  In the next section we
discuss the conditions for the well-posedness of \eqref{eq:u} in
detail and show that \eqref{eq:u} yields smooth solutions under the
sole assumption of $u_{0}(r)/r$ being bounded (although these
solutions do not have, in general, finite initial energy
density).

The blow-up in \eqref{eq:u} can only occur at $r=0$ and it manifests
itself as
\begin{align*}
  \lim_{t\to T}|\partial_r u(0,t)|= \infty
\end{align*}
for some $T>0$; likewise the spatial scale associated to the blow-up
is given by
\begin{align*}
  R(t):=\frac{1}{\lvert\partial_r u(0,t)\rvert}.
\end{align*}
It is important to distinguish between two cases, depending on how
quickly the gradient tends to infinity: if
\begin{align*}
  \sup_{t<T}\lvert\sqrt{T-t}\,\partial_{r}u(0,t)\rvert<\infty,
\end{align*}
we say that the blow-up is of Type I, otherwise we call it a Type II
blow-up.  This distinction is firmly connected to the underlying
blow-up mechanism, in case of Type I blow-up the solutions are
asymptotically, as $t\to T$, self-similar and of the form
$u(r,t)=f(r/\sqrt{T-t})$.  Conversely, for Type II blow-up the
singularity must have a harmonic map at its core,
i.e. $u(r,t)=U(r/R(t))$ for $r=\mathcal O(R(t))$ with $U$ being the
stationary solution to \eqref{eq:u}.  These two blow-up types comprise
all possible blow-up scenarios, meaning that there are no blow-up
solutions with $\sqrt{T-t}/R(t)\to 0$ as $t\to T$.

Despite a relatively simple formulation, the problem \eqref{eq:u}
admits a wide range of possible blow-up mechanisms depending on
dimension $d$.  For $d=2$, the generic blow-up is of Type II and it is
realized by a shrinking harmonic map containing a finite energy so the
blow-up may be viewed as ``bubbling'' process, where some portion of
energy is trapped inside the singularity (see \cite{VandenBerg2003}
for a formal approach and \cite{Raphael2011} for a proof).  In higher
dimensions, $3\le d\le 6$, there exists a family
$(f_{n}(r/\sqrt{T-t}))_{n=0,1,\dots}$ of self-similar solutions to
\eqref{eq:u} \cite{Fan1999}, with $f_{0}$ corresponding to the generic
blow-up and each next $f_{n}$ being less stable than the previous one
\cite{Biernat2011}, and each corresponding to a Type I blow-up.  It is
also worth noting that these self-similar solutions can be used to
construct the non-unique weak solutions to \eqref{eq:u}
\cite{Biernat2011}, \cite{Germain2010}.  So far, the existence of Type
II blow-up in dimensions $3\le d\le 6$ has not been ruled out.

In even higher dimensions, $d\ge7$, there can be no Type I blow-up
(Bizoń and Wasserman \cite{Bizon2014} proved that there are no
self-similar solutions to \eqref{eq:u}).  Instead in
\cite{Biernat2014} one of the authors constructed (non-rigorously) a
countable family $(u_{n})_{n=0,1,\dots}$ of Type II solutions.  This
family of solutions display a remarkably similar stability hierarchy
to its lower-dimensional self-similar counterpart
$(f_{n}(r/\sqrt{T-t}))_{n=0,1,\dots}$, with $u_{0}$ corresponding to a
generic blow-up, $u_{1}$ being the co-dimension one solution and so
on.  Here we mimic this construction in a rigorous fashion and prove
that representatives of these solutions actually exist.

\begin{Thm}  \label{th:main-rate}
  Fix $d\ge8$ and $l\ge1$ or $d=7$ and $l\ge2$ and define a real number $\gamma$ as
  \begin{equation*}
    \gamma=\frac{1}{2}(d-2-\sqrt{d^{2}-8d+8}).
  \end{equation*}
  Then there exists a smooth solution $u_{l}$ to \eqref{eq:u} blowing
  up at time $T$, which is characterized by a blow-up rate
  \begin{align}
    \label{eq:81}
    R(t)=\frac{1}{\lvert\partial_r u_{l}(0,t)\rvert}=
(\alpha_{l}+o(1))(T-t)^{\frac{l}{\gamma}}
\quad \text{ as } t\to T
  \end{align}
  with some constant $\alpha_{l}>0$.  For given $d$ and $l$ there
  holds $\frac{l}{\gamma}>\frac{1}{2}$, that is in each case the
  blow-up is of Type II.
\end{Thm}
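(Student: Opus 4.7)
My plan is to construct $u_{l}$ by matched asymptotic expansions coupled with a modulation-and-topological-shooting argument, rigorising the picture of \cite{Biernat2014}. The ansatz is organised around two intrinsic length scales, the inner blow-up scale $R(t)$ (to be determined) and the parabolic scale $\sqrt{T-t}$, which produces a three-region decomposition: an inner region $r\lesssim R(t)$ in which $u\approx U(r/R(t))$ for $U$ an equivariant stationary harmonic map; a self-similar intermediate region $R(t)\ll r\ll\sqrt{T-t}$ through which $u$ is close to the equatorial value $\pi/2$; and an outer region $r\gtrsim\sqrt{T-t}$ that remains regular through $t=T$.

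The first step is the formal analysis fixing the rate. The large-$y$ tail of $U(y)$ is controlled by the linearisation of the stationary equation at $u\equiv\pi/2$; substituting $u=\pi/2+y^{\sigma}$ gives the indicial equation $\sigma^{2}+(d-2)\sigma+(d-1)=0$, whose discriminant is exactly $d^{2}-8d+8$. For $d\ge 7$ both roots are real and negative, and $\gamma$ in the theorem equals the smaller of $|\sigma_{\pm}|$. In the self-similar variables $\xi=r/\sqrt{T-t}$, $\tau=-\log(T-t)$, the linearisation at $\pi/2$ produces a Hermite-type parabolic operator whose eigenvalues are (up to normalisation) integer-spaced, and whose $l$-th eigenfunction $\phi_{l}(\xi)$ behaves like $\xi^{-\gamma}$ as $\xi\to 0$. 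Matching the inner tail $U(r/R)-\pi/2\sim c(r/R)^{-\gamma}$ with the outer mode $e^{\lambda_{l}\tau}\phi_{l}(\xi)$ in the overlap forces $R(t)^{\gamma}\sim\alpha_{l}^{\gamma}(T-t)^{l}$, giving the desired rate. The inequality $l/\gamma>1/2$ is then a direct arithmetic check using the explicit form of $\gamma$, and it holds precisely on the range of $(d,l)$ stated in the theorem.

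The second step promotes the formal picture to an actual solution. I would build an approximate solution $u_{\mathrm{app}}$ by smoothly gluing the rescaled bubble and the chosen outer eigenmode, adding higher-order corrections iteratively so that the residual $e=\partial_{t}u_{\mathrm{app}}-\Delta u_{\mathrm{app}}+\frac{d-1}{2r^{2}}\sin(2u_{\mathrm{app}})$ is small in a weighted Sobolev space adapted to both linearised operators and is orthogonal to the leading modulation directions. Writing $u=u_{\mathrm{app}}+w$, I would promote $R(t)$ together with the coordinates along the (finitely many) unstable eigendirections of the inner and outer linearisations to dynamical modulation parameters, derive the resulting coupled modulation ODEs and parabolic equation for $w$, and close a bootstrap for $w$ on $[0,T)$ in the adapted norm. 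A Brouwer-type topological shooting (equivalently a Lyapunov--Schmidt reduction) then selects the finite-codimensional set of initial data along which the unstable coordinates vanish uniformly in $t<T$, yielding $u_{l}$ with the prescribed rate.

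I expect the main obstacle to be the simultaneous control of the inner-to-outer crossover and of the unstable directions. Designing an energy whose coercivity modulo the modulation parameters survives the transition from the Hardy-type structure produced by the bubble $U$ to the dispersive self-similar structure governing the outer region is delicate: the weights must be compatible with both regimes and orthogonal to the $l-1$ (or more) outer unstable modes that must be suppressed as $l$ grows. Moreover, since the problem is supercritical there is no conserved energy available, so every estimate has to be a dynamical one bootstrapped through the blow-up time; constructing the adapted weighted spaces and proving the corresponding coercivity inequality is, in my view, the technical heart of the argument.
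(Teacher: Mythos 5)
Your formal derivation agrees with the paper's (indicial equation at the equator, eigenvalue spacing, inner--outer matching fixing $R(t)^{\gamma}\sim(T-t)^{l}$), and the final Brouwer/degree shooting step to kill the $l$ unstable modes is also the paper's device. But the rigorous core you propose is a genuinely different route: you want an approximate solution with modulated parameters, a remainder $w$ controlled in a weighted Sobolev norm, and a coercive Lyapunov functional that interpolates between the inner Hardy structure around $U$ and the self-similar Hermite-type operator outside. The paper deliberately avoids constructing such an energy. Instead it (i) controls the inner region $y\lesssim Ke^{-\omega_{l}s}$ purely by the comparison principle, exhibiting explicit sub/supersolutions built from $U_{\alpha\delta}$ and $U_{\alpha/\delta}$ plus a second-order correction $q(\xi)$ (Lemma~\ref{lem:inner}); and (ii) controls the intermediate region $Ke^{-\omega_{l}s}\le y\le e^{\sigma s}$ not by an $L^{2}$ energy identity but by \emph{pointwise} semigroup estimates, reducing the action of $e^{-(s-s_{0})A}$ to a maximal function through Muckenhoupt's Poisson-kernel bound for Laguerre expansions (Lemma~\ref{estim-semigroup}), which is then combined with Duhamel and the spectral cancellations enforced by $P_{s_{0},s_{1}}(q)=0$. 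This Herrero--Vel\'azquez-style machinery trades the delicate coercivity design you flag as your ``technical heart'' for robust parabolic pointwise bounds plus the maximum principle; the price is that one must work with discontinuous, infinite-Dirichlet-energy initial data and establish well-posedness and the classical-solution structure in the Gaussian-weighted space $\mathcal H$ first (Proposition~\ref{th:existence-u}, Lemma~\ref{th:classical}). Your modulation/energy plan is plausible in principle and closer to constructions in the dispersive literature, but as stated it leaves open precisely the part the paper's method is engineered to bypass, namely a coercive norm valid uniformly across the inner-to-outer crossover for a non-finite-energy bubble; without a concrete candidate for that norm and its spectral gap modulo the modulation directions, your step two remains a programme rather than a proof.
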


\begin{remark}
  The peculiar looking condition ``$d\ge8$ and $l\ge1$ or $d=7$ and
  $l\ge2$'' becomes more telling if one writes it as ``for any $d\ge7$
  pick $\lambda_{l}>0$'', where $\{\lambda_{l}\}$ are eigenvalues of a
  certain self-adjoint operator introduced later on.  The reason for
  the two cases $d\ge8$ and $d=7$ is that in $d=7$ the eigenvalue
  $\lambda_{1}$ is equal to zero.
\end{remark}

\begin{remark}
  Theorem \ref{th:main-rate} is strictly speaking an existence result
  and we do not claim anything about the blow-up rate of generic
  solutions nor about the stability of the constructed
  solutions.  There is, however, strong numerical evidence
  \cite{Biernat2014} that the solutions constructed by us for $d\ge8$
  and $l=1$ correspond to the generic blow-up scenario.
\end{remark}

\begin{remark}
  Our proof bases on the matched asymptotics method combined with a
  priri estimates and topological arguments, first developed by
  Herrero and Vel\'azquez
  \cite{herrero-velazquez-unpublished}.  Later, this method was
  applied to determine the blow-up rates for several parabolic
  problems with singularity formation; examples include two-dimensional chemotaxis
  model \cite{Herrero1996, Vela2002}, the problem of ice ball melting
  \cite{Herrero1997}, the problem of dead core formation   \cite{guo2008finite, Seki2011}.  The main advantage of this method is that it
  directly relates the blow-up rate to spectral information of the linearized
  operator governing the evolution near the equatorial map
  $u(r,t)=\pi/2$, thus giving a clear qualitative description of the
  singular solution in the whole domain $r\ge 0$ (see
  Theorem~\ref{lem:intermediate-implications}).
\end{remark}

\section{Formal derivation of the blow-up rate}
\label{sec:derivation-blow-up}
In this section we briefly review the steps from \cite{Biernat2014}
leading to a construction of the family of nonrigorous approximate
solutions to \eqref{eq:u}.  Let us define the similarity variables
\begin{align}
  \label{eq:7}
  y=\frac{r}{\sqrt{T-t}},\qquad s=-\log(T-t), \qquad f(y,s)=u(r,t).
\end{align}
In similarity variables equation \eqref{eq:u} becomes
\begin{align}
  \label{eq:f}
  \partial_{s}f=\partial_{yy}f+\left(\frac{d-1}{y}-\frac{y}{2}\right)\partial_{y}f-\frac{d-1}{2y^{2}}\sin(2f),\qquad f(0,s)=0.
\end{align}
with the boundary condition $f(0,s)=0$ coming from
$f(0,s)=u(0,t)=0$.  Numerical experiments suggest that singular
solutions to \eqref{eq:u} correspond to solutions of \eqref{eq:77}
converging to $f(y,s)=\frac{\pi}{2}$.  So as a first step we linearize
the equation \eqref{eq:77} around $\frac{\pi}{2}$ by defining a
variable $\psi$ such that $f=\frac{\pi}{2}+\psi$.  Let us formally
write the equation for $\psi$ as
\begin{align}
  \label{eq:ss}
  \partial_{s}\psi&=-\mathcal A\psi+F(\psi)\\
  F(\phi)(y)&=\frac{d-1}{2y^{2}}(\sin(2\phi(y))-(2\phi(y))),
\end{align}
with the linear part represented by the formal operator $\mathcal A$,
given by
\begin{align}
  -\mathcal A\phi(y)=\frac{1}{\rho}\frac{d}{dy}\left(\rho \frac{d\phi}{dy}(y)\right)+\frac{d-1}{y^{2}}\phi(y),\qquad\rho=y^{d-1}e^{-\frac{y^{2}}{4}}.
\end{align}
For $d\ge7$, the (formal) operator $\mathcal A$ can be uniquely
extended to a proper semibounded self-adjoint operator $A$ acting in
some Hilbert space $\mathcal H$ but the details of the functional
setup are inessential for the nonrigorous approach, so let us skip
them for now.  We then pick a positive eigenvalue of $A$, denoted as
$\lambda_{l}>0$, and we define a single-mode approximation to
\eqref{eq:f} as
\begin{align}
  \label{eq:74}
  f_{out}(y,s):=\frac{\pi}{2}+a_{l}(0)e^{-\lambda_{l}s}\phi_{l}
\end{align}
with $\phi_{l}$ being the corresponding eigenvector of $A$, while
$a_{l}(0)$ is an arbitrary real coefficient (in fact, the matching
condition \ref{eq:eps} below restricts $a_{l}(0)<0$).  It is easy to
see that $\psi(s,y)=f_{out}(y,s)-\frac{\pi}{2}$ solves the linear part
of \eqref{eq:ss} so it makes for a reasonable first-order
approximation.  Unfortunately, the eigenfunctions $\{\phi_{n}\}$ are
singular at the origin: $\phi_{n}(y)\sim y^{-\gamma}$ for some
$\gamma>0$, so $f_{out}(y,s)$ fails to satisfy the boundary condition
at the origin $f(0,t)=0$.  The divergence of $\phi_{n}$ at the origin
is a consequence of us linearizing the equation \eqref{eq:f} around a
``solution'' $f(y,s)=\frac{\pi}{2}$, which fails to satisfy the
boundary condition at the origin by itself.

If we consider the region of validity of the linear approximation to
be defined by $\lvert f_{out}(s)-\frac{\pi}{2}\rvert\ll 1$, which is equivalent to
$y\gg e^{-\frac{\lambda_{l}}{\gamma} s}$, then we get
\begin{align}
  \label{eq:76}
  f(y,s)\approx f_{out}(y,s)=\frac{\pi}{2}+a_{l}(0)e^{-\lambda_{l}s}\phi_{l},\qquad e^{-\frac{\lambda_{l}}{\gamma} s}\ll y.
\end{align}
Note that the region of validity of $\psi_{out}$ expands towards zero,
which implies an existence of a shrinking boundary layer, where the
true solution rapidly transitions from the boundary value $f(0,s)=0$
to $f(y,s)\approx\frac{\pi}{2}$.

To describe this boundary layer we introduce a natural spatial-scale
given by the gradient of $f(y,s)$ at the origin
\begin{align*}
  \xi=\frac{y}{\varepsilon(s)},\qquad\varepsilon(s):=\frac{1}{\lvert\partial_{y}f(0,s)\rvert},
\end{align*}
so $\varepsilon(s)$ roughly corresponds to the (so far unknown) width
of the boundary layer.  Let us also define a new variable
$U(\xi,s):=f(r,t)$, satisfying
\begin{align*}
\varepsilon^{2}\partial_{s}U=\partial_{\xi\xi}U+\left(\frac{d-1}{\xi}+(2\dot\varepsilon\varepsilon-\varepsilon^{2})\frac{\xi}{2}\right)\partial_{\xi}U-\frac{(d+1)}{2\xi^{2}}\sin(2U),\qquad U(0,s)=0.
\end{align*}
Anticipating $\varepsilon(s)\to 0$ as $s\to \infty$ we drop the
corresponding terms and arrive at
\begin{align}
  \label{eq:11}
  0=\partial_{\xi\xi}U+\frac{d-1}{\xi}\partial_{\xi}U-\frac{(d+1)}{2\xi^{2}}\sin(2U),\qquad U(0,s)=0
\end{align}
which is effectively an ordinary differential equation (with a unique
solution since we fixed the derivative at the origin in the definition
of $\varepsilon$ and $\xi$:
$\partial_{\xi}U(0,s)=1$).  Leading to an approximation via stationary
solution $U(\xi,s)\approx U_{1}(\xi)$ and consequently to
\begin{align}
  \label{eq:78}
  f(y,s)\approx f_{inn}(y,s):=U_{1}\left(\frac{y}{\varepsilon(s)}\right).
\end{align}
This time the approximation \eqref{eq:78} satisfies the boundary
condition at the origin.  However, the approximation by $f_{inn}$ is
also of a limited scope as it works only for
$\varepsilon^{2}\xi\ll1/\xi$, or equivalently $y\ll 1$.  Still, this
is enough to construct a global approximation: the region where both
approximations (\eqref{eq:78} and \eqref{eq:74}) are admissible exists
for $e^{-\frac{\lambda_{l}}{\gamma}s}\ll y\ll 1$.  This region of two
overlapping approximations allows us to compare both of them and make
sure they are compatible by picking a specific value of $\varepsilon$.

On one hand, from the asymptotic analysis for the equation
\eqref{eq:11} for large $\xi$ we get
$U_{1}(\xi)=\pi/2-h\xi^{-\gamma}+\mathcal O(\xi^{-\gamma+2})$, with a
positive coefficient $h=h(d)$, so
\begin{align*}
  f_{inn}(y,s)\approx\frac{\pi}{2}-h(y/\varepsilon(s))^{-\gamma},\qquad \epsilon(s)\ll y.
\end{align*}
On the other hand, we already mentioned that the mode $\phi_{1}$
behaves as $y^{-\gamma}$ near the origin leading to
\begin{align*}
  f_{out}(y,s)=\frac{\pi}{2}+a_{l}(0)e^{-\lambda_{l}s}\phi_{l}(y)\approx\frac{\pi}{2}+a_{l}c_{l}y^{-\gamma}e^{-\lambda_{l}s},\qquad y\ll 1,
\end{align*}
where we have used $\phi_{l}(y)=c_{l}y^{-\gamma}+\mathcal O(y^{2-\gamma})$
close to $y=0$.  Demanding that $f_{out}$ matches $f_{inn}$ up to
the leading order term, one arrives at the matching condition:
\begin{align}
  \label{eq:eps}
  \varepsilon(s)\approx
\left(\frac{-a_{l}(0)c_{l}}{h}\right)^{\frac{1}{\gamma}}e^{-\frac{\lambda_{l}}{\gamma}s},
\end{align}
giving us the innermost spatial scale of the solution, as expected
$\varepsilon(s)\to0$ as $s\to\infty$.  But, by definition,
$\varepsilon$ is related to the gradient at the origin, so we end up
with the following blow-up rate
\begin{align*}
  \frac{1}{R(t)}=\lvert\partial_{r}u(0,t)\rvert=\frac{\lvert \partial_{y}f(0,s)\rvert}{\sqrt{T-t}}=\frac{U_{1}'(0)}{\varepsilon(s)\sqrt{T-t}}\approx \frac{\kappa}{(T-t)^{-\frac{1}{2}-\frac{\lambda_{l}}{\gamma}}}.
\end{align*}
In the above formula the constant $\kappa$ depends on initial only
(through $a_{l}(0)$).

\begin{remark}
  The above computations fail for neutral eigenvalues
  ($\lambda_{l}=0$). Naturally, in that case it is necessary to
  include the interaction coming from the nonlinear term, which leads
  to logarithmic corrections to the blow-up rate.  In this paper we
  deal only with the blow-up rates coming from the strictly positive
  eigenvalues, so we don't go into detail how to derive the blow-up
  rates when $\lambda_{l}=0$, instead an interested reader is referred
  to \cite{Biernat2014}.
\end{remark}

In this paper we prove that one can find an initial data, leading to
smooth solutions to \eqref{eq:u} blowing up at time $T$ with the
blow-up rate
\begin{align}
  \label{eq:82}
  R(t)\approx (T-t)^{\frac{1}{2}+\frac{\lambda_{l}}{\gamma}},
\end{align}
where $\lambda_{l}$ stands for any positive eigenvalue of the operator
$A$.  Replacing $\lambda_{l}$ with its explicit value
$\lambda_{l}=-\frac{\gamma}{2}+l$ (see Lemma~\ref{th:extension}) and incorporating
the condition $\lambda_{l}>0$, we arrive at the blow-up rate stated
in Theorem~\ref{th:main-rate}.

\section{Preliminaries}
The initial data that we use in our construction is discontinuous,
thus we have to prove a well-posedness of the Cauchy problem
\eqref{eq:u} in the relevant space of discontinuous
functions.  Establishing such a result is only possible thanks to the
equivariant symmetry the ansatz \eqref{eq:84}, which ensures that the
singularity can only occur at the origin, and discontinuities at other
points $r>0$ will simply be smoothed out by the heat kernel.  In
particular, the Lemma below implies that the Cauchy problem
\eqref{eq:u} is well posed even for initial data with infinite initial
Dirichlet energy.  For comparison, the proof well-posedness for
general maps $F$ without any symmetry assumptions requires slightly
larger regularity of initial data:
$\lVert\nabla F_{0}\rVert_{\infty}<\infty$ (this result was first
derived by \cite{Eells1964}, see also \cite{Lin2008} or
\cite{Taylor2011} for more modern and concise approaches).  The
discontinuity of our initial data, however, does not play any
essential role in our argument; it simply removes the necessity of
introducing smoothed out indicator functions and therefore simplifies
computations.

The first part of this section is devoted to restating \eqref{eq:u} as
a proper Cauchy problem and to showing its well-posedness and some
estimates on a solution $u$ in
Proposition~\ref{th:existence-u}.  Then, we take a closer look at the
singular solution $\pi/2$ that we already used to derive a
non-rigorous blow-up rate.  We show that any solution $u$ from
Proposition~\ref{th:existence-u} can be interpreted as a classical
solution in a Hilbert space arising from studying the linear stability
of the solution $\pi/2$.

\begin{Prop}
  \label{th:existence-u}
  Assume that $K:=\sup_{r\ge0}\lvert u_0(r)/r\rvert<\infty$, then
  there is $t_{1}(u_0)>0$ such that there exists a unique solution
  $u\in C^\infty((0,t_1]\times [0,\infty))$ to \eqref{eq:6} fulfilling
  \begin{subequations}
    \begin{align}
      \label{eq:63}
      \lvert\partial_r^{(n)}\partial_t^{(k)} (u(r,t)/r)\rvert\le
      c_{n,k}(t,K)<\infty,\qquad t\in(0,t_{1}]\\
      \label{eq:64}
      \lim_{t\to0+}u(r,t) = u_0(r) \text{ a.e.}
    \end{align}
  \end{subequations}
\end{Prop}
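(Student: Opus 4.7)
The plan is to reduce the singular semilinear equation \eqref{eq:u} to a non-singular semilinear heat equation on $\mathbb{R}^{d+2}$ by setting $w(r,t) = u(r,t)/r$ and applying standard local theory for $L^{\infty}$ data. A direct calculation from $\Delta u = \partial_r^{2} u + \frac{d-1}{r}\partial_r u$, combined with the Taylor expansion $v - \sin v = v^{3}\widetilde G(v)$ where $\widetilde G(v) := (v - \sin v)/v^{3}$ extends to a smooth \emph{even} function with $\widetilde G(0) = 1/6$, yields
\begin{equation*}
  \partial_t w = \Delta_{d+2} w + 4(d-1)\, w^{3}\, \widetilde G(2rw),
  \qquad \Delta_{d+2} = \partial_r^{2} + \tfrac{d+1}{r}\partial_r .
\end{equation*}
The initial condition becomes $w_{0}(r) = u_{0}(r)/r$ with $\|w_{0}\|_{L^{\infty}} \le K$. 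Evenness of $\widetilde G$ ensures that the nonlinearity $N(x,w) := 4(d-1) w^{3}\widetilde G(2|x|w)$ is a smooth function of $|x|^{2}$ and $w$, so it extends to a smooth radial nonlinearity on all of $\mathbb{R}^{d+2}$. The boundary condition $u(0,t) = 0$ is automatic since $u = rw$ and $w$ is bounded.

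For local existence I would apply a contraction mapping argument to the Duhamel formulation
\begin{equation*}
  w(t) = e^{t\Delta_{d+2}} w_{0} + \int_{0}^{t} e^{(t-s)\Delta_{d+2}} N(\cdot, w(s))\, ds
\end{equation*}
on the closed ball $\{ w \in L^{\infty}([0,t_{1}]\times \mathbb{R}^{d+2}) : \|w\|_{\infty} \le 2K \}$. Since $\|e^{t\Delta_{d+2}}\|_{L^{\infty}\to L^{\infty}} \le 1$ and, because $\widetilde G$ is bounded with bounded derivatives, $w \mapsto N(\cdot, w)$ is Lipschitz on bounded $L^{\infty}$-balls, choosing $t_{1} = t_{1}(K)$ small enough makes the Duhamel map a contraction, producing a unique mild solution. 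Radial symmetry of the initial datum and of the nonlinearity is preserved, so $w$ is radial.

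Smoothness for $t > 0$ is obtained by a standard bootstrap based on the heat-kernel estimates $\|\partial_{x}^{\alpha} e^{t\Delta_{d+2}} f\|_{\infty} \le C_{\alpha}\, t^{-|\alpha|/2} \|f\|_{\infty}$ applied inductively to the Duhamel representation, using the smoothness of $N$. This gives $w \in C^{\infty}((0,t_{1}]\times \mathbb{R}^{d+2})$ in the radial sense, and translating back to $u/r = w$ delivers the bounds \eqref{eq:63}. For the initial trace \eqref{eq:64}, the Gauss--Weierstrass convolution $e^{t\Delta_{d+2}} w_{0}$ converges to $w_{0}$ at every Lebesgue point, while the Duhamel correction is $O(t)$ in $L^{\infty}$, so $w(\cdot,t) \to w_{0}$ and hence $u(\cdot,t) = r\, w(\cdot,t) \to u_{0}$ almost everywhere.

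The crucial step, and the only real obstacle, is the algebraic reduction at the start: one must verify that in the substitution $w = u/r$ the singular term $\frac{d-1}{2r^{2}}\sin(2u)$ combines with the $\frac{d-1}{r^{2}}w$ piece coming from $\Delta u$ so that all negative powers of $r$ cancel. This cancellation is exactly what produces the smooth nonlinearity $w^{3}\widetilde G(2rw)$ and lets us view the whole problem as a semilinear heat equation on $\mathbb{R}^{d+2}$. Once this cancellation is in place, the remainder of the argument is textbook parabolic theory in $L^{\infty}$ and the singularity at $r = 0$ has been effectively removed.
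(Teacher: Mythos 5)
Your proposal is correct and follows essentially the same route as the paper: the same substitution $w=u/r$, the same identification of the equation as a radial semilinear heat equation on $\mathbb R^{d+2}$ with the nonlinearity $4(d-1)w^3\widetilde G(2rw)$, a Duhamel/mild-solution argument in $L^{\infty}$ (the paper cites Lunardi, you unpack the contraction explicitly), parabolic bootstrap for interior smoothness, and the a.e.\ recovery of the initial data from Gauss--Weierstrass convergence plus the $O(t)$ bound on the nonlinear Duhamel term.
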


\begin{proof}
  The linearization of \eqref{eq:u} around $u=0$,
  \begin{align*}
    \partial_{t}u=\partial_{rr}u+\frac{d-1}{r}\partial_{r}u-\frac{d-1}{r^{2}}u,
  \end{align*}
  contains a singular potential term $\frac{d-1}{2r^{2}}$.  As our
  first step we get rid of this singular term by introducing a new
  variable $\Phi(r,t)=u(r,t)/r$.  Consequently, the Cauchy problem
  \eqref{eq:u} in terms of $\Phi$ becomes
  \begin{align}
    \label{eq:Phi}
    \partial_{t}\Phi=\Delta\Phi+\mathcal F(\Phi), \qquad \Phi(r,0)=\Phi_0(r)=\frac{u_0(r)}{r},\qquad r\in \mathbb [0,\infty)
  \end{align}
  with $\Delta$ denoting the radial part of Laplacian on
  $\mathbb R^{d+2}$ and $\mathcal F(\Phi)$ standing for the nonlinear
  part
  \begin{align*}
    \mathcal F(\Phi)&=-\frac{d-1}{2r^{3}}\sin(2r\Phi)+\frac{d-1}{r^{2}}\Phi\\
                    &=4(d-1)\Phi^3\left(\frac{(2r\Phi)-\sin(2r\Phi)}{(2r\Phi)^3}\right).
  \end{align*}
  The last formula reveals that the nonlinear term behaves much better
  than it looks: it is smooth in both $r$ and $\Phi$, it
  is bounded by $\lvert\mathcal F(\Phi)\lvert\lesssim \Phi^{3}$
  independently of $r$ and positive $F(\Phi)>0$ for positive $\Phi$
  (note that we write ``$A\lesssim B$'' if $A\le C B$ with some
  inessential constant $C>0$).

  Slightly abusing the notation, let us extend the notion of $\Phi$ to
  a function on $\Phi (x,t):\mathbb R^{d+2}\times [0,t_{1})$ with
  $x\in \mathbb R^{d+2}$ such that $r=\lvert x\rvert$.  The nonlinear
  term is smooth and therefore locally Lipschitz in $L^{\infty}$ norm,
  so the general results for parabolic equations (see
  e.g. \cite{Lunardi1995} Proposition 7.3.1 or \cite{Lunardi2004}
  Section 6.3) imply that for every $\Phi_{0}\in L^{\infty}$
  (equivalently for every $u_{0}$ such that $u_{0}(r)/r$ is bounded)
  there is $t_{1}>0$ such that a unique mild solution to
  \eqref{eq:Phi} exists for $t\in(0,t_{1})$ with $\Phi$,
  $\partial_{t}\Phi$, $\nabla_{x}\Phi$, $\Delta\Phi$ continuous as
  functions on $\mathbb R^{d+2}\times(0,t_{1})$.  By a mild solution
  we understand an $L^{\infty}$ solution to the integral equation
  \begin{align}
    \label{eq:77}
    \Phi(t,\cdot)=S(t)\Phi_{0}(\cdot)+\int_{0}^{t}S(t-s)\mathcal F(\Phi(s,\cdot))\,ds,\qquad t\in(0,t_{1}),
  \end{align}
  with $S(t):L^{\infty}\to L^{\infty}$ being the heat semigroup
  generated by the Laplacian
  \begin{align*}
    S(t)\psi(x)=\int_{\mathbb R^{d+2}}G(x-y,t)\psi(y)\,dy,\qquad G(x,t)=\frac{1}{(4\pi t)^{(d+2)/2}}e^{-\frac{\lvert x\rvert^{2}}{4t}}.
  \end{align*}
  The smoothness and estimates \eqref{eq:63} follow
  directly from taking derivatives of \eqref{eq:77} and moving them
  under the integral sign followed by taking the $L^{\infty}$ norm of
  the result.

  As for \eqref{eq:64}, we combine a classical result that
  $S(t)\Phi_{0}\to \Phi_{0}$ almost everywhere as $t\to0$ with the
  following bound on the nonlinear term:
  \begin{align*}
    \left\lvert\int_{0}^{t}S(t-s)\mathcal F(\Phi(s,\cdot))\,ds\right\rvert\le\int_{0}^{t}S(t-s)\mathcal (\Phi(s,\cdot))^{3}\,ds\le t\sup_{s\in(0,t_{1})}\lVert\Phi(s,\cdot)\rVert_{\infty}^{3}.
  \end{align*}
  The latter ensures that the nonlinear term tends uniformly to zero as $t\to0$.
\end{proof}

\begin{remark}
  A classical result on second order parabolic equations
  (cf. for instance, \cite{Protter1984}) implies that the comparison
  principle holds for $\Phi$.  In consequence, a comparison principle
  holds also for $u$ and $f$. This will be of great use in
  Section~\ref{sec:proof-main-result}.
\end{remark}

As demonstrated in the previous section the linear stability analysis
of the (singular) stationary solution $f(y,s)=\frac{\pi}{2}$ can be
used to derive the blow-up rate and other quantitative properties of
blow-up solutions.  Therefore it seems natural to analyze the original
problem \eqref{eq:u} using the following similarity variables
\begin{align}
  \label{eq:7bb}
  y=\frac{r}{\sqrt{T-t}},\qquad s=-\log(T-t), \qquad \psi(s)(y)=u(r,t)-\frac{\pi}{2},
\end{align}
so that $\psi(s)=0$ corresponds to $f(y,s)=\frac{\pi}{2}$.  In
self-similar variables \eqref{eq:7bb} equation \eqref{eq:u} becomes
\begin{align}
  \label{eq:65}
  \psi'(s)&=-\mathcal A\psi(s)+F(\psi(s))\\
  F(\phi)(y)&=\frac{d-1}{2y^{2}}(\sin(2\phi(y))-(2\phi(y))),
\end{align}
with the formal operator $\mathcal A$ defined as
\begin{align}
  \label{eq:55}
  -\mathcal A\phi(y)&=\frac{1}{\rho}\frac{d}{dy}\left(\rho \frac{d\phi}{dy}(y)\right)+\frac{d-1}{y^{2}}\phi(y),\\
  \rho&=y^{d-1}e^{-\frac{y^{2}}{4}}.
\end{align}
It remains to restate the formal problem \eqref{eq:65} in a concrete
Banach space (or in this case, Hilbert space).  The Sturm-Liouville
form of $\mathcal A$ suggests to consider the Hilbert space
\begin{align*}
  \mathcal H:=L^{2}([0,\infty),\rho\,dy)=\left\{f\in L^{2}_{loc}\,|\,\int_{0}^{\infty}f(y)^{2}\rho\,dy\right\}.
\end{align*}
with the standard scalar product
\begin{align*}
  \langle f,g\rangle= \int_{0}^{\infty}f(y)g(y)\rho\,dy
\end{align*}
and a norm denoted as
$\lVert\cdot\rVert = \sqrt{\langle\cdot,\cdot\rangle}$.

As we shall see later on, the space $\mathcal H$ was chosen to fulfill
two conditions: the operator $-\mathcal A$ can be represented as a
semi-bounded symmetric operator that has a self-adjoint extension.
To prove these two results we need the following Hardy-type inequality.
\begin{Lem}
  \label{th:hardy}
  For $\phi\in C_{0}^{\infty}(\mathbb R_{+})$ we have
  \begin{align}
    \label{eq:66}
    \int_0^{\infty}\left( \phi^{\prime}(y) \right)^2 \rho dy +
    \left(\alpha^2  - \left(d - 2 \right) \alpha \right) \int_0^{\infty}\frac{\phi (y)^2}{y^2}\rho dy
    \geq -\frac{\alpha}{2} \int_0^{\infty}\phi (y)^2 \rho dy.
  \end{align}
\end{Lem}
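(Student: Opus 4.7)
The inequality has the form of a weighted Hardy estimate, and the natural entry point is the trivial bound obtained by squaring a suitable first-order combination. I would start from
\begin{equation*}
  0 \le \int_{0}^{\infty} \Bigl( \phi'(y) + \frac{\alpha}{y}\phi(y) \Bigr)^{2} \rho\, dy,
\end{equation*}
expand the square, and reduce the cross term by integration by parts, with the goal of producing exactly the three integrals appearing in the statement.

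The only computation that does real work is the derivative $\tfrac{d}{dy}(\rho/y)$. Since $\rho/y = y^{d-2} e^{-y^{2}/4}$, one gets
\begin{equation*}
  \frac{d}{dy}\!\left( \frac{\rho}{y} \right) = \frac{d-2}{y^{2}}\rho - \frac{1}{2}\rho.
\end{equation*}
Writing the cross term as $\int \phi\phi'\rho/y\, dy = \tfrac{1}{2}\int (\phi^{2})'\rho/y\, dy$ and integrating by parts (all boundary terms vanish because $\phi \in C_{0}^{\infty}(\mathbb{R}_{+})$, so both endpoints are avoided), this yields
\begin{equation*}
  2\alpha\int_{0}^{\infty}\frac{\phi\phi'}{y}\rho\, dy
  = -\alpha(d-2)\int_{0}^{\infty}\frac{\phi^{2}}{y^{2}}\rho\, dy
    + \frac{\alpha}{2}\int_{0}^{\infty}\phi^{2}\rho\, dy.
\end{equation*}

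Inserting this back into the expansion of the initial nonnegative integral and moving the $L^{2}(\rho\, dy)$ term to the right-hand side produces the stated inequality exactly; in particular the coefficient $\alpha^{2}-(d-2)\alpha$ in front of $\int \phi^{2}\rho/y^{2}\, dy$ arises by combining $\alpha^{2}$ from the square with $-\alpha(d-2)$ from the integration by parts. I do not anticipate any genuine analytic obstacle: the restriction $\phi\in C_{0}^{\infty}(\mathbb{R}_{+})$ removes any worry about boundary contributions at $y=0$ or $y=\infty$, and the remainder is a short algebraic manipulation. The only mildly creative step is guessing the correct linear combination $\phi' + (\alpha/y)\phi$ to square, which is dictated by wanting the Gaussian-weight part of $(\rho/y)'$ to produce the desired $-\tfrac{\alpha}{2}\int \phi^{2}\rho\, dy$ on the right.
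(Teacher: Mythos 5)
Your proposal is correct and follows essentially the same route as the paper: both square $\phi' + (\alpha/y)\phi$, expand, handle the cross term $2\alpha\int \phi\phi'\rho/y\,dy = \alpha\int (\phi^2)'\rho/y\,dy$ by integration by parts against $\rho/y = y^{d-2}e^{-y^2/4}$, and rearrange. Your key identity $\tfrac{d}{dy}(\rho/y) = \bigl(\tfrac{d-2}{y^2} - \tfrac{1}{2}\bigr)\rho$ and the resulting coefficient bookkeeping match the paper's computation exactly.
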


\begin{proof}
We first notice that
  \begin{align*}
    0\le \left(\phi'+\frac{\alpha\phi}{y}\right)^{2}=(\phi')^{2}+\frac{\alpha}{r}(\phi^{2})'+\frac{\alpha^{2}\phi^{2}}{y^{2}}.
  \end{align*}
  Integration over $[0,\infty)$ with weight $\rho$ gets us to
  \begin{equation*}
    0 \leq
    \int_0^{\infty} \left( \phi^{\prime} \right)^2 \rho dy
    + \alpha \int_0^{\infty} \frac{\left( \phi^{2} \right)^{\prime}}{y} \rho dy
    + \alpha^2 \int_0^{\infty} \frac{\phi^2}{y^2} \rho dy.
  \end{equation*}
  Integration by parts applied to the middle term on the right hand
  side transforms it to
  \begin{equation*}
    \int_0^{\infty} \frac{\left( \phi^{2} \right)^{\prime}}{y} \rho dy
    = -\int_0^{\infty} \left( \frac{d-2}{y^2} - \frac{1}{2} \right) \phi^2 \rho dy.
  \end{equation*}
  Rearranging the terms we arrive at \eqref{eq:66}.
\end{proof}

The inequality \eqref{eq:66} implies that for any
$\phi\in C^{\infty}_{0}(\mathbb R_{+})$ the operator $-\mathcal A$
with $D(\mathcal A):=C^{\infty}_{0}(\mathbb R_{+})$ is bounded from
below.  To see that we rewrite $\langle\mathcal A\phi,\phi\rangle$,
using its Sturm-Liouville form \eqref{eq:55}, as
\begin{align}
  \label{lower0}
  \langle \mathcal{A} \phi, \phi \rangle
  & = \int_0^{\infty} \left( \phi^{\prime}(y) \right)^2 \rho dy - (d-1) \int_0^{\infty} \frac{\phi(y)^2}{y^2} \rho dy \notag \\
  & \geq - \left(\alpha^2  - \left(d - 2\right) \alpha + (d-1) \right) \int_0^{\infty}\frac{\phi(y)^2}{y^2} \rho dy
    - \frac{\alpha}{2} \int_0^{\infty} \phi (y)^2 \rho dy.
\end{align}
The quadratic equation
\begin{align}
  \label{eq:71}
  \alpha^{2}-(d-2)\alpha+(d-1)=0,
\end{align}
has real roots for $d\ge 4+2\sqrt{2}\approx 6.828$, so by defining
$\gamma$ as the smaller root
\begin{align}
  \label{eq:def-gamma}
  \gamma=\frac{1}{2}(d-2-\omega),\qquad \omega=\sqrt{d^{2}-8d+8}
\end{align}
and picking $\alpha=\gamma$ we get the the lower bound
\begin{align}
  \label{eq:67}
  \langle \mathcal A\psi,\psi\rangle\ge -\frac{\gamma}{2} \lVert \psi\rVert^{2}.
\end{align}
Moreover, it is easy to see that $\mathcal A$ is symmetric on
$C_{0}^{\infty}(\mathbb R_{+})$, so by a standard result on symmetric
semi-bounded operators it admits a unique Friedrichs extension to a
self-adjoint operator (see e.g. \cite{Reed2}, Theorem X.23).  It is
also routine to compute the eigenvectors and eigenvalues of this
Friedrichs extension.  We summarize these results in
Lemma~\ref{th:extension}.

\begin{Lem}
  \label{th:extension}
  For $d>4+2\sqrt{2}$ the operator $\mathcal A$ with domain
  $D(\mathcal A):=C^{\infty}_{0}(\mathbb R_{+})$ can be extended to a
  semibounded self-adjoint operator $A$ with domain $D(A)$.  The
  operator $A$ satisfies the same lower bound as $\mathcal A$ and the
  spectrum of $A$ consists of countably many simple eigenvalues.  These
  eigenvalues and eigenvectors are given by
  \begin{align}
    \label{eq:25}
    \lambda_{n}=-\frac{\gamma}{2}+n,\qquad \phi_{n}(y)=\mathcal N_{n}y^{-\gamma}L^{(\omega/2)}_{n}\left(\frac{y^{2}}{4}\right), \qquad n\ge0,
  \end{align}
  where $L^{(\alpha)}_{n}$ denotes the generalized Laguerre polynomial
  of order $n$.  The normalization constant
  \begin{align*}
    \mathcal N_{n}=\sqrt{\frac{\Gamma(1+n)}{\Gamma(1+n+\omega/2)}}
  \end{align*}
  ensures that $\lVert\phi_{n}\rVert=1$, while the asymptotic
  expansion of $L^{(\alpha)}_{n}$ yields the following behavior for
  $\phi_{n}$ at $0$ and $\infty$, respectively:
  \begin{subequations}
    \label{eq:eigen}
    \begin{align}
      \label{eq:eigen0}
      \phi_{n}(y)&=\alpha_{n}y^{-\gamma}+\mathcal O(y^{-\gamma+2}),\\
      \label{eq:eigen1}
      \phi_{n}(y)&=\beta_{n}y^{2\lambda_{l}}+\mathcal O(y^{2\lambda_{l}-2})=\beta_{n}y^{-\gamma}y^{2l}+\mathcal O(y^{2\lambda_{l}-2}).
    \end{align}
  \end{subequations}
  The coefficients $\alpha_{n}$ and $\beta_{n}$ are given \cite{nist} by
  \begin{align*}
    \alpha_{n}&=\frac{1}{\Gamma(1+\omega/2)}\sqrt{\frac{\Gamma(1+n+\omega/2)}{\Gamma(1+n)}}\approx n^{\omega/4}\\
    \beta_{n}&=4^{-n}\frac{1}{\sqrt{\Gamma(1+n)\Gamma(1+n+\omega/2)}}\approx \frac{n^{-\omega/4} 4^{-n}}{n!}
  \end{align*}
  with their behavior for large $n$ denoted by '$\approx$'.
\end{Lem}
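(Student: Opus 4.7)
The Friedrichs extension of $\mathcal{A}$ has already been established in the paragraph preceding the lemma; what remains is to compute the eigenvalues, eigenfunctions, normalization, and boundary asymptotics. The plan is to reduce the eigenvalue problem to the classical generalized Laguerre ODE by an explicit change of variables, and then to read off all stated quantities from standard Laguerre polynomial theory.

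First I would perform the substitution $\phi(y)=y^{-\gamma}\psi(y)$ followed by $z=y^{2}/4$, $g(z)=\psi(y)$. A direct computation, in which the singular $1/y^{2}$ potential cancels precisely because $\gamma$ is a root of \eqref{eq:71}, transforms the eigenvalue equation $\mathcal{A}\phi=\lambda\phi$ into the generalized Laguerre equation
\begin{equation*}
  z g''(z)+\left(\frac{\omega}{2}+1-z\right)g'(z)+\left(\lambda+\frac{\gamma}{2}\right)g(z)=0,
\end{equation*}
which admits a polynomial solution precisely when $\lambda+\gamma/2\in\mathbb{N}_{0}$, yielding $g=L_{n}^{(\omega/2)}$ and the eigenpairs $(\lambda_{n},\phi_{n})$ claimed in \eqref{eq:25}.

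The same change of variables also defines a unitary isomorphism
\begin{equation*}
  U\colon\mathcal{H}\longrightarrow\widetilde{\mathcal{H}}:=L^{2}\bigl([0,\infty);\,z^{\omega/2}e^{-z}\,dz\bigr),\qquad (U\phi)(z)=C\,(2\sqrt{z})^{\gamma}\phi(2\sqrt{z}),
\end{equation*}
with $C$ an explicit Jacobian constant coming from $dy=dz/\sqrt{z}$. Classical Laguerre theory says that $\{L_{n}^{(\omega/2)}\}_{n\ge 0}$ is a complete orthogonal basis of $\widetilde{\mathcal{H}}$, so after the normalization \eqref{eq:25} the family $\{\phi_{n}\}_{n\ge 0}$ is a complete orthonormal basis of $\mathcal{H}$. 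Combined with $A\phi_{n}=\lambda_{n}\phi_{n}$, completeness forces the spectrum of $A$ to be precisely $\{\lambda_{n}\}_{n\ge 0}$, each eigenvalue being simple. The constant $\mathcal{N}_{n}$ is then fixed by the orthogonality identity $\int_{0}^{\infty}L_{n}^{(\alpha)}(z)^{2} z^{\alpha}e^{-z}\,dz=\Gamma(n+\alpha+1)/n!$, while the expansions \eqref{eq:eigen0}--\eqref{eq:eigen1} with the stated coefficients $\alpha_{n},\beta_{n}$ follow from $L_{n}^{(\alpha)}(0)=\Gamma(n+\alpha+1)/(\Gamma(\alpha+1)\,n!)$ and the leading monomial $L_{n}^{(\alpha)}(z)=(-z)^{n}/n!+O(z^{n-1})$ as $z\to\infty$, after reinstating $z=y^{2}/4$.

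The only genuinely technical point is confirming that each $\phi_{n}$ really belongs to $D(A)$, so that the distributional eigenvalue relation upgrades to a strong one. The quadratic form applied to $\phi_{n}\sim y^{-\gamma}$ at the origin is finite precisely when $\omega>0$ (equivalent to $d>4+2\sqrt{2}$, hence automatic for $d\ge 7$), and decay at infinity is ensured by the Gaussian factor in $\rho$; alternatively, one may invoke the well-known essential self-adjointness of the Laguerre operator on polynomials in $\widetilde{\mathcal{H}}$ and transport it back through $U$. Once this identification is in place, all remaining claims are routine special-function computations.
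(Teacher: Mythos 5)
The paper itself gives no proof of this lemma: after establishing the Hardy inequality and semi-boundedness, it simply invokes the Friedrichs extension theorem and dismisses the eigenfunction computation as ``routine.'' Your proof fills this gap correctly. The key computation — that the substitution $\phi=y^{-\gamma}\psi$ makes the $y^{-2}$ terms cancel precisely because $\gamma$ solves $\gamma^{2}-(d-2)\gamma+(d-1)=0$, after which $z=y^{2}/4$ yields the Laguerre equation with parameter $\omega/2$ — is exactly the intended argument, and your identification of the weighted $L^{2}$ space $\widetilde{\mathcal H}=L^{2}(z^{\omega/2}e^{-z}\,dz)$ via the Jacobian $d-1-2\gamma=1+\omega$ is right. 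The normalization and the coefficients $\alpha_{n}$, $\beta_{n}$ follow from $L_{n}^{(\alpha)}(0)=\binom{n+\alpha}{n}$ and the leading term $(-1)^{n}z^{n}/n!$ exactly as you state.

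One point deserves a caveat. Your ``alternative'' argument — that one could invoke essential self-adjointness of the Laguerre operator and transport it back through $U$ — is not correct across the full range $d>4+2\sqrt{2}$. The Laguerre operator with parameter $\alpha=\omega/2$ is essentially self-adjoint on $C_{0}^{\infty}(\mathbb R_{+})$ only when $\alpha\ge 1$, i.e.\ $\omega\ge 2$; for $d=7$ one has $\omega=1$, so the origin is a limit-circle endpoint and the minimal operator has a one-parameter family of self-adjoint extensions. In that regime one must genuinely use that the Friedrichs extension is the one whose form domain excludes the more singular indicial behavior $y^{-\gamma_{+}}$ with $\gamma_{+}=(d-2+\omega)/2$ (whose quadratic form $\sim\int_{0}y^{-\omega-1}dy$ diverges), while the less singular $y^{-\gamma}$ (form $\sim\int_{0}y^{\omega-1}dy$, finite for $\omega>0$) is admitted. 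Your main route — checking the quadratic form of $\phi_{n}$ is finite and concluding $\phi_{n}\in D(A)$ — is the sound one, and it is worth making explicit that this is \emph{the} reason the Friedrichs choice of extension singles out the stated eigenfunctions in the borderline case $d=7$.
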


With the Hilbert space and the self-adjoint operator
$A:D(A)\subset \mathcal H\to \mathcal H$ at hand we are ready to
rephrase the problem \eqref{eq:65} as a proper Cauchy problem in
$\mathcal H$
\begin{align}
  \label{eq:68}
  \begin{split}
    \psi'(s)&=-A\psi(s)+F(\psi(s)),\qquad s>s_{0},\\
    \qquad \psi(0)&=\psi_{0}\in \mathcal H.
  \end{split}
\end{align}
(from here on we let $s_{0}=-\log(T)$).  On one hand, the
well-posedness for Cauchy problem \eqref{eq:68} is still open and, in
fact, \eqref{eq:68} might be prone to non-unique solutions
(c.f. non-unique weak solutions found for $2<d<4+2\sqrt{2}$ in
\cite{Biernat2011} or \cite{Germain2010}).  On the other hand, we
already established a different well-posedness result in
Proposition~\ref{th:existence-u}.  In the remainder of this section we show
that a solution $u(r,t)$ from Proposition~\ref{th:existence-u}, translated
to self-similar variables as a function
\begin{align}
  \label{eq:70}
  \psi_{u}(s)(y):=
  \begin{cases}
    u(r,t)-\frac{\pi}{2}& s>s_{0}\\
    u_{0}(r)-\frac{\pi}{2} & s = s_{0},
  \end{cases}
\end{align}
can be regarded a classical solution to \eqref{eq:68} with initial
data $\psi_{0}=\psi_{u}(s_{0})$.  For the proof of this statement and
the definition of a classical solution we refer to
Lemma~\ref{th:classical} below.  Once we establish that $\psi_{u}$ is
indeed a classical solution to \eqref{eq:68} we immediately get the
following corollary (see for example \cite{Lunardi1995} or
\cite{Lunardi2004}).

\begin{Cor}
  \label{th:duhamel}
  Given initial data $\psi_{0}(y)=u_{0}(r)-\frac{\pi}{2}$ with
  $\lvert u_{0}(r)/r\rvert$ bounded, the Cauchy problem \eqref{eq:68}
  has a unique classical solution $\psi_{u}$ defined in
  \eqref{eq:70}.  The classical solution $\psi_{u}$ solves the
  integral equation
  \begin{align}
    \label{eq:69}
    \psi_{u}(s)=e^{-(s-s_{0})A}\psi_{u}(s_{0})+\int_{s_{0}}^{s}e^{-(s-\tau)A}F(\psi_{u}(\tau))\,d\tau,\qquad s\ge s_{0}
  \end{align}
  where $e^{-sA}:\mathcal H\to \mathcal H$ is the
  $C_{0}$-semigroup generated by $-A$ and given explicitly as
  \begin{align*}
    e^{-sA}v=\sum_{n=0}^{\infty}e^{-\lambda_{n}s}\langle v,\phi_{n}\rangle\phi_{n},\qquad s>0
  \end{align*}
  for any $v\in \mathcal H$.
\end{Cor}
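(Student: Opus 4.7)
The plan is to combine three standard ingredients: the explicit construction of the $C_{0}$-semigroup generated by $-A$ via the spectral decomposition, the classical-solution statement for $\psi_{u}$ established in Lemma~\ref{th:classical}, and the variation-of-constants identity for abstract semilinear evolution equations in Hilbert space.

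First I would construct the semigroup explicitly. Lemma~\ref{th:extension} gives that $A$ is self-adjoint and bounded below, with a complete orthonormal basis $\{\phi_{n}\}_{n\ge 0}$ of eigenvectors and eigenvalues $\lambda_{n}=-\gamma/2+n$. The functional calculus then realises the family $\{e^{-sA}\}_{s\ge 0}$ precisely as the stated eigenfunction expansion: convergence in $\mathcal{H}$ for every $v\in\mathcal{H}$ follows from Parseval together with the uniform bound $e^{-\lambda_{n}s}\le e^{\gamma s/2}$, strong continuity as $s\to 0^{+}$ is dominated convergence on the Fourier coefficients of $v$, and the semigroup property is immediate from orthonormality of $\{\phi_{n}\}$. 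This disposes of the last assertion of the corollary.

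Second, once $\psi_{u}$ has been identified in Lemma~\ref{th:classical} as a classical solution of \eqref{eq:68} (meaning $\psi_{u}\in C([s_{0},\infty);\mathcal{H})\cap C^{1}((s_{0},\infty);\mathcal{H})$, with $\psi_{u}(s)\in D(A)$ for $s>s_{0}$ and the ODE holding pointwise in $\mathcal{H}$), the Duhamel formula \eqref{eq:69} follows from the textbook variation-of-constants computation: on $(s_{0},s)$ the map $\tau\mapsto e^{-(s-\tau)A}\psi_{u}(\tau)$ is differentiable in $\mathcal{H}$ with derivative $e^{-(s-\tau)A}F(\psi_{u}(\tau))$, thanks to the product rule and the equation satisfied by $\psi_{u}$; integrating from $s_{0}$ to $s$ produces \eqref{eq:69} (see, e.g., \cite{Lunardi1995}). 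Uniqueness of the classical solution corresponding to a given $u_{0}$ is inherited directly from the uniqueness part of Proposition~\ref{th:existence-u}, since $\psi_{u}$ is entirely determined by $u$ through \eqref{eq:70}; alternatively, in the class of classical solutions of \eqref{eq:68}, uniqueness follows from a Gronwall argument applied to the difference of two candidates in the integral equation \eqref{eq:69}, using local Lipschitz control of $F$ along the trajectories supplied by the estimates \eqref{eq:63}.

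The principal delicate point is not in this corollary itself but in Lemma~\ref{th:classical}: one must verify both that $F(\psi_{u}(s))\in\mathcal{H}$ depends continuously on $s$, so that the Duhamel integral makes sense in $\mathcal{H}$, and that $\psi_{u}(s)\in D(A)$ with $A\psi_{u}(s)$ and $s\mapsto\psi_{u}(s)$ jointly continuous. The potential obstruction is the singular coefficient $1/y^{2}$ that appears in both $A$ and $F$; it is controlled at the origin by the pointwise bounds \eqref{eq:63} from Proposition~\ref{th:existence-u} and at infinity by the Gaussian weight $\rho$. Once those points are addressed inside Lemma~\ref{th:classical}, the present corollary reduces to a clean assembly of the semigroup-theoretic ingredients above.
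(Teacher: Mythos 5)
Your proposal matches the paper's route exactly: the authors give no separate proof of the corollary, but simply point out that once Lemma~\ref{th:classical} identifies $\psi_{u}$ as a classical solution, \eqref{eq:69} follows from the abstract variation-of-constants formula in \cite{Lunardi1995, Lunardi2004}. Your explicit spectral construction of $e^{-sA}$, the differentiability of $\tau\mapsto e^{-(s-\tau)A}\psi_{u}(\tau)$, and uniqueness inherited from Proposition~\ref{th:existence-u} are precisely the details this citation elides.

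One caution on the ``alternative'' Gronwall route to uniqueness you offer at the end. The paper explicitly notes just above the corollary that well-posedness of \eqref{eq:68} in $\mathcal H$ is \emph{open}, and the nonlinearity $F(\phi)\sim y^{-2}\phi^{3}$ is neither everywhere defined on $\mathcal H$ nor locally Lipschitz as a map $\mathcal H\to\mathcal H$: there is no cubic embedding in the weighted $L^{2}$ space that would absorb the $y^{-2}$ singularity. The pointwise bounds \eqref{eq:63} that would supply the needed Lipschitz control along the trajectory are obtained from the PDE for $u$, so invoking them for \emph{both} candidates in a Gronwall comparison tacitly presupposes that both arise from Proposition~\ref{th:existence-u} --- at which point the argument collapses to your first one. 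The corollary's ``unique classical solution $\psi_{u}$'' should therefore be read as the unique function \eqref{eq:70} attached to the unique $u$, not as uniqueness in the full class of $\mathcal H$-valued classical solutions of \eqref{eq:68}.
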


So, although we do not prove the well-posedness of \eqref{eq:68} we
can still employ the formula \eqref{eq:69}. There are several benefits
that we gain by reformulating \eqref{eq:u} as \eqref{eq:69}.  First of
all, we gain the notion of continuity in $\mathcal H$ starting from
$s=s_{0}$, which corresponds to $t=0$.  This is an improvement
compared to $u$ being continuous in $L^{\infty}$ only for $t>0$, in
other words we have $u\in C((0,t_{1}],L^{\infty})$ but
$\psi_{u}\in C([s_{0},s_{1}],\mathcal H)$.  Secondly, we are able to
explicitly use eigenvalues of $A$ in a priori estimates on
$\psi_{u}$.  And finally, the heat kernel associated to the semigroup
$e^{-sA}$ is known in an explicit form and its action can be bounded
by maximal functions (cf. Lemma~\ref{estim-semigroup} further on).
We devote the rest of this section to proving that \eqref{eq:70} is,
in fact, a classical solution to \eqref{eq:68}.

\begin{Lem}
  \label{th:classical}
  The function $\psi_{u}$ defined in \eqref{eq:68} is a classical solution to
  \eqref{eq:68}, that is
  \begin{align}
    \label{eq:72}
    \psi_{u}\in C([s_{0},s_{1}],\mathcal H)\cap C((s_{0},s_{1}],D(A))\cap
    C^{1}((s_{0},s_{1}],\mathcal H)
  \end{align}
  (with $s_{1}=-\log(T-t_{1})$) and
  \begin{align}
    \label{eq:75}
    \left(s\to F(\psi_{u}(s))\right)\in C([s_{0},s_{1}],\mathcal H)\cap L^{1}([s_{0},s_{1}],\mathcal H)
  \end{align}
  and finally $\psi_{u}$ solves
  $\psi_{u}'(s)=-A\psi_{u}(s)+F(\psi_{u}(s))$ for $s>s_{0}$.
\end{Lem}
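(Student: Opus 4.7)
The plan is to transport the pointwise regularity of $u$ from Proposition~\ref{th:existence-u} into the Hilbert-space framework via the change of variables \eqref{eq:7bb}, and then to verify in turn the continuity in $\mathcal H$, the values in $D(A)$, the $C^1$ regularity in $\mathcal H$, and the differential identity. The key quantitative input is the order-zero case of \eqref{eq:63}, $|u(r,t)|\le Cr$ on $[0,\infty)\times[0,t_{1}]$, which after substituting $r=ye^{-s/2}$ yields the uniform envelope
\begin{equation*}
|\psi_{u}(s)(y)|\le Cy+\tfrac{\pi}{2},\qquad s\in[s_{0},s_{1}],\ y\ge0.
\end{equation*}
Because $\rho$ carries a Gaussian factor, the square of this envelope lies in $L^{1}(\rho\,dy)$, so $\psi_{u}(s)\in\mathcal H$ with $\|\psi_{u}(s)\|$ uniformly bounded on $[s_{0},s_{1}]$. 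Analogous bounds for $\partial_{y}^{n}\partial_{s}^{k}\psi_{u}$ on compact sub-intervals of $(s_{0},s_{1}]$ follow from the higher-order estimates \eqref{eq:63} and the chain rule.

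Continuity and differentiability of $\psi_{u}$ on $(s_{0},s_{1}]$ in $\mathcal H$ are then straightforward applications of dominated convergence to the smooth family $\psi_{u}(\cdot)(y)$, using those envelopes. The differential identity comes from the chain rule
\begin{equation*}
\partial_{s}[u(ye^{-s/2},T-e^{-s})]=e^{-s}\partial_{t}u-\tfrac{y}{2}e^{-s/2}\partial_{r}u
\end{equation*}
and substitution of the PDE \eqref{eq:u}, which yields $\psi_{u}'(s)=-\mathcal A\psi_{u}(s)+F(\psi_{u}(s))$ pointwise. The individual $y^{-2}$ singularities of $\mathcal A\psi_{u}$ and $F(\psi_{u})$ at the origin are integrable against $\rho=y^{d-1}e^{-y^{2}/4}$ for $d\ge5$, so each term lies in $\mathcal H$; combined with the boundary behavior $\psi_{u}(s)+\pi/2=O(y)$ as $y\to0$ (inherited from $u=O(r)$), this places $\psi_{u}(s)$ in the Friedrichs form domain and identifies $A\psi_{u}=\mathcal A\psi_{u}$. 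Continuity and integrability of $s\mapsto F(\psi_{u}(s))$ follow from the same dominated-convergence argument, using the uniform bound $|F(\psi_{u}(s))(y)|^{2}\rho(y)\lesssim y^{d-5}$ near the origin.

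The main obstacle is continuity at $s_{0}$, where the change of variables \eqref{eq:7bb} simultaneously scales the radial argument $r_{s}:=ye^{-s/2}\to ye^{-s_{0}/2}$ and sends the time argument $t_{s}:=T-e^{-s}\to0$, so the a.e.\ convergence statement \eqref{eq:64} does not apply directly. To handle this I would leverage the mild formulation \eqref{eq:77}: writing $\Phi(t,\cdot)=S(t)\Phi_{0}+R(t,\cdot)$ with $\|R(t,\cdot)\|_{\infty}\le Ct$ (a uniform consequence of the bound $|\mathcal F(\Phi)|\lesssim\Phi^{3}$ from the proof of Proposition~\ref{th:existence-u}), the Lebesgue-point theory for the heat kernel on $\mathbb R^{d+2}$ gives $S(t_{s})\Phi_{0}(r_{s})\to\Phi_{0}(r_{s_{0}})$ for a.e.\ $y$, while the remainder $R(t_{s},r_{s})\to0$ uniformly. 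Hence $u(r_{s},t_{s})\to u_{0}(r_{s_{0}})$ pointwise a.e.\ in $y$, and the envelope above lets dominated convergence conclude $\|\psi_{u}(s)-\psi_{u}(s_{0})\|_{\mathcal H}\to0$, extending $\psi_{u}$ to a continuous $\mathcal H$-valued map on all of $[s_{0},s_{1}]$.
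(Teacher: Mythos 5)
Your proposal follows essentially the same path as the paper: extract pointwise envelopes from \eqref{eq:63} (linear growth in $y$ for $\psi_{u}$ and its derivatives, polynomially bounded singularities for $\mathcal A\psi_{u}$ and $F(\psi_{u})$ integrable against the Gaussian weight $\rho$), apply dominated convergence for continuity on $(s_{0},s_{1}]$, and obtain the abstract ODE by substituting \eqref{eq:u} into the chain-rule identity. The arithmetic checks out: $|A\psi_{u}|^{2}\rho\lesssim y^{d-5}$ near the origin is integrable for $d\ge7$, and your computation of the leading behaviour of $F(\psi_{u})$ near $y=0$ (driven by the $\pi$ offset, not by the $O(y)$ correction) gives the same exponent.

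The one genuine difference is your treatment of continuity at $s=s_{0}$, and it is a real refinement of the paper's argument. The paper passes from the fixed-$r$ statement \eqref{eq:64}, $\lim_{t\to0+}u(r,t)=u_{0}(r)$ a.e., directly to $\psi_{u}(s)(y)\to\psi_{u}(s_{0})(y)$ a.e., but under \eqref{eq:7bb} the radial argument $r_{s}=ye^{-s/2}$ moves simultaneously with $t_{s}=T-e^{-s}$, so the needed limit is a diagonal one that does not formally follow from \eqref{eq:64}. Your fix is the right one: return to the mild formulation \eqref{eq:77}, absorb the Duhamel remainder uniformly ($\|R(t,\cdot)\|_{\infty}\lesssim t$ from $|\mathcal F(\Phi)|\lesssim\Phi^{3}$), and invoke non-tangential (here in fact sub-parabolic, since $|r_{s}-r_{s_{0}}|\lesssim t_{s}\ll\sqrt{t_{s}}$) Lebesgue-point convergence of the heat semigroup to handle the moving argument. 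This buys a cleaner and fully justified continuity statement at $s_{0}$ at the cost of one extra appeal to the a.e.\ boundary behaviour of $S(t)$; the paper's shorter argument implicitly assumes exactly this non-tangential convergence without naming it. Everything else in your write-up matches the paper's proof.
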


\begin{proof}
  We already showed that $u$ is smooth and solves \eqref{eq:u} thus,
  by construction, $\psi_{u}$ solves
  $\psi_{u}'(s)=-A\psi_{u}(s)+F(\psi_{u}(s))$ for $s>s_{0}$, so it
  remains to confirm the appropriate continuity conditions
  \eqref{eq:72} and \eqref{eq:75}.

  To this end, we note that \eqref{eq:63} from
  Proposition~\ref{th:existence-u} implies that for any fixed $t>0$ all
  derivatives of $u$ are bounded near the origin and have at most
  linear growth for large $r$
  \begin{align*}
    \lvert\partial_{t}^{(n)}\partial_{r}^{(k)}u(r,t)\rvert\lesssim 1+r,\qquad t>0.
  \end{align*}
  These bounds carry over to bounds on derivatives of $\psi_{u}$ for fixed $s>s_{0}$
  (possibly with different $\beta_{n,k}$)
  \begin{align*}
    \lvert\psi_{u}(s)(y)\rvert&\lesssim 1+y,\qquad&\lvert\partial_{y}\psi_{u}(s)(y)\rvert&\lesssim 1+y,\\
    \lvert\partial_{yy}\psi_{u}(s)(y)\rvert&\lesssim 1+y,\qquad&\lvert\partial_{s}\psi_{u}(s)(y)\rvert&\lesssim 1+y^{2},
  \end{align*}
  (the additional power for the time derivative $\partial_{s}$ comes
  from the last term in
  $\partial_{s}=e^{-s}\partial_{t}+\frac{1}{2}r\partial_{r}$).  Now,
  the action of $A$ on $\psi_{u}(s)$ can be bounded by
  \begin{align*}
    \lvert A\psi_{u}(s)(y)\rvert&=\left\lvert\partial_{yy}\psi(s)(y)+\left(\frac{d-1}{y}-\frac{y}{2}\right)\partial_{y}\psi_{u}(s)(y)+\frac{d-1}{y^{2}}\psi_{u}(s)(y)\right\rvert\\
    &\lesssim y^{-2}+y^{2}.
  \end{align*}
  From the above we deduce that the norm
  \begin{align*}
    \lVert A\psi_{u}(s)\rVert^{2}\lesssim\int_{0}^{\infty}\left(y^{-2}+y^{2}\right)^{2}y^{d-1}e^{-\frac{y^{2}}{4}}\,dy
  \end{align*}
  is finite, thus $\psi_{u}(s)\in D(A)\subset \mathcal H$ for $s>s_{0}$.  In a
  completely analogous fashion one can show that
  $\psi_{u}'(s),F(\psi_{u}(s))\in \mathcal H$ for $s>s_{0}$.
  At this point the continuity for $s>s_{0}$ follows from smoothness
  of $\psi_{u}(s)(y)$ in $s$ and $y$ for $s>s_{0}$ and the dominated
  convergence theorem.

  We therefore established that \eqref{eq:72} and \eqref{eq:75} hold
  on the interval $(s_{0},s_{1}]$.  To finalize the proof we note that
  by \eqref{eq:64} $u(r,t)\to u_{0}(r)$ for $t\to0$ almost everywhere,
  thus $\psi_{u}(s)(y)\to\psi_{u}(s_{0})$ almost everywhere when
  $s\to s_{0}$.  So it suffices to show that $\psi_{u}(s_{0})$ and
  $F(\psi_{u}(s_{0}))$ are in $\mathcal H$ and apply the dominated
  convergence theorem.  Indeed, for $\lvert u_{0}(r)/r\rvert$ bounded
  we have $\lvert\psi_{u}(s_{0})\rvert\lesssim 1+y$, which implies
  $\psi_{u}(s_{0}),F(\psi_{u}(s_{0}))\in\mathcal H$ and the continuity
  at $s_{0}$ follows.
\end{proof}

\section{Proof of the main result}
\label{sec:proof-main-result}
In this section we construct a solution to \eqref{eq:u} that blows up
at time $t=T$, or equivalently a solution to \eqref{eq:68} defined for
all times $s\ge s_{0}$.  This construction is based on the matched
asymptotics method used to construct a formal solution and requires
the approximations derived in
Section~\ref{sec:derivation-blow-up}.  The basic idea is to take
initial data that is already close to the formal solution and fine
tune it using a finite number of parameters so that it stays close to
the anticipated formal solution for as long as we like.  We use the
initial time $s_{0}$ as a ``bettering'' parameter, that is by taking
$s_{0}$ large enough our initial data gets closer to the formal
solution taken at time $s_{0}$; let us also remind that
$s_{0}=-\log(T)$, so taking $s_{0}$ large corresponds to a small
blow-up time $T$.

First, let us remind that the inner part of the approximation was
based on the solution $U_{1}$ to the ordinary differential equation
\begin{align}
  \label{eq:73}
  U''(r)+\frac{d-1}{r}U'(r)-\frac{d-1}{2r^{2}}\sin(2U)=0,\qquad U(0)=0
\end{align}
with initial condition $U_{1}'(0)=1$; naturally $u(r,t)=U_{\alpha}(r)$
is a stationary solution to \eqref{eq:u}.  Thanks to the scaling
symmetry $r\to\lambda r$ of \eqref{eq:73}, $U_{1}$ gives rise to
solutions $U_{\alpha}$ for all $\alpha>0$ by simply defining
$U_{\alpha}(r):=U_{1}(\alpha r)$.  Lemma~\ref{th:stationary} below
establishes the basic properties of $U_{\alpha}$.  We do not prove it
here but the proof can be found in the Appendix of \cite{Biernat2014}
and it consists of the phase portrait analysis of the autonomous
equation $v''(x)+(d-2)v'(x)+(d-2)\sin(v(x))=0$ arising after changing
variables to $x=\log(r)$ and $v(x)=U(r)$.

\begin{Lem}
  \label{th:stationary}
  For $d>4+2\sqrt{2}$, there exists a family of solutions
  $U_\alpha(r)=U_{1}(\alpha r)$, (with $\alpha > 0$) to equation
  \eqref{eq:73} such that
  \begin{align*}
    U_\alpha(0)=0,\quad U_\alpha'(0)=\alpha.
  \end{align*}
  Moreover $U_{\alpha}$ is monotone
  \begin{align}
    \label{eq:U-monotone}
    U_{\alpha}'(r)>0,\qquad r>0
  \end{align}
  and its asymptotic behavior for large $r$ is
  \begin{align}
    \label{G4}
    U_\alpha(r)=\frac{\pi}{2}-h\,\alpha^{\gamma}r^{-\gamma}+\mathcal O(r^{-\gamma-2}),
  \end{align}
  where $h$ is a strictly positive constant depending only on $d$ and
  $\gamma$ is the constant given by \eqref{eq:def-gamma}.
\end{Lem}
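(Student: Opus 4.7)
The plan is to reduce the second-order ODE \eqref{eq:73} to a planar autonomous system by introducing $x=\log r$ and $v(x)=U(r)$, thereby converting a singular boundary-value problem to a phase-plane problem whose equilibria encode the desired quantitative information. First, I would dispose of the scaling claim: a direct computation shows that \eqref{eq:73} is invariant under $r\mapsto \lambda r$, so if $U_1$ exists with $U_1(0)=0$ and $U_1'(0)=1$, then $U_\alpha(r):=U_1(\alpha r)$ solves the same equation with $U_\alpha'(0)=\alpha$. Hence it suffices to construct $U_1$.

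Local existence of $U_1$ on $[0,r_*]$ for some $r_*>0$ is not immediate because $r=0$ is a singular point of the ODE, but the substitution $\Phi(r)=U(r)/r$ (as in the proof of Proposition~\ref{th:existence-u}) turns \eqref{eq:73} into a regular ODE with smooth nonlinearity near $\Phi=1$, so standard Picard iteration gives a unique smooth solution with $\Phi(0)=1$, $\Phi'(0)=0$. The change $x=\log r$ then transforms the equation on $r\in(0,\infty)$ into an autonomous second order ODE of the form
\begin{equation*}
v'' + (d-2)v' - \tfrac{d-1}{2}\sin(2v)=0,
\end{equation*}
with equilibria on $v'=0$ at $v=k\pi/2$. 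Linearization at $v=0$ produces characteristic roots $1$ and $-(d-1)$, so $(0,0)$ is a hyperbolic saddle; the initial condition $U(r)\sim r$ as $r\to 0^+$ selects precisely the branch of the one-dimensional unstable manifold with $v>0$, $v'>0$. Linearization at $v=\pi/2$ yields the characteristic equation $\mu^2+(d-2)\mu+(d-1)=0$ with roots $-\gamma$ and $-(d-2-\gamma)$; the hypothesis $d>4+2\sqrt{2}$ is exactly what makes the discriminant positive, so $(\pi/2,0)$ is a stable node with distinct negative real eigenvalues.

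The core of the proof is then the global phase-plane argument: show that the unstable-manifold trajectory emanating from $(0,0)$ enters the open rectangle $\mathcal R:=\{0<v<\pi/2,\,v'>0\}$ and converges to the stable node at $(\pi/2,0)$ without ever leaving $\mathcal R$. The natural tool is the Lyapunov functional
\begin{equation*}
E(v,v')=\tfrac{1}{2}(v')^2-\tfrac{d-1}{4}\bigl(1-\cos(2v)\bigr),
\end{equation*}
whose derivative along solutions is $-(d-2)(v')^2\le 0$. This monotone dissipation rules out periodic orbits in $\mathcal R$, so the $\omega$-limit set must consist of equilibria; since $\mathcal R$ is forward-invariant (if $v'$ tried to vanish at some $x_0$ with $v(x_0)\in(0,\pi/2)$, monotonicity of $E$ together with $v''(x_0)=\tfrac{d-1}{2}\sin(2v(x_0))>0$ contradicts the would-be turning point), the trajectory stays trapped and converges to the unique equilibrium $(\pi/2,0)$ in $\overline{\mathcal R}$. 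This simultaneously delivers the monotonicity \eqref{eq:U-monotone}.

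Finally, the asymptotic expansion \eqref{G4} comes from linear analysis at the stable node. Writing $w(x)=\pi/2-v(x)$, one has $w''+(d-2)w'+(d-1)w=O(w^3)$, and the stable manifold theorem (or a direct invariant-manifold construction) gives $w(x)=h e^{-\gamma x}+O(e^{-\gamma x-2x})$ unless the trajectory enters along the fast direction $e^{-(d-2-\gamma)x}$, which a straightforward shooting/continuity argument rules out for the unstable-manifold trajectory from $(0,0)$. Undoing $x=\log r$ yields $U_1(r)=\pi/2-h\,r^{-\gamma}+\mathcal O(r^{-\gamma-2})$, and positivity of $h$ is forced by $U_1<\pi/2$ on $(0,\infty)$ (itself a consequence of the trajectory lying in $\mathcal R$). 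The main obstacle throughout is the global phase-plane step: verifying forward invariance of $\mathcal R$ and ruling out approach along the fast stable direction; the monotonicity of $E$ together with the sign of $\sin(2v)$ on $(0,\pi/2)$ is the key mechanism, and the positivity of $h$ is what justifies identifying $\gamma$ (and not $d-2-\gamma$) as the physically relevant decay exponent.
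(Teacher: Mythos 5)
Your overall strategy---transform via $x=\log r$, $v(x)=U(r)$ to the autonomous equation $v''+(d-2)v'-\tfrac{d-1}{2}\sin(2v)=0$, identify $(0,0)$ as a hyperbolic saddle and $(\pi/2,0)$ as a stable node (requiring $d>4+2\sqrt{2}$), and trace the unstable manifold from the origin into the node---is exactly the route the paper indicates (it defers the proof to the appendix of \cite{Biernat2014}, describing it as ``phase portrait analysis of the autonomous equation arising after changing variables to $x=\log r$''). The local linearizations, the eigenvalues $1,-(d-1)$ at the saddle and $-\gamma,-(d-2-\gamma)$ at the node, and the Lyapunov functional $E=\tfrac{1}{2}(v')^2-\tfrac{d-1}{4}(1-\cos 2v)$ with $E'=-(d-2)(v')^2$ are all correct and on target.

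There is, however, a genuine gap in the heart of the argument: your forward-invariance claim for $\mathcal R=\{0<v<\pi/2,\ v'>0\}$ only rules out exit through the bottom $\{v'=0\}$. Exit through the right side $\{v=\pi/2,\ v'>0\}$ is a flow-out boundary: there $\dot v=v'>0$, and nothing in your argument prevents the trajectory from crossing into $v>\pi/2$. Indeed, this is precisely what \emph{does} happen for $3\le d\le 6$ (the spiral regime, producing the self-similar solutions mentioned in the introduction), and the Lyapunov decay $E'\le 0$ alone cannot distinguish the cases because it holds in every dimension. On $v=\pi/2$ the constraint $E<0$ only gives $v'<\sqrt{d-1}$, which does not prevent crossing. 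The ingredient that actually exploits $d>4+2\sqrt{2}$ is a \emph{sharper} barrier, for instance the line through the node along the slow eigendirection: using $\sin(2v)\le 2(\pi/2-v)$ on $(0,\pi/2)$ together with $\gamma^2-(d-2)\gamma+(d-1)=0$, one checks that the region $\{0<v<\pi/2,\ 0<v'<\gamma(\pi/2-v)\}$ is forward invariant and captures the unstable manifold. This barrier also does double duty: it gives $w'> -\gamma w$ for $w=\pi/2-v$, which integrates to $w(x)\gtrsim e^{-\gamma x}$ and hence forces $h>0$, replacing the vague ``shooting/continuity argument'' you invoke to rule out entry along the fast eigendirection. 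Without some such refined invariant region, the global trapping step---and with it the monotonicity \eqref{eq:U-monotone} and the positivity of $h$ in \eqref{G4}---is unproven.
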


Now let us fix $\alpha$ to be
\begin{align*}
  \alpha:=\left(\frac{\alpha_{l}}{h}\right)^{\frac{1}{\gamma}}
\end{align*}
and define the following constants
\begin{align*}
  0<\widetilde k<k<1,\qquad 0<\widetilde\sigma<\sigma<\frac{1}{2},\\
  \omega_{l}:=\frac{\lambda_{l}}{\gamma},\qquad K:=e^{k\omega_{l}s_{0}},\qquad \widetilde K:=e^{\widetilde k\omega_{l}s_{0}}
\end{align*}
(note that $K\gg \widetilde K\gg 1$ for $s_{0}\gg 1$).  The constants
$k$, $\sigma$ and $\widetilde\sigma$ will be further restricted in
Lemma~\ref{initial-data-long-time},
Lemma~\ref{lem:estimate-short-inhomogeneous} and
Lemma~\ref{lem:estimate-short-inhomogeneous-extension} but the upshot
is that the constant $k$ should be taken close to $1$, while $\sigma$
and $\widetilde\sigma$ should be chosen close to $0$.  We can now take
$q=(q_{0},q_{2},\dots,q_{l-1})\in\mathbb R^{l}$ and a function
$\psi_{0,q}$, which will serve as the initial data, such that
\begin{subequations}
  \label{eq:20}
  \begin{align}
    \label{eq:14}
    \psi_{0,q}(y)&=U_{\alpha}(ye^{\omega_{l}s_{0}})-\frac{\pi}{2}&y&\in [0,\widetilde K e^{-\omega_{l}s_{0}})\\
    \psi_{0,q}(y)&=\sum_{n=0}^{l-1}q_{n}\phi_{n}(y)-e^{-\lambda_{l}s_{0}}\phi_{l}(y) & y&\in [\widetilde K e^{-\omega_{l}s_{0}},e^{\widetilde\sigma s_{0}})\\
    \label{eq:21}
    \lvert \psi_{0,q}(y)\rvert&\le \frac{\pi}{2}& y&\in [e^{\widetilde\sigma s_{0}},\infty).
  \end{align}
\end{subequations}
One can immediately notice that with $q=0$ the second part of the
definition \eqref{eq:20} corresponds to the formal approximation we
made in \eqref{eq:74} with $a_{l}(0)=-1$, while the first part is just
\eqref{eq:78} with $\varepsilon(s)$ coming from \eqref{eq:eps} (the
two parts meet divided at $y=\widetilde Ke^{-\omega_{l}s}\ll 1$).  The
parameters $q$ in \eqref{eq:20} stand in front of the modes of
lower-order than $\phi_{l}$.  The presence of the third term
\eqref{eq:21} is due to the polynomial growth of $\phi_{n}$ at
infinity, which we have to cut off at some point to produce a bounded
solution, that is $u(r,t)$ enclosed in a strip $0\le u(r,t)\le \pi$.

As a matter of fact, function $\psi_{0,q}$ is discontinuous but it is easy to see that the
associated
$u_{0,q}(r):=\psi_{0,q}(y)+\frac{\pi}{2}=\psi_{0,q}(e^{-s_{0}/2}r)+\frac{\pi}{2}$
fulfills the assumptions of Corollary~\ref{th:duhamel}, thus a
solution to \eqref{eq:68} with initial data $\psi_{0,q}$ exists for
some short time $s_{1}>s_{0}$.  From now on, we shall refer to a
solution to \eqref{eq:68} with initial data $\psi_{0,q}$ as
$\psi_{q}$.

Let us now define a property of solution $\psi_{q}$, which serves as
the basis of our topological argument.  We say that $\psi_{q}$ has the
property $\mathcal W^{\theta}_{ s_{0}, s_{1}}$ (or
$\psi_{q}\in \mathcal W^{\theta}_{ s_{0}, s_{1}}$ for short) if
the following estimate holds for $ s_{0}\le  s\le  s_{1}$
\begin{align}
  \label{eq:58}
  \lvert \psi_{q}(s)(y)+e^{-\lambda_{l} s}\phi_{l}(y)\rvert&< \theta \eta e^{-\lambda_{l} s}(y^{-\gamma}+y^{2\lambda_{l}})&& \text{ for } Ke^{-\omega_{l} s} \le y\le e^{\sigma s}
\end{align}
The parameter $\eta\in(0,\alpha_{l})$ should be treated as a fixed
number, chosen for the remainder of this paper according to
Lemma~\ref{lem:intermediate-implications} below.
Basing on the definition of $\mathcal W^{\theta}_{ s_{0}, s_{1}}$ we
define the following subset of $\mathbb R^{l}$
\begin{align}
  \label{eq:56}
  \mathcal U_{ s_{0}, s_{1}}:=\left\{q\in \mathbb R^{l}\,:\, \psi_{q}\in \mathcal W^{1}_{ s_{0}, s_{1}}\right\}\cap B_{e^{-\lambda_{l} s_{0}}}(0).
\end{align}
It turns out that having $q\in\mathcal U_{ s_{0}, s_{1}}$ provides
sufficient information to ensure that the solution $\psi_{q}$ is close to
a re-scaled stationary solution near the origin and stays bounded in
the external region $y>e^{\sigma s}$ in the sense of the following
Lemma.

\begin{Lem}
  \label{lem:intermediate-implications}
  Take $\delta\in(0,1)$ and $\eta=\eta(\delta)>0$ small enough.  Then
  for any $q\in\mathcal U_{ s_{0}, s_{1}}$ and
  $ s_{0}\le  s\le s_{1}$ we have
  \begin{subequations}
    \begin{align}
      \label{hyp-inn}
      U_{\alpha\delta}(ye^{\omega_{l} s})-\frac{\pi}{2}< &\psi_{q}(s)(y)<U_{\alpha/\delta}(ye^{\omega_{l} s})-\frac{\pi}{2}&y&\in[0,Ke^{-\omega_{l} s})\\
      \label{hyp-inter}
      \lvert \psi_{q}(s)(y)+e^{-\lambda_{l} s}\phi_{l}(y)\rvert &< \eta e^{-\lambda_{l} s}(y^{-\gamma}+y^{2\lambda_{l}})& y&\in[Ke^{-\omega_{l} s},e^{\sigma s})\\
      \label{hyp-out}
      \lvert \psi_{q}(s)(y)\rvert&\le\frac{\pi}{2} &y&\in[e^{\sigma s,\infty}),
    \end{align}
  \end{subequations}
  provided that $ s_{0}$ is large enough.
\end{Lem}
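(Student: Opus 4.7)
The lemma splits into three claims, one per displayed inequality, and I would treat them in ascending order of difficulty. The intermediate estimate \eqref{hyp-inter} requires no proof: it is literally the condition $\psi_q \in \mathcal W^1_{s_0, s_1}$ embedded in the hypothesis $q \in \mathcal U_{s_0, s_1}$ via \eqref{eq:58}. The outer estimate \eqref{hyp-out} follows from the comparison principle for \eqref{eq:u} (remark after Proposition~\ref{th:existence-u}): after verifying $0 \le u_{0,q} \le \pi$ for the piecewise initial data \eqref{eq:20} --- on the innermost interval by monotonicity of $U_\alpha$ (Lemma~\ref{th:stationary}), on the outermost by \eqref{eq:21}, and on the intermediate sliver using $|q_n| < e^{-\lambda_l s_0}$ together with polynomial bounds on $\phi_n$ from Lemma~\ref{th:extension} and the choice $\widetilde\sigma < 1/2$, so that each eigenmode contributes $o(1)$ as $s_0 \to \infty$ --- the fact that $u \equiv 0$ and $u \equiv \pi$ are stationary solutions gives $0 \le u(r,t) \le \pi$ for all $(r, t)$, whence $|\psi_q(s)(y)| \le \pi/2$ globally.

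The inner estimate \eqref{hyp-inn} is the real work. I would pass to $(r, t)$ variables via $r = y\sqrt{T-t}$, so that $U_{\alpha\delta^{\mp 1}}(y e^{\omega_l s}) = U_{B_\pm(t)}(r)$ with $B_\pm(t) := \alpha\delta^{\mp 1}(T-t)^{-l/\gamma}$, and the claim reads $U_{B_-(t)}(r) < u(r, t) < U_{B_+(t)}(r)$ on the shrinking parabolic cylinder $r \in [0, K(T-t)^{l/\gamma})$. Since each $U_B$ is a stationary solution of \eqref{eq:u} and both $\dot B_\pm > 0$, one checks directly that $U_{B_+(t)}(r)$ and $U_{B_-(t)}(r)$ are \emph{strict supersolutions}, with the parabolic defect given by $r\,\dot B_\pm(t)\, U_1'(B_\pm(t)\, r) > 0$. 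The upper bound $u < U_{B_+}$ then follows by standard parabolic comparison: at $r = 0$ both sides vanish; at the lateral boundary $r = r_b(t) := K(T-t)^{l/\gamma}$ one has $U_{B_+(t)}(r_b(t)) = U_{\alpha/\delta}(K)$, a constant, while \eqref{hyp-inter} combined with the small-$y$ expansion $\phi_l(y) \sim \alpha_l y^{-\gamma}$ (Lemma~\ref{th:extension}) gives $u(r_b(t), t) = \pi/2 - \alpha_l K^{-\gamma} + O(\eta K^{-\gamma})$, yielding a strict gap once $\eta$ is chosen small depending on $\delta$; at $t = 0$ the inner ansatz \eqref{eq:14} reads $u_{0,q}(r) = U_{\alpha T^{-l/\gamma}}(r) < U_{B_+(0)}(r)$ by monotonicity of $\beta \mapsto U_\beta$.

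The main obstacle is the \emph{lower} bound $u > U_{B_-}$, because $U_{B_-}$ is likewise a supersolution and direct parabolic comparison gives the wrong direction. My plan is a modulation argument exploiting strict monotonicity of $\beta \mapsto U_\beta$ (Lemma~\ref{th:stationary}): on the region $\{u \in (0, \pi/2)\}$, define $\beta(r, t)$ implicitly by $u(r, t) = U_{\beta(r, t)}(r)$, so that the sandwich becomes $B_-(t) < \beta(r, t) < B_+(t)$. Deriving the (quasilinear, non-divergence-form) evolution equation for $\beta$ from \eqref{eq:u}, one uses \eqref{hyp-inter} to pin $\beta(r_b(t), t) = \alpha(T-t)^{-l/\gamma} + O(\eta)$ on the lateral boundary and the inner ansatz to enforce $\beta(r, 0) \equiv \alpha T^{-l/\gamma}$ on the inner region, so that $\beta$ starts strictly inside $[B_-(t), B_+(t)]$ with a margin, and a maximum principle for the $\beta$-equation traps it there. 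The technical hurdles are the degeneration of the $\beta$-coordinate near $r = 0$ (where all $U_\beta$ vanish) and the non-standard form of the $\beta$-equation, so that the applicable maximum principle must be set up carefully; if this proves cumbersome, a first-touching-point contradiction on $u - U_{B_-}$ at the first hypothetical interior contact --- using the strictly positive supersolution defect together with a Hopf-lemma refinement to absorb the $\sin$ nonlinearity, and with boundary contacts ruled out by the strict margin at $r_b(t)$ --- serves as an equivalent alternative.
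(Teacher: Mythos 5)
Your treatment of \eqref{hyp-inter} (read off from $\psi_q\in\mathcal W^1_{s_0,s_1}$) and of \eqref{hyp-out} (comparison with the stationary solutions $u\equiv 0$, $u\equiv\pi$, after verifying $0\le u_{0,q}\le\pi$) matches the paper; in fact your global version of \eqref{hyp-out} is a small simplification of the paper's argument, which instead restricts attention to $y\ge e^{\sigma s}$ and uses \eqref{hyp-inter} to control the lateral boundary. Your upper bound in \eqref{hyp-inn} is also essentially identical to the paper's: $U_{\alpha/\delta}$, read as a function of $(r,t)$, is a supersolution because the scaling parameter $B_+(t)$ is increasing, and the initial and lateral boundary conditions are checked exactly as you describe (the paper's \eqref{eq:46}, \eqref{eq:27}).

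The gap is in the lower bound of \eqref{hyp-inn}, and it is a real one. You correctly observe that $U_{B_-}$ is itself a supersolution (parabolic defect $r\dot B_- U_1'(B_- r)>0$), so direct comparison cannot give $u>U_{B_-}$. Both of your proposed remedies, however, do not close this. The first-touching-point idea does not work: setting $w=u-U_{B_-}$ and evaluating at a hypothetical first interior zero, the identity $\partial_t w = \Delta w - Vw - r\dot B_- U_1'(B_- r)$ at the touching point (where $w=0$) gives $\partial_t w = (\ge 0) - (>0)$, which is perfectly consistent with $\partial_t w\le 0$ --- the defect has the \emph{wrong} sign to produce a contradiction, and there is no Hopf-lemma refinement available at an interior minimum. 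The modulation argument is likewise under-specified: you would need to derive a useful parabolic structure for the $\beta$-equation and handle its degeneration at $r=0$, and there is no evident reason that the maximum principle for that equation should trap $\beta$ above $B_-(t)$ when $U_{B_-}$ is a supersolution in the original variables.

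The device the paper actually uses, and which is absent from your proposal, is the construction of an explicit \emph{sub}solution slightly above $U_{\alpha\delta}$ in the inner variables: $\underline\Phi(\xi,s)=U_{\alpha\delta}(\xi)+e^{-2\omega_l s}q(\xi)$, where $q$ solves the linear ODE \eqref{eq:36} with source $(\beta+\tfrac12+\omega_l)\xi U_{\alpha\delta}'$, so as to \emph{cancel} the offending transport term and overshoot by $-\beta\xi U_{\alpha\delta}'<0$, cf. \eqref{eq:50}--\eqref{eq:37}. The lemma's conclusion $U_{\alpha\delta}\le\Phi$ then follows from $q\ge 0$, which the paper obtains from the integral identity \eqref{eq:26}. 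This perturbative correction, with the right sign and verifiable size, is the essential ingredient that your argument would need to replace; without it the lower half of \eqref{hyp-inn} is not established.
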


\begin{proof}
  Estimate \eqref{hyp-inter} follows directly from the definition of
  $\mathcal W^{1}_{ s_{0}, s_{1}}$.  According to
  Lemma~\ref{lem:inner} below, if we pick
  $\eta<\alpha_{l}(\delta^{-\gamma}-1)$ and $ s_{0}$ large enough then
  \eqref{hyp-inn} must hold.  It remains to show \eqref{hyp-out} but
  this is a straight forward implication of maximum
  principle.  Namely, from \eqref{eq:21} it follows that
  \begin{align*}
    \lvert\psi_{q}(s_{0})(y)\rvert\le \frac{\pi}{2},
  \end{align*}
  while at the boundary $y=e^{\sigma s}$ of the outer region we have
  \begin{align*}
    \lvert\psi_{q}(s)\rvert_{y=e^{\sigma s}}\sim e^{-\lambda_{l} s}(e^{\sigma s})^{2\lambda_{l}}=e^{-(1-2\sigma)\lambda_{l} s}\le e^{-(1-2\sigma)\lambda_{l} s_{0}}\ll1
  \end{align*}
  as long as $s_{0}$ is large enough.  If we now notice that
  $\psi(s)(y)=\pm\frac{\pi}{2}$ are exact solutions to \eqref{eq:68},
the maximum principle implies that the solution $\psi_{q}$ is
  confined to the strip $\lvert\psi_{q}(s)(y)\rvert\le \frac{\pi}{2}$.
\end{proof}

Let us define a map
\begin{align}
  \label{eq:15}
  P_{ s_{0}, s_{1}}(q):=\left(\langle \psi_{q}(\cdot, s_{1}),\phi_{0}\rangle,\dots,\langle \psi_{q}(\cdot, s_{1}),\phi_{l-1}\rangle\right)
\end{align}
which is analytic as a map
$P_{ s_{0}, s_{1}}:\mathcal U_{ s_{0}, s_{1}}\to\mathbb R^{l}$
(via analytic dependence on initial data).  The set
$\mathcal U_{ s_{0}, s_{1}}$ was defined in such a way that the
roots of $P_{ s_{0}, s_{1}}$ can never cross the boundary of
$\mathcal U_{ s_{0}, s_{1}}$.

\begin{Lem}
  \label{lem:interior}
  If $ s_{0}$ is large enough and
  $q\in\mathcal U_{ s_{0}, s_{1}}$ is a root of
  $P_{ s_{0}, s_{1}}$ (i.e. $P_{ s_{0}, s_{1}}(q)=0$) for some $s_1 > s_0$, then
  $q\notin\partial \mathcal U_{ s_{0}, s_{1}}$.
\end{Lem}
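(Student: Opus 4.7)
The plan is to argue by contradiction: assume $q \in \partial \mathcal U_{s_0, s_1}$ with $P_{s_0, s_1}(q) = 0$ and rule out each type of boundary point. A boundary point is either (a) a point of the outer sphere $\{|q| = e^{-\lambda_l s_0}\}$, or (b) a point where the strict inequality in \eqref{eq:58} is saturated at some $s^* \in [s_0, s_1]$ and $y^* \in [K e^{-\omega_l s^*}, e^{\sigma s^*}]$. The main tool in both cases is the Duhamel representation from Corollary~\ref{th:duhamel}, used both spectrally (by pairing with $\phi_n$) and pointwise (to recover the sup-norm structure in \eqref{eq:58}).

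The first step uses $P(q)=0$ to control the unstable modes $\phi_0,\ldots,\phi_{l-1}$. Setting $c_n(s) := \langle \psi_q(s), \phi_n \rangle$, Duhamel yields
\begin{equation*}
c_n(s_1) = e^{-\lambda_n(s_1 - s_0)} c_n(s_0) + \int_{s_0}^{s_1} e^{-\lambda_n(s_1 - \tau)} \langle F(\psi_q(\tau)), \phi_n \rangle \, d\tau,
\end{equation*}
which for $n < l$ inverts, thanks to $c_n(s_1)=0$, to
\begin{equation*}
c_n(s_0) = -\int_{s_0}^{s_1} e^{\lambda_n(\tau - s_0)} \langle F(\psi_q(\tau)), \phi_n \rangle \, d\tau.
\end{equation*}
Since $\psi_q \in \overline{\mathcal W^1_{s_0, s_1}}$, Lemma~\ref{lem:intermediate-implications} provides pointwise control of $\psi_q$ in all three regions, and together with $|F(\phi)| \lesssim |\phi|^3$ this bounds $|\langle F(\psi_q(\tau)), \phi_n \rangle| \lesssim e^{-3\lambda_l \tau}$. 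Because $\lambda_n < \lambda_l < 3\lambda_l$, the $\tau$-integral converges uniformly in $s_1$ to a quantity of order $e^{-3\lambda_l s_0}$, hence $|c_n(s_0)| \lesssim e^{-3\lambda_l s_0} = o(e^{-\lambda_l s_0})$. The initial data \eqref{eq:20} is built so that $c_n(s_0)$ differs from $q_n$ only by exponentially small contributions from the inner and outer cutoff regions, so $|q| = o(e^{-\lambda_l s_0})$, contradicting $|q| = e^{-\lambda_l s_0}$ and ruling out case (a).

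For case (b), I would analyze the residual $\tilde \psi(s) := \psi_q(s) + e^{-\lambda_l s} \phi_l$ via Duhamel. The initial residual $\tilde\psi(s_0)$ vanishes identically on the intermediate scale by design of \eqref{eq:20}, so the linear evolved term $e^{-(s-s_0)A}\tilde\psi(s_0)$ only picks up the spread-out tails of the inner and outer cutoff regions, each contributing pointwise corrections of order $e^{-\mu s_0}$ for some $\mu > \lambda_l$. The nonlinear forcing $F(\psi_q)$ is cubic, bounded by $\eta^3 e^{-3\lambda_l \tau}(y^{-\gamma}+y^{2\lambda_l})^3$ on the intermediate scale. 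After applying the semigroup and using the $P(q)=0$ constraint (which strips off the potentially dangerous unstable part of $\tilde\psi$), these estimates combine to give a pointwise bound on $\tilde \psi$ strictly smaller than the target $\eta e^{-\lambda_l s}(y^{-\gamma}+y^{2\lambda_l})$, provided $\eta$ is chosen small and $s_0$ large, ruling out saturation in \eqref{eq:58}. The \emph{main obstacle} is the pointwise estimate on $e^{-sA}$ acting against the structured weight $y^{-\gamma}+y^{2\lambda_l}$ with small, controllable constants: this is precisely the semigroup bound stated later as Lemma~\ref{estim-semigroup}, and it is the heart of the argument, since without it one cannot convert the spectral $L^2$ control into the pointwise improvement of \eqref{eq:58} needed to close the contradiction.
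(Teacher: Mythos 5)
Your proposal mirrors the paper's two-step strategy exactly: case (a) is the content of Lemma~\ref{lem:ball} (Duhamel plus $P(q)=0$ to show roots lie strictly inside the ball $B_{e^{-\lambda_l s_0}}(0)$), and case (b) is the improvement of \eqref{eq:58} to an arbitrarily small constant $\nu$ via the pointwise semigroup estimate of Lemma~\ref{estim-semigroup}, which is precisely what Sections~\ref{sec:short-time}--\ref{sec:long-time} supply. One small slip worth correcting: the bound $\lvert\langle F(\psi_q(\tau)),\phi_n\rangle\rvert\lesssim e^{-3\lambda_l\tau}$ is too strong, because on the shrinking inner region $y\lesssim Ke^{-\omega_l\tau}$ the solution is $O(1)$ rather than $O(e^{-\lambda_l\tau})$; the correct rate, worked out in Lemma~\ref{lem:nonlinear-norm}, is $e^{-(\lambda_l+\kappa\omega_l)\tau}$ with $\kappa=\min(2\gamma,\omega/2)$, which for $d=7,\dots,10$ is strictly slower than $e^{-3\lambda_l\tau}$ but is still fast enough (since $\lambda_n<\lambda_l<\lambda_l+\kappa\omega_l$) to give $\lvert q_n\rvert=o(e^{-\lambda_l s_0})$, so your conclusion survives.
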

\begin{proof}
  Sections \ref{sec:short-time} and \ref{sec:long-time} of this paper
  consist of a series of Lemmas that altogether guarantee that if
  $P_{ s_{0}, s_{1}}(q)=0$ then for any $\nu\in (0,1)$ we can
  choose $ s_{0}\gg 1$ large enough so that
  \begin{align}
    \label{eq:38}
    \lvert\psi_{q}(s)(y)+e^{-\lambda_{l}s}\phi_{l}(y)\rvert \le \nu e^{-\lambda_{l} s}(y^{-\gamma}+y^{2\lambda_{l}})\qquad  Ke^{-\omega_{l} s}\le y\le e^{\sigma s}.
  \end{align}
  In addition to that result Lemma~\ref{lem:ball} shows that any root
  of $P_{ s_{0}, s_{1}}$ lies close to the origin in a sense that
  $q\in B_{\varepsilon e^{-\lambda_{l} s_{0}}}(0)$ for some
  $\varepsilon\ll 1$.
\end{proof}

\bigskip

\noindent
\textbf{The proof of Theorem 1.1}.
The above Lemma is crucial in applying the following topological argument
.
Lemma~\ref{lem:interior} and the general homotopy
principle guarantee that the degree of zero is conserved:
\begin{align*}
  \text{deg}(P_{ s_{0}, s_{1}},\mathcal U_{ s_{0}, s_{1}},0)=\text{deg}(P_{ s_{0}, s_{0}},\mathcal U_{ s_{0}, s_{0}},0)
\end{align*}
as long as $\mathcal U_{ s_{0}, s}\ne\emptyset$ for any
$ s_{0}\le s\le s_{1}$.  A direct computation uncovers that
$P_{ s_{0}, s_{0}}$ is a small perturbation of identity, provided that
$ s_{0}\gg1$, so
\begin{align*}
  \text{deg}(P_{ s_{0}, s_{1}},\mathcal U_{ s_{0}, s_{1}},0)=\text{deg}(P_{ s_{0}, s_{0}},\mathcal U_{ s_{0}, s_{0}},0)=1.
\end{align*}
This leads to a conclusion that as long as
$\mathcal U_{ s_{0}, s_{1}}\ne\emptyset$ it must contain a root of
$P_{ s_{0}, s_{1}}$.

By continuous dependence of solutions on initial data, we also claim
that $\mathcal U_{ s_{0}, s_{1}}\ne \emptyset$ if $ s_{0}$ is
sufficiently large.  Assume now that maximal time of existence of the
set $\mathcal U_{ s_{0}, s}$ is finite, i.e.
\begin{align*}
   s^{*}=\sup\{ s> s_{0}\,:\,\mathcal U_{ s_{0}, s}\ne\emptyset\}<\infty.
\end{align*}
The set $U_{ s_{0}, s^{*}}$ is nonempty and thus it contains a root
$P_{ s_{0}, s^{*}}$ but for any such root (say we call it $q^{*}$) we
must have $\psi_{q^{*}}\in\mathcal W^{\theta}_{ s_{0}, s^{*}}$ for
$0<\theta<1$.  But from smoothness of $\psi_{q^{*}}$, we deduce that
$\psi_{q^{*}}\in\mathcal W^{1}_{ s_{0}, s^{*}+\eta}$ for some
$\eta>0$, which contradicts $ s^{*}<\infty$.

At this point, $\mathcal U_{ s_{0},\infty}\ne\emptyset$ along with
Lemma~\ref{lem:intermediate-implications}, imply our main theorem
\ref{th:main-rate}.  Indeed, from \eqref{hyp-inn} and
$ye^{s\omega_{l}}=r(T-t)^{l/\gamma}$ we have
\begin{align*}
  U_{\alpha\delta}(r/(T-t)^{l/\gamma})< u(r,t) < U_{\alpha/\delta}(r/(T-t)^{l/\gamma})
\end{align*}
for $r$ sufficiently close to the origin.  If we now divide by $r$ and take
the limit $r\to0$ (mind that $u(0,t) = 0$ and $u(r,t)$ is smooth for $t>0$) and apply
$U_{\alpha}'(0)=\alpha$ we get
\begin{align*}
  {\alpha\delta}(T-t)^{-l/\gamma} \le \partial_{r}u(0,t) \le \alpha/\delta (T-t)^{-l/\gamma}.
\end{align*}
This last estimate yields
\begin{align*}
  \frac{1}{\partial_{r}u(0,t)}\propto (T-t)^{l/\gamma},
\end{align*}
which is just the blow-up rate that we claimed.

We complete this section with the proof of a priori estimates for the
inner layer.

\begin{Lem}
  \label{lem:inner}
  For any $0<\delta<1$ and $\eta<\alpha_{l}(\delta^{-\gamma}-1)$ the bound
  \begin{align}
    \label{eq:24}
    \lvert\psi_{q}(s)(y)+e^{-\lambda_{l} s}\phi_{l}(y)\rvert<\eta e^{-\lambda_{l} s}(y^{-\gamma}+y^{2\lambda_{l}}),\qquad Ke^{\omega_{l} s}\le y\le e^{\sigma s}
  \end{align}
  for $s_{0}\le s\le s_{1}$ implies
  \begin{subequations}
    \begin{align}
      \label{eq:22}
      U_{\alpha\delta}(ye^{\omega_{l} s})-\frac{\pi}{2}< \psi_{q}(s)(y)< U_{\alpha/\delta}(ye^{\omega_{l} s})-\frac{\pi}{2},\qquad 0\le y\le Ke^{-\omega_{l} s}
    \end{align}
  \end{subequations}
  for $s_{0}\le s\le s_{1}$, provided that $s_{0}$ is taken
  sufficiently large.
\end{Lem}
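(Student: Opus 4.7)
The plan is to upgrade \eqref{eq:24} into a pointwise two-sided comparison on the inner region by a parabolic barrier argument, carried out in the $(r,t)$ variables where \eqref{eq:u} is autonomous and the comparison principle applies (remark following Proposition~\ref{th:existence-u}). Writing $R_*(t) := (T-t)^{1/2+\omega_l}$, the substitution $y e^{\omega_l s} = r/R_*(t)$ identifies the inner region $y \in [0, Ke^{-\omega_l s}]$ with the shrinking parabolic domain $D := \{(r,t) : 0 \le r \le K R_*(t),\; 0 \le t \le t_1\}$ and turns the candidate barriers $U_{\alpha\delta}(ye^{\omega_l s})$ and $U_{\alpha/\delta}(ye^{\omega_l s})$ into the rescaled stationary profiles $v_\beta(r,t) := U_\beta(r/R_*(t))$.

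Step~1 is the boundary ordering on $\partial_p D$. At $r = 0$ all three functions vanish. At $t = 0$ the inner-region initial data \eqref{eq:14} reads $u(r,0) = U_\alpha(r/R_*(0))$, strictly between $v_{\alpha\delta}(r,0)$ and $v_{\alpha/\delta}(r,0)$ by monotonicity of $U_\beta$ in $\beta$ (Lemma~\ref{th:stationary}). On the moving lateral boundary $r = K R_*(t)$ one plugs the asymptotics $\phi_l(y) = \alpha_l y^{-\gamma} + O(y^{-\gamma+2})$ from \eqref{eq:eigen0} and the large-argument asymptotic $U_\beta(z) = \pi/2 - h\beta^{-\gamma} z^{-\gamma} + O(z^{-\gamma-2})$ of Lemma~\ref{th:stationary} into the hypothesis \eqref{eq:24}; the constant $\alpha$ was picked precisely so that $h\alpha^{-\gamma} = \alpha_l$, and the $y^{2\lambda_l}$ contribution is only $O(e^{-\lambda_l s})$ at $y = K e^{-\omega_l s}$. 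For $s_0$ large and $\eta < \alpha_l(\delta^{-\gamma}-1)$, the hypothesis thus reduces to the required strict ordering at the lateral boundary.

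Step~2 is the upper bound. A direct calculation using the elliptic ODE for $U_\beta$ gives
\[
  P v_\beta \;=\; -\frac{r\,\dot R_*(t)}{R_*(t)^2}\,U_\beta'\!\left(\frac{r}{R_*(t)}\right),
\]
where $P := \partial_t - \partial_{rr} - \frac{d-1}{r}\partial_r + \frac{d-1}{2r^2}\sin(2\,\cdot\,)$ is the parabolic operator of \eqref{eq:u}. Since $\dot R_*(t) < 0$ and $U_\beta' > 0$ by Lemma~\ref{th:stationary}, this is strictly positive, so $v_{\alpha/\delta}$ is a strict supersolution; together with the boundary ordering of Step~1, the comparison principle gives $u(r,t) < v_{\alpha/\delta}(r,t)$ throughout $D$.

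The main obstacle is the lower bound: the same identity shows $v_{\alpha\delta}$ is \emph{also} a supersolution, so a direct application of the comparison principle cannot yield $u > v_{\alpha\delta}$. My plan is to pass to the $\Phi := u/r$ variable of Proposition~\ref{th:existence-u}: there the PDE $\partial_t\Phi = \Delta\Phi + \mathcal F(\Phi)$ on $\mathbb R^{d+2}$ has a bounded, non-negative linearized zeroth-order coefficient $\mathcal F'(\Phi) = 2(d-1)(\sin(r\Phi)/r)^2$, in contrast to the sign-indefinite singular $(d-1)r^{-2}\cos(2\xi)$ that appears when one linearizes \eqref{eq:u} directly. Setting $W := \Phi - v_{\alpha\delta}/r$, the sine mean-value identity produces a linear parabolic equation $LW = g$ with $L := \partial_t - \Delta - \mathcal F'(\tilde\Phi)$ having non-negative zeroth-order part, strictly negative source $g < 0$, and boundary data $W \ge 0$ on $\partial_p D$ inheriting the positive slack from Step~1. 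Splitting $W = W_h + W_p$ by Duhamel, $W_h \ge 0$ carries the boundary slack through the positivity-preserving semigroup generated by $-L$, while $W_p \le 0$ is the cumulative effect of the negative source. The crux of the proof is the quantitative inequality $W_h \ge |W_p|$: a careful estimate of each piece in terms of $(K, \eta, \delta, s_0)$ shows that the slack dominates once $s_0$ is taken sufficiently large, forcing $W \ge 0$ on $D$ and, by the strong maximum principle, $W > 0$ strictly in the interior. Translating back through $r/R_*(t) = y e^{\omega_l s}$ then yields exactly \eqref{eq:22}.
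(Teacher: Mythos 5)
Your Steps 1 and 2 track the paper closely: the boundary ordering at $\xi=K$ uses the same asymptotics of $\phi_l$ and $U_\beta$, and $U_{\alpha/\delta}(\xi)$ is a supersolution precisely because $\mathcal U(U_{\alpha/\delta})=(\tfrac12+\omega_l)\xi U_{\alpha/\delta}'\ge0$ (this is the identity you recover in $(r,t)$ variables as $Pv_\beta=-r\dot R_*R_*^{-2}U_\beta'$). You also correctly isolate the real difficulty: $U_{\alpha\delta}$ is \emph{also} a supersolution, so no naked comparison gives the lower bound.

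The gap is in your proposed resolution. You reduce the lower bound to the inequality $W_h\ge|W_p|$ for the Duhamel decomposition $W=W_h+W_p$, but this is exactly the quantitative heart of the lemma and is not at all automatic; having $\mathcal F'(\tilde\Phi)\ge0$ and $g<0$ gives \emph{no} sign conclusion on $W$ without a competition estimate. Concretely, in $(r,t)$ variables the source $g=-\big(Pv_{\alpha\delta}\big)/r\sim -(T-t)^{-1}R_*^{-1}U_{\alpha\delta}'(\xi)$ grows unboundedly as $t\to T$, the operator $L$ is time-dependent (via the mean-value point $\tilde\Phi$) so $W_h$, $W_p$ are not explicit, and the boundary slack on the shrinking lateral boundary $r=KR_*(t)$ is only $O(K^{-\gamma})$ with $K\to\infty$ and decays further as it diffuses inward. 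Showing the slack wins is delicate enough that the paper does not attempt it. Instead it builds an explicit sub\emph{barrier} in the inner variables: $\underline\Phi(\xi,s)=U_{\alpha\delta}(\xi)+e^{-2\omega_l s}q(\xi)$, where $q$ is chosen to solve the inhomogeneous linearized ODE \eqref{eq:36} with a right-hand side $(\beta+\tfrac12+\omega_l)\xi U_{\alpha\delta}'$ that \emph{overcompensates} the positive term in $\mathcal U(U_{\alpha\delta})$, yielding $\mathcal U(\underline\Phi)=-\beta\xi U_{\alpha\delta}'+$ (controllably small remainder) $\le0$. The needed positivity $q\ge0$ then comes from a reduction-of-order factorization (formula \eqref{eq:26}), not from a semigroup estimate. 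In short, the perturbation $e^{-2\omega_l s}q(\xi)$ is precisely the ``careful estimate of $|W_p|$'' your argument defers, and it must be constructed and bounded explicitly; without that construction, your last paragraph is a restatement of the obstacle rather than a proof.
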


\begin{proof}
  In this proof, we always assume that $0\le\xi\le K$, we also remind
  that $K~=~e^{k\omega_{l}s_{0}}$ can be made arbitrarily large by
  taking large $s_{0}$.  Rewriting the equation \eqref{eq:68} in inner
  variables ($\xi:=ye^{\omega_{l}s}$ and
  $\Phi(\xi, s)=\psi_{q}(s)(y)+\frac{\pi}{2}$) leads to
  \begin{align*}
    0=\mathcal U(\Phi):=\partial_{ s}\Phi+\left(\frac{1}{2}+\omega_{l}\right)\xi\partial_{\xi}\Phi-e^{2\omega_{l s}}\left(\partial_{\xi\xi}\Phi+\frac{d-1}{\xi}\partial_{\xi}\Phi-\frac{d-1}{2\xi^{2}}\sin(2\Psi)\right).
  \end{align*}
  The initial data $\psi_{0,q}$ translate to
  \begin{align}
    \label{eq:39}
    \Phi(\xi, s_{0})=U_{\alpha}(\xi),
  \end{align}
  while the bounds \eqref{eq:24}, along with
  $\phi_{l}(y)\approx\alpha_{l}y^{-\gamma}$ near the origin, lead to the
  following estimate at the boundary, $\xi=K$,
  \begin{align}
    \label{eq:46}
    \frac{\pi}{2}-(\alpha_{l}+\eta)K^{-\gamma}\le \Phi(K, s)\le \frac{\pi}{2}-(\alpha_{l}-\eta) K^{-\gamma}
  \end{align}
  up to the leading order in $K$ (mind that by taking $ s_{0}$ large
  enough we can make these higher order terms arbitrarily small),
  where we have used the asymptotics of eigenfunctions
  \eqref{eq:eigen0}.

  Substituting $\overline\Phi(\xi, s):=U_{\alpha/\delta}(\xi)$ leads to
  \begin{align}
    \label{eq:27}
    \mathcal U(\overline\Phi)=\left(\frac{1}{2}+\omega_{l}\right)\xi U_{\alpha/\delta}'(\xi)\ge0,
  \end{align}
  which makes $\overline\Phi(\xi, s)$ a natural candidate for a
  supersolution.  It remains to verify that it fulfills the respective
  inequalities involving initial and boundary conditions in the region
  $0\le\xi\le K$.  The monotonicity of $U_{\alpha}$ (see inequality
  \eqref{eq:U-monotone}) immediately leads to
  \begin{align*}
    \Phi(\xi, s_{0})=U_{\alpha}(\xi)\le U_{\alpha/\delta}(\xi)=\overline\Phi(\xi),
  \end{align*}
  On the other hand, the asymptotics of $U_{\alpha/\delta}(\xi)$ as
  $\xi\to\infty$ yields
  \begin{align*}
    \overline\Phi(K, s)=\frac{\pi}{2}-h(\alpha/\delta)^{\gamma} K^{-\gamma}.
  \end{align*}
  Comparing this with \eqref{eq:46}, we get
  $\overline\Phi(K, s)>\Phi(K, s)$ if
  \begin{align}
    \label{eq:61}
    0<\eta<\alpha_{l}\left(1-\delta^{\gamma}\right),
  \end{align}
  which makes $\overline\Phi(\xi, s)$ a supersolution.  In terms of
  $\psi_{q}$ this means that
  \begin{align*}
    \psi_{q}(s)(y)\le \overline\Phi(\xi, s)-\frac{\pi}{2}=U_{\alpha/\delta}(ye^{\omega_{l} s})-\frac{\pi}{2},
  \end{align*}
  which proves the upper part of the bound \eqref{eq:22}.

  The subsolution requires a subtler approach: on one hand we need
  something we can compare to $U_{\alpha\delta}$, on the other hand we
  already showed that taking $U_{\alpha}(\xi)$ alone can only lead to
  a supersolution.  Surprisingly, a small perturbation of
  $U_{\alpha\delta}$ can serve as a subsolution; let us define
  \begin{align}
    \label{eq:47}
    \underline\Phi(\xi, s):=U_{\alpha\delta}(\xi)+e^{-2\omega_{l} s}q(\xi),
  \end{align}
  where the shape of the profile $q$ is yet to be determined.  In the
  following paragraphs, we show that $q$ can be chosen in such a way
  that $\underline\Phi$ is actually a subsolution and
  $q(\xi)\ge0$.  Then, by definition of $\underline\Phi$, the
  following series of inequalities holds:
  \begin{align*}
    U_{\alpha\delta}(\xi)\le U_{\alpha\delta}(\xi)+e^{-2\omega_{l} s}q(\xi)=\underline\Phi(\xi, s)\le \Phi(\xi, s)
  \end{align*}
  thus providing the lower bound of \eqref{eq:24}.

  Regrouping the terms leads us to
  \begin{subequations}
    \label{eq:50}
    \begin{align}
      \mathcal U(\underline\Phi)=&-\left(q''+\frac{d-1}{\xi}q'-\frac{d-1}{\xi^{2}}\cos(2U_{\alpha\delta})q\right)\\
                                 &+\left(\frac{1}{2}+\omega_{l}\right)\xi U_{\alpha\delta}'\\
      \label{eq:53}
                                 &+e^{-2\omega_{l} s}\left(-2\omega_{l}q+\left(\frac{1}{2}+\omega_{l}\right)\xi q'\right)\\
      \label{eq:54}
                                 &+e^{2\omega_{l} s}\frac{d-1}{2\xi^{2}}(\sin(2U_{\alpha\delta})-\sin(2U_{\alpha\delta}+e^{-2\omega_{l} s}q)+2\cos(2U_{\alpha\delta})e^{-2\omega_{l} s}q),
    \end{align}
  \end{subequations}
  which, despite looking complicated, has some desired
  qualities.  Namely, the last two terms can be regarded as small for
  large times (the last term is a nonlinearity of second order in
  $q$), while the first two terms are of comparable order in
  time.  This formal analysis leads to the particular choice of $q$
  that compensates for the positivity of the second term in
  \eqref{eq:50}; take $q$ solving
  \begin{align}
    \label{eq:36}
    \begin{split}
      q''+\frac{d-1}{\xi}q'-\frac{d-1}{\xi^{2}}\cos(2U_{\alpha\delta})q=\left(\beta+\frac{1}{2}+\omega_{l}\right) \xi U_{\alpha\delta}',\\
      \qquad q(0)=0, \qquad q'(0)=0.
    \end{split}
  \end{align}
  The whole expression simplifies \eqref{eq:50} significantly to
  \begin{align}
    \label{eq:37}
    \mathcal U(\underline\Phi)=-\beta\xi U_{\alpha\delta}'(\xi)+\mathcal O(e^{-2\omega_{l} s}\widetilde q(\xi))
  \end{align}
  for some $\widetilde q$ and any $\beta>0$. This, in turn, leads to
  $\mathcal U(\underline\Phi)\le 0$ (here we anticipate that the
  remainder term $e^{-2\omega_{l} s}\widetilde q(\xi)$ is small for
  our choice of $q$, we prove this below).  We would like to point
  out, that when $\beta=0$ the $\underline\Phi$ is just a second order
  approximation to a solution of $\mathcal U(\Phi)=0$ (the first order
  approximation being $U_{\alpha\delta}$).

  For large $\xi$, any solution to \eqref{eq:36} behaves like
  $q(\xi)\sim\xi^{2-\gamma}$, which means that, depending on the sign
  of $2-\gamma$, $q$ is either bounded or increases.  The first case
  leads to smallness (the whole perturbation decays exponentially in
  time), in the second case $q$ increases with $\xi$ and thus is
  maximal at the boundary of the inner region $\xi=K$.  At the
  boundary we have
  \begin{align*}
    \lvert e^{-2\omega_{l} s}q(\xi)\rvert\le \lvert e^{-2\omega_{l} s}q(K)\rvert\sim
    (Ke^{-\omega_{l} s})^{2}K^{-\gamma}
  \end{align*}
  and the right hand side is small when $ s_{0}$ is taken
  large.  The arbitrary smallness of the perturbation is necessary for
  the following comparison between initial data to hold
  \begin{align}
    \label{eq:51}
    \underline\Phi(\xi, s_{0})=U_{\alpha\delta}(\xi)+e^{-2\omega_{l} s}q(\xi)\le U_{\alpha}(\xi)=\Phi(\xi, s_{0}).
  \end{align}
  Moreover, at the boundary $\xi=K$ we have
  \begin{align*}
    \underline\Phi(K, s)=-h(\alpha\delta K)^{-\gamma}(1+\mathcal O((e^{-\omega_{l} s}K)^{2}))
  \end{align*}
  which, together with \eqref{eq:46} yields
  \begin{align}
    \label{eq:52}
    \underline\Phi(K, s)\le \Phi(K, s)
  \end{align}
  if we pick
  \begin{align}
    \label{eq:59}
    \eta<\alpha_{l}\left(\delta^{-\gamma}-1\right)
  \end{align}
  and $ s_{0}$ accordingly large.  As for the remainder terms, the
  term \eqref{eq:53} must obey the same type of bounds as $q$, while
  the last term \eqref{eq:54} is a nonlinear term of second order in
  $e^{-2\omega_{l} s}q$ thus, up to first order, it is well
  approximated by
  \begin{align*}
    \eqref{eq:54}\sim e^{-2\omega_{l} s}\widetilde q(\xi)
  \end{align*}
  where $\widetilde q(\xi)=\xi^{3}$ near $\xi=0$ and
  $\widetilde q(\xi)=\xi^{4-3\gamma}$ when $\xi$ is large.  Exploiting
  the fact that $\gamma>1$ we get
  $\xi^{4-3\gamma}=\xi^{2-\gamma}\le \xi^{2-\gamma}K^{-2(\gamma-1)}\ll
  \xi^{2-\gamma}$,
  thus the nonlinear term is certainly of smaller order of magnitude
  than $q$.  Combining \eqref{eq:37}, \eqref{eq:51} and \eqref{eq:52}
  we conclude that $\underline\Phi$ is a subsolution,
whence the comparison principle yields
  \begin{align*}
    U_{\alpha\delta}(\xi)+e^{-2\omega_{l}s}q(\xi)\le \Phi(\xi,s)
  \end{align*}
  but we are still missing the positivity of $q$ to get
  $U_{\alpha\delta}(\xi)\le \Phi(\xi,s)$ and finalize the proof.

  The positivity is easily seen if  we rewrite \eqref{eq:36} as
  \begin{align}
    \label{eq:26}
    \frac{d}{d\xi}\left(\frac{q(\xi)}{\xi U_{\alpha\delta}'(\xi)}\right)=\frac{1}{\xi^{d+1}}\int_{0}^{\xi}s^{d+1}U_{\alpha\delta}'(s)^{2}f(s)\,ds
  \end{align}
  where
  \begin{align}
    \label{eq:48}
    f(\xi)=\left(\beta+\frac{1}{2}+\omega_{l}\right) \xi U_{\alpha\delta}'(\xi)\ge0.
  \end{align}
  In the expression \eqref{eq:26} we got rid of the nonlinear term by
  turning to equation for $U_{\alpha\delta}$.  The idea for such
  treatment comes from the factorization of the operatorial form of
  the left hand side of \eqref{eq:36} and from the equation for
  $U_{\alpha\delta}$.  Nonnegativity of the source term
  \eqref{eq:48} gives $(q(\xi)/\xi U_{\alpha\delta}(\xi))'\ge0$ or
  \begin{align*}
    q(\xi)\ge \xi U_{\alpha\delta}'(\xi)\cdot\lim_{s\to0+}\frac{q(s)}{sU_{\alpha\delta}'(s)}=0,
  \end{align*}
  by the virtue of boundary conditions $q(0)=0$ and $q'(0)=0$.

  Combining \eqref{eq:61} and \eqref{eq:59} it is sufficient to pick
  $\eta$ such that
  \begin{align*}
    0<\eta<\alpha_{l}\min(\delta^{-\gamma}-1,1-\delta^{\gamma})=\alpha_{l}(\delta^{-\gamma}-1).
  \end{align*}
The proof is now complete.
\end{proof}

\section{Estimates for spectral coefficients}
\label{sec:prel-estim}

The goal of this and the remaining sections is to produce the missing
a priori estimates that we were eager enough to use in
Lemma~\ref{lem:interior} above.  That is, we aim to show that given a
solution $\psi_{q}$, such
that
\begin{align}
  \label{eq:62}
  \lvert\psi_{q}(s)(y)+e^{-\lambda_{l}s}\phi_{l}\rvert < \eta e^{-\lambda_{l}s}(y^{-\gamma}+y^{2\lambda_{l}})\qquad \text{for }Ke^{-\omega_{l}s}\le y\le e^{\sigma s}
\end{align}
(or $\psi_{q}\in\mathcal W_{s_{0},s_{1}}^{1}$ equivalently), with $q$
being the root of $P_{s_{0},s_{1}}$, we can improve the bound
\eqref{eq:62} to
\begin{align}
  \label{eq:87}
  \lvert\psi_{q}(s)(y)+e^{-\lambda_{l}s}\phi_{l}\rvert < \nu e^{-\lambda_{l}s}(y^{-\gamma}+y^{2\lambda_{l}})\qquad \text{for }Ke^{-\omega_{l}s}\le y\le e^{\sigma s}
\end{align}
for \emph{arbitrary} $\nu\in(0,1)$ provided we pick $s_{0}=s_{0}(\nu)$
large enough.

The first information that we can hope to extract is how does the
condition
\begin{align*}
  P_{s_{0},s_{1}}(q)=(\langle\psi_{q}(s_{1}),\phi_{0}\rangle,\dots,\langle\psi_{q}(s_{1}),\phi_{l-1}\rangle)=0
\end{align*}
influences the values of $q$.  For that, we need the Duhamel's formula
that we derived in Corollary~\ref{th:duhamel}, from which we get, for $n=0,1,...,\ell-1$,
\begin{align*}
0=\langle\psi_{q}( s_{1}),\phi_{n}\rangle&=e^{-\lambda_{n}( s_{1}- s_{0})}\langle\psi_{0,q},\phi_{n}\rangle+\int_{ s_{0}}^{ s_{1}}e^{-\lambda_{n}( s_{1}-s)}\langle F(\psi(s)),\phi_{n}\rangle\,ds\\
                                        &=e^{-\lambda_{n}( s_{1}- s_{0})}\left(q_{n}+e^{-\lambda_{l} s_{0}}\langle\widetilde\phi_{l},\phi_{n}\rangle\right)+\int_{ s_{0}}^{ s_{1}}e^{-\lambda_{n}( s_{1}-s)}\langle F(\psi(s)),\phi_{n}\rangle\,ds
\end{align*}
or equivalently
\begin{align}
  \label{eq:89}
  q_{n}=e^{-\lambda_{l} s_{0}}\langle\widetilde\phi_{l},\phi_{n}\rangle-\int_{ s_{0}}^{ s_{1}}e^{-\lambda_{n}( s_{0}-s)}\langle F(\psi(s)),\phi_{n}\rangle\,ds
\end{align}
with
\begin{align*}
  \widetilde\phi_{l}:=\sum_{n=0}^{l-1}e^{\lambda_{l}s_{0}}q_{n}\phi_{n}-e^{\lambda_{l}s_{0}}\psi_{0,q}.
\end{align*}
\begin{remark}
  We have defined $\widetilde\phi_{l}$ so that
  $\widetilde\phi_{l}=\phi_{l}$ in the region
  $y\in [\widetilde Ke^{\omega_{l}s_{0}},e^{\widetilde\sigma s_{0}})$.
      \end{remark}

Using the properties of $\psi_{q}$ we can now estimate the scalar
products $\langle\widetilde\phi_{l},\phi_{n}\rangle$ and
$\langle F(\psi(s)),\phi_{n}\rangle$ leading to
Lemma~\ref{lem:ball}.  As a first step towards proving
Lemma~\ref{lem:ball} we estimate the linear and nonlinear terms in
\eqref{eq:89} as follows.

\begin{Lem}
  \label{lem:linear-norm}
  The term $\widetilde \phi_{l}$ from the definition of initial data
  can be bounded by
  \begin{align*}
    \lVert\widetilde\phi_{l}-\phi_{l}\rVert\lesssim e^{-\kappa s_{0}}
  \end{align*}
  with some $\kappa>0$. Moreover, there holds
  \begin{align*}
    \lvert\langle \widetilde \phi_{l},\phi_{n}\rangle\rvert\lesssim e^{-\kappa s_{0}}
  \end{align*}
  for $n\ne l$.
\end{Lem}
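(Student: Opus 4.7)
\noindent\textbf{Proof plan for Lemma \ref{lem:linear-norm}.}
The plan is to estimate $\widetilde\phi_{l}-\phi_{l}$ region by region and then deduce the scalar product bound from Cauchy--Schwarz together with the orthogonality of the $\phi_{n}$'s. By the remark immediately preceding the lemma, the middle region $y\in [\widetilde K e^{-\omega_{l}s_{0}},e^{\widetilde\sigma s_{0}})$ contributes nothing since $\widetilde\phi_{l}=\phi_{l}$ there. What remains is to control the $\mathcal H$-norm of $\widetilde\phi_{l}-\phi_{l}$ on the inner region $I_{\mathrm{in}}=[0,\widetilde K e^{-\omega_{l}s_{0}})$ and on the outer region $I_{\mathrm{out}}=[e^{\widetilde\sigma s_{0}},\infty)$.

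On $I_{\mathrm{in}}$, I would use $|q_{n}|\le e^{-\lambda_{l}s_{0}}$ (since $q\in B_{e^{-\lambda_{l}s_{0}}}(0)$ by the definition of $\mathcal U_{s_{0},s_{1}}$) together with the near-origin asymptotics $\phi_{n}(y)=\alpha_{n}y^{-\gamma}+\mathcal O(y^{-\gamma+2})$ from \eqref{eq:eigen0}. Because $\rho\sim y^{d-1}$ near $0$ and $d-1-2\gamma=1+\omega>0$, a direct computation gives
\begin{equation*}
\int_{0}^{\widetilde K e^{-\omega_{l}s_{0}}}\phi_{n}(y)^{2}\,\rho\,dy\lesssim (\widetilde K e^{-\omega_{l}s_{0}})^{2+\omega}=e^{(2+\omega)(\widetilde k-1)\omega_{l}s_{0}}.
\end{equation*}
For the $U_{\alpha}$-term in $\psi_{0,q}$, I would change variables to $r=ye^{\omega_{l}s_{0}}$ and use $U_{\alpha}(r)-\pi/2=\mathcal O(r^{-\gamma})$ from \eqref{G4}, yielding
$e^{2\lambda_{l}s_{0}}\|U_{\alpha}(y e^{\omega_{l}s_{0}})-\pi/2\|_{L^{2}(I_{\mathrm{in}},\rho dy)}^{2}\lesssim e^{\omega_{l}(2+\omega)(\widetilde k-1)s_{0}}$, by a direct computation using $2\gamma-d=-2-\omega$ and $\lambda_{l}=\gamma\omega_{l}$. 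Since $\widetilde k<1$, both contributions decay like $e^{-\kappa_{1}s_{0}}$ with $\kappa_{1}=(2+\omega)(1-\widetilde k)\omega_{l}/2>0$.

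On $I_{\mathrm{out}}$, I would use the crude bound $|\psi_{0,q}|\le\pi/2$ from \eqref{eq:21} and the large-$y$ asymptotics $\phi_{n}\sim\beta_{n}y^{2\lambda_{n}}$ from \eqref{eq:eigen1}. The Gaussian weight $\rho=y^{d-1}e^{-y^{2}/4}$ provides decay of the form
\begin{equation*}
\int_{e^{\widetilde\sigma s_{0}}}^{\infty}y^{m}e^{-y^{2}/4}\,dy\lesssim e^{\widetilde\sigma(m-1)s_{0}}\exp\!\left(-\tfrac{1}{4}e^{2\widetilde\sigma s_{0}}\right),
\end{equation*}
which beats any prefactor $e^{2\lambda_{l}s_{0}}$; hence the outer contribution is doubly exponentially small and can be absorbed into $e^{-\kappa s_{0}}$ for any fixed $\kappa>0$. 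Combining both estimates produces
$\|\widetilde\phi_{l}-\phi_{l}\|\lesssim e^{-\kappa s_{0}}$ with any $\kappa\in(0,\kappa_{1}]$.

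The second assertion is then immediate: for $n\ne l$ we have $\langle\phi_{l},\phi_{n}\rangle=0$ by orthonormality of the eigenbasis from Lemma~\ref{th:extension}, so
\begin{equation*}
|\langle\widetilde\phi_{l},\phi_{n}\rangle|=|\langle\widetilde\phi_{l}-\phi_{l},\phi_{n}\rangle|\le \|\widetilde\phi_{l}-\phi_{l}\|\,\|\phi_{n}\|\lesssim e^{-\kappa s_{0}}
\end{equation*}
by Cauchy--Schwarz. The main technical point is the bookkeeping of exponents in the inner region: one must check that the factor $e^{2\lambda_{l}s_{0}}$ coming from squaring $e^{\lambda_{l}s_{0}}\psi_{0,q}$ is indeed beaten by the volume factor $e^{-d\omega_{l}s_{0}}$ from the change of variables, using the identity $2\gamma-d=-2-\omega$; everything else is routine.
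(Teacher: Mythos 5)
Your proof is correct and follows essentially the same strategy as the paper: split the error $\widetilde\phi_{l}-\phi_{l}$ into the inner region $[0,\widetilde Ke^{-\omega_{l}s_{0}})$ and the outer region $[e^{\widetilde\sigma s_{0}},\infty)$ (the middle contributing nothing), integrate, and then conclude the scalar-product bound from orthogonality and Cauchy--Schwarz. Your exponent bookkeeping on the inner region (using $d-1-2\gamma=1+\omega$, $\lambda_{l}=\gamma\omega_{l}$, and $|q_{n}|\le e^{-\lambda_{l}s_{0}}$, the latter coming from $q\in B_{e^{-\lambda_{l}s_{0}}}(0)$ in the definition of $\mathcal U_{s_{0},s_{1}}$, which the paper leaves implicit) matches the paper's final rate $e^{-(2+\omega)(1-\widetilde k)\omega_{l}s_{0}}$; on the outer region you bypass the paper's pointwise claim $|\phi_{l}-\widetilde\phi_{l}|\lesssim y^{2\lambda_{l}}$ (which as written is loose for $y$ near $e^{\widetilde\sigma s_{0}}$, since $e^{\lambda_{l}s_{0}}|\psi_{0,q}|$ can exceed $y^{2\lambda_{l}}$ when $\widetilde\sigma<1/2$) and instead use $|\psi_{0,q}|\le\pi/2$ directly together with the Gaussian tail, which is cleaner and yields the same doubly exponential smallness.
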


\begin{proof}
  The notion of $\widetilde\phi_{l}$ is convenient since we have
  $\widetilde\phi_{l}=\phi_{l}$ in
  $y\in[\widetilde Ke^{-\omega_{l}s_{0}},e^{\widetilde\sigma s_{0}})$,
  so $\widetilde\phi_{l}$ comes very close to the pure mode
  $\phi_{l}$.  We start with
  \begin{align*}
    |\phi_l(y)-\tilde\phi_l(y)|\lesssim
    \begin{cases}
      y^{-\gamma}& \text{for } y\in(0,\widetilde Ke^{-\omega_{l} s_{0}})\\
      0 & \text{for } y\in[\widetilde Ke^{-\omega_{l} s_{0}},e^{\widetilde\sigma s_{0}})\\
      y^{2\lambda_l} & \text{for } y\in[e^{\widetilde\sigma s_{0}},\infty),
    \end{cases}
  \end{align*}
  which follows from the definition of $\widetilde\phi_{l}$ and the
  asymptotics of eigenfunctions.  The norm can now be easily estimated
  as
  \begin{align*}
    \lVert \widetilde\phi_{l}-\phi_{l}\lVert^{2}&\lesssim \int_{0}^{\widetilde Ke^{-\omega_{l} s_{0}}}y^{d-2\gamma-1}\,dy+\int_{e^{\widetilde\sigma s_{0}}}^{\infty}y^{4\lambda_{l}+d-1}e^{-\frac{y^{2}}{4}}\,dy\\
                                                      &\lesssim (Ke^{-\omega_{l} s_{0}})^{2+\omega}=e^{-(2+\omega)(1-\widetilde k)\omega_{l} s_{0}}
  \end{align*}
  (the second integral is proportional to a double exponent in $ s_0$
  and thus subdominant).  By definition $0<\widetilde k<k<1$, which
  suffices to prove the first statement of the lemma.  The second
  statement follows from the first via Cauchy-Schwartz inequality and
  orthogonality of $\phi_{l}$ and $\phi_{n}$ for $n\ne l$
$
\langle\widetilde\phi_{l},\phi_{n}\rangle=\langle\widetilde\phi_{l}-\phi_{l},\phi_{n}\rangle\le\lVert\widetilde\phi_{l}-\phi_{l} \rVert,\,\, n\ne l.
$\end{proof}

In a similar spirit we prove an estimate for the nonlinear term.

\begin{Lem}
  \label{lem:nonlinear-norm}
  For $\psi_{q}\in\mathcal W_{s_{0},s_{1}}^{1}$, $n\in\mathbb N_{0}$
  and any $s_{0}\le s\le s_{1}$ we have
    \begin{align*}
    \left\vert \langle F(\psi( s)),\phi_{n}\rangle \right\vert
\lesssim (n+1)e^{-\lambda_{l} s}e^{-\kappa\omega_{l} s}
  \end{align*}
  with $\kappa=\min(2\gamma,\omega/2)>0$ and eigenvector $\phi_{n}$.
\end{Lem}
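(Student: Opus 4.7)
The plan is to split the scalar product according to the three regions identified by Lemma~\ref{lem:intermediate-implications}:
\begin{align*}
  \langle F(\psi_q(s)),\phi_n\rangle = \left(\int_{0}^{Ke^{-\omega_l s}} + \int_{Ke^{-\omega_l s}}^{e^{\sigma s}} + \int_{e^{\sigma s}}^{\infty}\right) F(\psi_q(s))(y)\,\phi_n(y)\,\rho(y)\,dy,
\end{align*}
and to estimate each piece using the region-specific pointwise bound on $\psi_q$ together with the asymptotics of $\phi_n$ from Lemma~\ref{th:extension}.

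On the intermediate interval, the property $\psi_q\in\mathcal W^{1}_{s_{0},s_{1}}$ gives $|\psi_q(s)(y)| \lesssim e^{-\lambda_l s}(y^{-\gamma}+y^{2\lambda_l})$, which is uniformly small once $s_{0}$ is large (at the endpoints this evaluates to order $K^{-\gamma}$ and $e^{-(1-2\sigma)\lambda_l s}$). I would then use the Taylor expansion $|\sin(2\psi)-2\psi|\lesssim|\psi|^{3}$ to obtain
\begin{align*}
  |F(\psi_q)(y)|\lesssim \frac{|\psi_q|^{3}}{y^{2}}\lesssim \frac{e^{-3\lambda_l s}(y^{-\gamma}+y^{2\lambda_l})^{3}}{y^{2}}.
\end{align*}
The prefactor $e^{-3\lambda_l s}=e^{-\lambda_l s}e^{-2\gamma\omega_l s}$ already accounts for the $\kappa=2\gamma$ piece of the claimed rate. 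Splitting the remaining integral into $y\le 1$ and $y\ge 1$, the Gaussian in $\rho$ handles everything at infinity, while for $y\le 1$ the integrand behaves like $y^{d-4\gamma-3}|\phi_n(y)|$, which is either uniformly integrable (when $d-4\gamma-2>0$) or whose singularity at $0$ is cut off at $y=Ke^{-\omega_l s}$ (when $d-4\gamma-2<0$), in either case yielding constants depending mildly on $K$ but not worsening the exponential rate.

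On the inner interval I would settle for the crude bound $|F(\psi_q)|\lesssim 1/y^{2}$ (a consequence of $|\psi_q|\le\pi/2$, which invalidates the cubic expansion). With $|\phi_n(y)|\lesssim\alpha_n y^{-\gamma}$ and $\rho\sim y^{d-1}$ near $0$, the integrand is bounded by $\alpha_n y^{d-\gamma-3}$, integrable at $0$ since $d-\gamma-2=\gamma+\omega>0$, and integration produces $\lesssim\alpha_n K^{\gamma+\omega}e^{-\lambda_l s}e^{-\omega\omega_l s}$. The growing factor $K^{\gamma+\omega}=e^{k(\gamma+\omega)\omega_l s_{0}}$ must then be absorbed into the available $e^{-\omega\omega_l s}$ margin, which can be done at the cost of at most half of this exponent, forcing the $\omega/2$ appearing in the statement. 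The outer interval contributes negligibly, since $\rho$ decays superexponentially on $y\ge e^{\sigma s}$ while $|F(\psi_q)|$ remains bounded by $1/y^2$.

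Taking the minimum of the two nontrivial rates yields $\kappa=\min(2\gamma,\omega/2)$. The principal obstacle is the careful bookkeeping of the $K$-factors: tracking how the large constant $K$ (which widens the intermediate interval but inflates the inner contribution) interacts with the available exponential margin is exactly what forces $\omega/2$ instead of the naive $\omega$ in the inner estimate. A secondary difficulty is the linear $(n+1)$ dependence: the asymptotic $\alpha_n\sim n^{\omega/4}$ already supplies polynomial growth in $n$, but this exceeds $n+1$ when $\omega>4$ (roughly $d\ge 9$), so upgrading to the stated linear factor likely requires either sharper Szeg\H{o}-type Laguerre estimates or a Cauchy--Schwarz argument exploiting $\lVert\phi_n\rVert=1$.
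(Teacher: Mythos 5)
Your overall decomposition is reasonable, but the inner-region estimate has a fatal gap, and it is worth being precise about why the handwaving in ``absorbed \dots\ at the cost of at most half of this exponent'' does not go through. With your crude bound $|F(\psi_q)|\lesssim y^{-2}$ and $|\phi_n|\lesssim\alpha_n y^{-\gamma}$, the inner contribution is indeed $\sim\alpha_n K^{\gamma+\omega}e^{-\lambda_l s}e^{-\omega\omega_l s}$. Recall $K=e^{k\omega_l s_0}$, and to have a bound of the form $Ce^{-\lambda_l s}e^{-\kappa\omega_l s}$ (with $C$ independent of $s_0$) one needs $K^{\gamma+\omega}e^{(\kappa-\omega)\omega_l s}\le C$ for all $s\ge s_0$; the worst case $s=s_0$ gives the exponent $(k(\gamma+\omega)+\kappa-\omega)\omega_l s_0$, which must be $\le 0$, i.e.\ $\kappa\le\omega(1-k)-k\gamma$. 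Since $k$ has to be taken close to $1$ (this is forced in Lemma~\ref{lem:estimate-short-inhomogeneous} by condition \eqref{eq:90}), the right-hand side is close to $-\gamma<0$: no admissible $\kappa>0$ exists, and the inner contribution in fact blows up as $s_0\to\infty$. There is no ``half the exponent'' to save; the $\omega/2$ in the statement has a different origin. The missing ingredient is the cubic cancellation of the nonlinearity against the \emph{stationary profile}: near the origin \eqref{hyp-inn} gives $\psi_q(s)(y)\approx U_{\alpha/\delta}(ye^{\omega_l s})-\pi/2$, and then
\begin{equation*}
|F(\psi_q)(y)|\lesssim y^{-2}\,Q\bigl(ye^{\omega_l s}\bigr),\qquad Q(\xi)=\mathcal O(\xi^{-3\gamma})\ \text{as}\ \xi\to\infty,
\end{equation*}
because $|\sin(2U-\pi)-(2U-\pi)|\lesssim|U-\pi/2|^3\sim\xi^{-3\gamma}$. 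Using this, the weighted $L^2$ mass of $yF$ over the inner region scales like $(e^{-\omega_l s})^{d-2}\int_0^K Q(\xi)^2\xi^{d-3}d\xi$, which is bounded by $e^{-(d-2)\omega_l s}$ when $\omega<4\gamma$, or $e^{-(d-2)\omega_l s}K^{\omega-4\gamma}$ when $\omega>4\gamma$; since $\omega-4\gamma<d-2$ the residual $K$-power is small enough to absorb for $k$ close to $1$, and square-rooting produces precisely $e^{-\lambda_l s}e^{-\min(2\gamma,\omega/2)\omega_l s}$.

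On the $(n+1)$ factor: you correctly flagged that $\alpha_n\sim n^{\omega/4}$ is too big, and your guess that Cauchy--Schwarz fixes it is the paper's actual route, but it is also the \emph{organizing principle} rather than a patch. The paper first splits $|\langle F(\psi(s)),\phi_n\rangle|\le\|yF(\psi(s))\|\cdot\|\phi_n/y\|$ by Cauchy--Schwarz, then controls $\|\phi_n/y\|\le 4n/\omega+1$ via the Hardy inequality of Lemma~\ref{th:hardy} applied at $\alpha=(d-2)/2$, and only then estimates the $\phi_n$-independent quantity $\|yF(\psi(s))\|$ by the regional bounds on $F$. Doing the direct integral $\int F\phi_n\rho$ region by region, as you propose, hardwires the pointwise growth of $\phi_n$ into each piece and makes the linear-in-$n$ dependence unreachable without sharper Laguerre asymptotics; switching to the Cauchy--Schwarz/Hardy structure at the outset is what makes both the $(n+1)$ factor and the regional bookkeeping clean.
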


\begin{proof}
  The Cauchy-Schwarz inequality yields
  \begin{align*}
    \lvert\langle F(\psi_{q}( s)),\phi_{n}\rangle\rvert
 \le\left(\int_{0}^{\infty}(y F(\psi( s)))^{2}y^{d-1}e^{-\frac{y^{2}}{4}}\,dy\right)^{1/2}\left\lVert\frac{\phi_{n}}{(\cdot)}\right\rVert.
  \end{align*}
  Going back to the Hardy inequality in
  Lemma~\ref{th:hardy}, in particular, to the bound \eqref{lower0} and
  pick $\alpha=(d-2)/2$, we get
  \begin{align}
    \label{eq:88}
    \lambda_{n}\lVert\phi_{n}\rVert^{2}=\langle A\phi_{n},\phi_{n}\rangle \ge \frac{\omega}{4}\left\lVert\frac{\phi_{n}}{(\cdot)}\right\rVert^{2}-\frac{d-2}{4}\lVert\phi_{n}\rVert^{2}
  \end{align}
  (the inequality was derived for
  $\phi\in C^{\infty}_{0}(\mathbb R_{+})$ but it is easy to check that
  it also holds for the eigenvectors).  Solving \eqref{eq:88} for
  $\lVert\phi_{n}/(\cdot)\rVert$ and plugging in the explicit formulae
  $\lambda_{n}=-\frac{\gamma}{2}+n$ and
  $\gamma=\frac{d-2}{2}-\frac{\omega}{2}$ we get
  \begin{align*}
    \left\lVert\frac{\phi_{n}}{(\cdot)}\right\rVert\le\left(\frac{4n}{\omega}+1\right)
  \end{align*}
  for $\phi_{n}$ normalized to $1$.

  As for the second term, by Lemma~\ref{lem:intermediate-implications}
  and some direct computations we have
  \begin{align}
    \label{eq:18}
    \lvert F(\psi(s))(y)\rvert \lesssim \frac{1}{y^{2}}
    \begin{cases}
      Q(e^{\omega_{l} s}y) & 0\le y < Ke^{-\omega_{l} s}\\
      e^{-3\lambda_{l}}(y^{-3\gamma}+y^{6\lambda_{l}}) & Ke^{-\omega_{l} s}\le y < e^{\sigma s}\\
      1 & e^{\sigma s}<y
    \end{cases}
  \end{align}
  where $Q$ is a positive function
  \begin{align}
    \label{eq:30}
    Q(\xi)=2U_{\alpha/\delta}(\xi)-2\sin(2U_{\alpha/\delta}(\xi))=c\xi^{-6\gamma}+\mathcal O(\xi^{-6\gamma-2}).
  \end{align}
  Employing the bounds \eqref{eq:18} we are led to
  \begin{align*}
    \int_{0}^{\infty}(yF(\psi( s)))^{2}y^{d-1}e^{-\frac{y^{2}}{4}}\,dy
    \lesssim\int_{0}^{Ke^{-\omega_{l} s}}Q(e^{\omega_{l} s}y)^{2}y^{d-3}e^{-\frac{y^{2}}{4}}\,dy\\
    +e^{-6\lambda_{l} s}\int_{Ke^{-\omega_{l} s}}^{1}y^{d-3-6\gamma}e^{-\frac{y^{2}}{4}}\,dy
    +e^{-6\lambda_{l} s}\int_{1}^{e^{\sigma s}}y^{d-3+6\lambda_{l}}e^{-\frac{y^{2}}{4}}\,dy\\
    +\int_{e^{\sigma s}}^{\infty}y^{d-3}e^{-\frac{y^{2}}{4}}\,dy
    =I_{1}+I_{2}+I_{3}+I_{4}.
  \end{align*}
  where we took the liberty to split the region
  $Ke^{-\omega_{l}}\le y< e^{\sigma s}$ into two and take only the
  respective dominant parts of $(y^{-3\gamma}+y^{3\lambda_{l}})$ into
  account.  In view of the rapid decay of the weight the last integral
  tends to zero with $ s\to \infty$ faster than any exponential
  function; also thanks to the weight the integral in $I_{3}$
  converges as $ s\to\infty$ hence
  \begin{align}
    \label{eq:34}
    I_{3}\lesssim e^{-2\lambda_{l} s}e^{-4\gamma\omega_{l} s}.
  \end{align}
  The weight in the remaining $I_{1}$ and $I_{2}$ can be disregarded
  thanks to $y\le 1$; in particular for $I_{1}$ this leads to
  \begin{align*}
    I_{1}\le(e^{-\omega_{l} s})^{d-2}\int_{0}^{K}Q(\xi)^{2}\xi^{d-3}\,d\xi.
  \end{align*}
  Interestingly, this integral can either converge or diverge with
  $K\to \infty$ (cf. the asymptotics of $Q$) hence we the following
  two cases
  \begin{align*}
    I_{1}\lesssim (e^{-\omega_{l} s})^{d-2}
    \begin{cases}
      K^{\omega-4\gamma} & \omega>4\gamma\\
      1 & \omega<4\gamma
    \end{cases}
  \end{align*}
  (we left out the case $\omega=4\gamma$, when the integral diverges
  logarithmically, because it corresponds to a noninteger
  dimension).  An analogous phenomenon, although in reverse, occurs
  for $I_{2}$
  \begin{align*}
    I_{2}\lesssim e^{-6\lambda_{l} s}
    \begin{cases}
      1 & \omega>4\gamma\\
      (Ke^{-\omega_{l} s})^{\omega-4\gamma} & \omega<4\gamma.
    \end{cases}
  \end{align*}
  Combining $I_{1}$ and $I_{2}$, with $2\gamma=d-2-\omega$ and
  rearranging terms we discover that
  \begin{align*}
    I_{1}+I_{2}\lesssim e^{-2\lambda_{l} s}
    \begin{cases}
      e^{-4\gamma\omega_{l} s}(1+(Ke^{-\omega_{l} s})^{\omega-4\gamma}) & \omega>4\gamma\\
      e^{-\omega\omega_{l} s}(1+K^{\omega-4\gamma}) & \omega<4\gamma.
    \end{cases}
  \end{align*}
  In view of $1\ll K$ and $Ke^{-\omega_{l} s}\ll 1$ the terms
  containing $K$ can be safely ignored as higher order
  corrections.  We can now compare the estimates for $I_{1}$, $I_{2}$
  and $I_{3}$ to conclude that
  \begin{align*}
    \left(\int_{0}^{\infty}(y F(\psi( s)))^{2}y^{d-1}e^{-\frac{y^{2}}{4}}\,dy\right)^{1/2}\lesssim e^{-\lambda_{l} s}e^{-\min(2\gamma,\omega/2)\omega_{l} s}.
  \end{align*}
The proof is now complete.
\end{proof}

\begin{Lem}
  \label{lem:ball}
  If $P_{ s_{0}, s_{1}}(q)=0$ then there exists $\kappa>0$ such that
  \begin{align*}
    \lvert q_{n}\rvert\lesssim e^{-\lambda_{l} s_{0}}e^{-\kappa s_{0}}, \qquad n=0,\dots,l-1.
  \end{align*}
\end{Lem}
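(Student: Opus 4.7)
The plan is to derive the bound directly from the representation formula~\eqref{eq:89}, using the two preparatory estimates Lemma~\ref{lem:linear-norm} and Lemma~\ref{lem:nonlinear-norm} in succession. Starting point: since $P_{s_{0},s_{1}}(q)=0$, formula~\eqref{eq:89} writes $q_n$ as a sum of a linear piece $e^{-\lambda_{l}s_{0}}\langle\widetilde\phi_{l},\phi_{n}\rangle$ and a nonlinear integral $\int_{s_{0}}^{s_{1}}e^{\lambda_{n}(s-s_{0})}\langle F(\psi(s)),\phi_{n}\rangle\,ds$. I intend to bound these two pieces separately, exhibit a common exponential prefactor $e^{-\lambda_{l}s_{0}}$, and then extract an extra smallness factor $e^{-\kappa s_{0}}$ from each.

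For the linear piece I invoke Lemma~\ref{lem:linear-norm}, which, because $n\in\{0,\dots,l-1\}$ and in particular $n\ne l$, supplies the bound $|\langle\widetilde\phi_{l},\phi_{n}\rangle|\lesssim e^{-\kappa_{1}s_{0}}$ for some $\kappa_{1}>0$. Multiplying by the factor $e^{-\lambda_{l}s_{0}}$ in \eqref{eq:89} yields the desired shape $\lesssim e^{-\lambda_{l}s_{0}}e^{-\kappa_{1}s_{0}}$ for that term.

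For the nonlinear integral I use Lemma~\ref{lem:nonlinear-norm}, which gives $|\langle F(\psi(s)),\phi_{n}\rangle|\lesssim (n+1)e^{-\lambda_{l}s}e^{-\kappa_{2}\omega_{l}s}$ with $\kappa_{2}=\min(2\gamma,\omega/2)>0$. Substituting this into the integral produces an exponent $\lambda_{n}(s-s_{0})-\lambda_{l}s-\kappa_{2}\omega_{l}s$. The crucial point is that $n\le l-1$ and the eigenvalues $\lambda_{n}=-\gamma/2+n$ are strictly increasing, so $\lambda_{n}-\lambda_{l}\le -1<0$; combined with the extra decay $-\kappa_{2}\omega_{l}$ this makes the integrand exponentially decreasing in $s$. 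A single integration therefore gives
\begin{equation*}
\Bigl|\int_{s_{0}}^{s_{1}}e^{\lambda_{n}(s-s_{0})}\langle F(\psi(s)),\phi_{n}\rangle\,ds\Bigr|
\lesssim \frac{n+1}{\lambda_{l}-\lambda_{n}+\kappa_{2}\omega_{l}}\,e^{-\lambda_{l}s_{0}}e^{-\kappa_{2}\omega_{l}s_{0}},
\end{equation*}
uniformly in $s_{1}\ge s_{0}$.

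Adding the two contributions and choosing $\kappa:=\min(\kappa_{1},\kappa_{2}\omega_{l})>0$ yields $|q_{n}|\lesssim e^{-\lambda_{l}s_{0}}e^{-\kappa s_{0}}$ for every $n=0,\dots,l-1$, as claimed. There is no real obstacle here: the entire argument is a short bookkeeping exercise once one has \eqref{eq:89} and the two preceding lemmas. The only thing to watch carefully is the sign of the exponent $\lambda_{n}-\lambda_{l}$ in the integral; this is precisely where the strict ordering $\lambda_{0}<\lambda_{1}<\dots<\lambda_{l}$ (i.e.\ the hypothesis $n<l$) is used, and it is what prevents the integral from merely growing like the unstable modes and instead makes it inherit the full decay rate of the nonlinear term.
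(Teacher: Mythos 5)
Your proof is correct and follows essentially the same route as the paper: apply Lemma~\ref{lem:linear-norm} to the linear piece of \eqref{eq:89}, apply Lemma~\ref{lem:nonlinear-norm} to the nonlinear integral, and use $\lambda_{n}<\lambda_{l}$ (from $n\le l-1$) together with the extra decay $\kappa_{2}\omega_{l}>0$ to integrate in $s$ and recover the prefactor $e^{-\lambda_{l}s_{0}}e^{-\kappa s_{0}}$. Your explicit evaluation of the integral is in fact slightly cleaner than the paper's intermediate display, which carries a small notational slip in the prefactor (writing $e^{-\lambda_{l}s_{0}}$ where $e^{-\lambda_{n}s_{0}}$ naturally appears before integrating) and a sign typo ($\lambda_{n}-\lambda_{l}\le 1$ should read $\le -1$), but the end result is identical.
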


\begin{proof}
  Applying the results from Lemmas~\ref{lem:linear-norm} and
  \ref{lem:nonlinear-norm} to the formula \eqref{eq:89} for
  $q_{n}$ we get
  \begin{align*}
    \lvert q_{n}\rvert& \leq
e^{-\lambda_{l}s_{0}}\lvert \langle\widetilde\phi_{l},\phi_{n}\rangle\rvert+\int_{ s_{0}}^{ s_{1}}e^{-\lambda_{n}( s_{0}-s)}\lvert\langle F(\psi_{q}(s)),\phi_{n}\rangle\rvert\,ds\\
                      &\lesssim e^{-\lambda_{l}s_{0}}\left(e^{-\kappa s_{0}}+(1+n)\int_{ s_{0}}^{ s_{1}}e^{(\lambda_{n}-\lambda_{l})s}e^{-\kappa s}\,ds\right)
  \end{align*}
  (for simplicity we took $\kappa$ to be the smaller one from the two
  Lemmas).  We know that $n\le l-1$ so we can drop the $(1+n)$
  coefficient and $\lambda_{n}-\lambda_{l}\le 1$ so the integration of
  what remains from the nonlinear term gives us
  \begin{align*}
    \lvert q_{n}\rvert\lesssim e^{-\lambda_{l}s_{0}}\left(e^{-\kappa s_{0}}+e^{-s_{0}}e^{-\kappa s_{0}}\right)\lesssim e^{-\lambda_{l}s_{0}}e^{-\kappa s_{0}},
  \end{align*}
  which proves the desired estimate.
\end{proof}

\section{A priori short time estimates}
\label{sec:short-time}
From here on, we implicitly assume that $q$ is a root of
$P_{s_{0},s_{1}}(q)=0$.  In the previous section we established that
for any root of $P_{s_{0},s_{1}}(q)=0$ there holds
$
\lvert q_{n}\rvert\lesssim e^{-\lambda s_{0}}e^{-\kappa s_{0}}
$
for some $\kappa>0$.  Here we show that for any $\nu\in(0,1)$ and
$s_{0}=s_{0}(\nu)$ large enough we can produce an improved bound of
type
\begin{align*}
\lvert \psi_{q}(s)(y)+e^{- s\lambda_l}\phi_l\rvert \le \nu e^{-\lambda_{l}s}(y^{-\gamma}+y^{2\lambda_{l}}),\qquad y\in [Ke^{\omega_{l}s},e^{\sigma s}),\qquad s-s_{0}\le 1.
\end{align*}

To get started we use a variation of constants formula to write down
the left hand side of \eqref{eq:38} as
\begin{align}
  \label{eq:varconst}
  \begin{split}
    \psi_{q}(s)+e^{- s\lambda_l}\phi_l&=e^{-A(s- s_0)}\left(\psi_{0,q}(y)+e^{- s_0\lambda_l}\phi_l\right)+\int_{ s_0}^{ s}e^{-A(s-\tau)}F(\psi(\tau))d\tau\\
    &=e^{-A(s- s_0)}\widetilde\phi_l+\int_{ s_0}^{
      s}e^{-A(s-\tau)}F(\psi(\tau))d\tau.
  \end{split}
\end{align}
Evaluated at a single point $y$ this yields
\begin{align*}
  \begin{split}
    \left|\psi(y, s)+e^{- s\lambda_l}\phi_l(y)\right|&\le \left|(e^{-A(s- s_0)}\widetilde\phi_{l})(y)\right| +\int_{ s_0}^{ s}\left
      |\left(e^{-A(s-\tau)}F(\psi(\tau))\right)(y)\right |d\tau\\
    &=S_1+S_2.
  \end{split}
\end{align*}
The next step consists of showing pointwise bounds on the action
of the semigroup $e^{-(s-s_{0})A}$ on elements of $\mathcal H$.  Via a
simple change of variables we recover the explicit heat kernel
associated to $e^{-(s-s_{0})A}$ and use the results of
Muckenhoupt~\cite{Muckenhoupt1969} to
estimate the group action by a simple maximal function.

\begin{remark}
  The estimate via the maximal function in Lemma~\ref{estim-semigroup}
  below represents a trade off.  On one hand it gives a robust
  pointwise bound on $e^{-(s-s_{0})A}\psi$.  On the other hand, the
  estimate \eqref{eq:12} contains a factor
  $(e^{-\frac{1}{2}(s-s_{0})})^{-\gamma}=e^{-\lambda_{0}(s-s_{0})}$
  growing exponentially in time (this growth rate, given by the
  smallest eigenvalue of $A$, is precisely what one would expect from
  a generic function $\psi$).  For $s-s_{0}\le 1$ the exponential
  growth boils down to a constant factor but this estimate is useless
  for long times (with a curious exception of
  Lemma~\ref{initial-data-long-time} and
  Lemma~\ref{lem:estimate-short-inhomogeneous-extension}).
\end{remark}

\newcommand{\epsz}[0]{\ensuremath{\widetilde Ke^{-\omega_l s_0}}}
\newcommand{\etaz}[0]{\ensuremath{e^{\widetilde\sigma s_0}}}

\newcommand{\epsZ}[0]{\ensuremath{Ke^{-\omega_l s_0}}}
\newcommand{\etaZ}[0]{\ensuremath{e^{\sigma s_0}}}

\begin{Lem}
  \label{estim-semigroup}
  The action of the semigroup $e^{-( s- s_0)  A}$ on a
  function $\psi(y)\in \mathcal{H}$ can be bounded as
  \begin{align}
    \label{eq:12}
    |e^{-( s- s_0)  A}\psi(y)|\lesssim (ye^{-\frac{1}{2}( s- s_{0})})^{-\gamma}(M\psi)(y),
  \end{align}
  where $M\psi$ is the maximal function defined as
  \begin{align}
    \label{eq:32}
      M\psi(y)=\sup_{I\ni
      y}\frac{\int_I(|\psi(x)|x^\gamma)x^{1+\omega}e^{-\frac{x^2}{4}}dx}{\int_I
      x^{1+\omega}e^{-\frac{x^2}{4}}dx.}
  \end{align}
  with supremum taken over all subintervals $I$ of $\mathbb{R}_{+}$ that contain $y$.

Moreover, if function $|\psi (x)|x^\gamma$ is non increasing (respectively, non decreasing),
then the supremum in \eqref{eq:32} is attained by the interval $I = [0,y]$ (respectively,
$[y,\infty)$).
\end{Lem}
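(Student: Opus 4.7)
The plan is to diagonalise the semigroup $e^{-(s-s_{0})A}$ through the eigenfunction expansion of $A$, sum the resulting series via a Mehler-type identity for Laguerre polynomials to obtain an explicit integral kernel, and then invoke Muckenhoupt-type maximal bounds. First I would substitute the explicit formulas $\phi_{n}(y)=\mathcal N_{n} y^{-\gamma} L_{n}^{(\omega/2)}(y^{2}/4)$ and $\lambda_{n}=-\gamma/2+n$ from Lemma~\ref{th:extension} into
\begin{equation*}
e^{-(s-s_{0})A}\psi(y_{1})=\sum_{n\ge 0} e^{-\lambda_{n}(s-s_{0})}\langle \psi,\phi_{n}\rangle\,\phi_{n}(y_{1}).
\end{equation*}
Using $d-1=2\gamma+1+\omega$ from \eqref{eq:def-gamma}, each $y^{-\gamma}$ factor coming from $\phi_{n}$ absorbs into the measure $y^{d-1}e^{-y^{2}/4}\,dy$ to yield the simpler weight $y^{1+\omega}e^{-y^{2}/4}\,dy$, which under $z=y_{2}^{2}/4$ is, up to a constant, the canonical Laguerre measure $z^{\omega/2}e^{-z}\,dz$ with parameter $\alpha=\omega/2$.

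Setting $t=e^{-(s-s_{0})}\in(0,1]$ and noting that $\mathcal N_{n}^{2}=n!/\Gamma(n+\tfrac{\omega}{2}+1)$ is precisely the Hardy--Hille coefficient, I would apply the generating identity
\begin{equation*}
\sum_{n\ge 0}\frac{n!}{\Gamma(n+\tfrac{\omega}{2}+1)}L_{n}^{(\omega/2)}(x)L_{n}^{(\omega/2)}(z)\,t^{n}
=\frac{1}{1-t}\exp\!\left(-\frac{t(x+z)}{1-t}\right)(xzt)^{-\omega/4}I_{\omega/2}\!\left(\frac{2\sqrt{xzt}}{1-t}\right)
\end{equation*}
to sum the series into a closed-form kernel. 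The overall factor $e^{\gamma(s-s_{0})/2}$ produced by $\lambda_{0}=-\gamma/2$ combines with the factored-out $y_{1}^{-\gamma}$ to yield exactly the scaling prefactor $(y_{1}e^{-(s-s_{0})/2})^{-\gamma}$ that appears in \eqref{eq:12}. The remaining operator acts on the rescaled density $y_{2}^{\gamma}\psi(y_{2})$ against the weight $y_{2}^{1+\omega}e^{-y_{2}^{2}/4}$, and by the results of Muckenhoupt~\cite{Muckenhoupt1969} on Poisson integrals for Laguerre expansions it is dominated pointwise by the weighted Hardy--Littlewood maximal function $M\psi(y_{1})$ defined in \eqref{eq:32}. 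The main technical obstacle will be this last dominance step: controlling the Bessel factor $I_{\omega/2}$ uniformly in $t\in(0,1]$ and matching its small- and large-argument asymptotics to the size of the weight on either side of the point of concentration, so that the kernel behaves as a bona fide approximate identity in the Muckenhoupt sense.

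For the final assertion, set $g(x):=|\psi(x)|x^{\gamma}$ and $w(x):=x^{1+\omega}e^{-x^{2}/4}$, and let $\langle g\rangle_{I}$ denote the $w$-weighted average of $g$ over $I\ni y$. If $g$ is non-increasing, splitting $I=[a,b]=[a,y]\cup[y,b]$ and applying the elementary inequality $\alpha_{1}/\beta_{1}\ge \alpha_{2}/\beta_{2}\Rightarrow (\alpha_{1}+\alpha_{2})/(\beta_{1}+\beta_{2})\le \alpha_{1}/\beta_{1}$ to the pieces gives $\langle g\rangle_{[a,b]}\le \langle g\rangle_{[a,y]}$; splitting $[0,y]=[0,a]\cup[a,y]$ and applying the symmetric inequality yields $\langle g\rangle_{[a,y]}\le \langle g\rangle_{[0,y]}$. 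Hence the supremum in \eqref{eq:32} is attained on $[0,y]$. The non-decreasing case is symmetric and yields $I=[y,\infty)$.
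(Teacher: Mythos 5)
Your proposal is correct, and the main estimate follows essentially the same route as the paper: expand $\psi$ in the eigenbasis of $A$, recognize the summed kernel as the Laguerre Poisson kernel (the paper cites it by name; you usefully write out the Hardy--Hille/Mehler closed form with the Bessel factor $I_{\omega/2}$), pass to the canonical Laguerre measure via $z=y^{2}/4$, and invoke Muckenhoupt's theorem on Poisson integrals for Laguerre expansions. One clarification: the step you flag as the ``main technical obstacle'' --- controlling the Bessel factor uniformly in $t\in(0,1]$ and matching its asymptotics on the two sides of the point of concentration --- is precisely what Muckenhoupt's cited theorem already delivers; the paper (and your own argument, once you take the citation at face value) treats it as a black box, so no residual work is needed there. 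For the final monotonicity assertion your mediant-inequality argument (if $\alpha_{1}/\beta_{1}\ge\alpha_{2}/\beta_{2}$ then $(\alpha_{1}+\alpha_{2})/(\beta_{1}+\beta_{2})\le\alpha_{1}/\beta_{1}$, applied to the weighted averages on either side of $y$) is a clean, fully elementary alternative to the paper's one-line suggestion to write $I=[a,b]$ and differentiate in the endpoints; both work, and yours has the minor advantage of not presupposing any smoothness of $|\psi(x)|x^{\gamma}$.
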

\begin{proof}
  We first decompose $\psi(y)$ into eigenvectors of $A$
  \begin{align*}
    \psi(y)=\sum_{n=0}^\infty\phi_n(y)a_n,\quad a_n=\int_0^\infty\phi_n(x)\psi(x)x^{d-1}e^{-\frac{x^2}{4}}\,dx
  \end{align*}
  and then act on it with $e^{-( s- s_0)  A}$ to get
  \begin{align}
    \label{eq:111}
    e^{-( s- s_0)  A}\psi(y)&=\int_0^\infty\left(\sum_{n=0}^\infty\phi_n(x)\phi_n(y)e^{-( s- s_0)\lambda_n}\right)\psi(x)x^{d-1}e^{-\frac{x^2}{4}}\,dx.
  \end{align}
  By definition, $\phi_n$ can be written in terms of Laguerre
  polynomials, so we can rewrite the sum in parentheses as
  \begin{align}
    \label{eq:poisson-kernel}
    \begin{split}
      \sum_{n=0}^\infty\phi_n(x)\phi_n(y)e^{-( s- s_0)\lambda_n}&=2^{\omega+2}(xy)^{-\gamma}e^{( s- s_0)\frac{\gamma}{2}}\sum_{n=0}^\infty c_n^2 L_n^{(\omega/2)}\left(\frac{y^2}{4}\right)L_n^{(\omega/2)}\left(\frac{x^2}{4}\right)e^{-( s- s_0)n}\\
      &=2^{-(\omega+1)}(xy)^{-\gamma}e^{( s- s_0)\frac{\gamma}{2}}P_{\omega/2}\left(\frac{x^2}{4},\frac{y^2}{4},e^{-( s- s_0)}\right).
    \end{split}
  \end{align}
  Plugging this into \eqref{eq:111} we get
  \begin{align}
    \label{eq:40}
    \begin{split}
      e^{-( s- s_0) A}\psi(y)&=2^{-(\omega+1)}y^{-\gamma}e^{( s-
        s_0)\frac{\gamma}{2}}\int_0^\infty
      P_{\omega/2}\left(\frac{x^2}{4},\frac{y^2}{4},e^{-( s-
          s_0)}\right)\left(\psi(x)x^{-\gamma}\right)x^{d-1}e^{-\frac{x^2}{4}}\,dx\\
      &=2^{-(\omega+1)}y^{-\gamma}e^{( s-
        s_0)\frac{\gamma}{2}}\int_0^\infty
      P_{\omega/2}\left(\frac{x^2}{4},\frac{y^2}{4},e^{-( s-
          s_0)}\right)\left(\psi(x)x^{\gamma}\right)x^{\omega+1}e^{-\frac{x^2}{4}}\,dx.
    \end{split}
  \end{align}

  The positive function $P_{\omega/2}$, which replaced the sum in
  \eqref{eq:poisson-kernel}, is known as Poisson Kernel for Laguerre
  polynomials.  Muckenhoupt~\cite{Muckenhoupt1969} proved that for any
  function $f(x)\ge 0$ such that
  $f\in L^1\left(\mathbb R_+,x^{\alpha}e^{-x}\,dx\right)$, the integral
  \begin{align*}
    g(Y,z)=\int_0^\infty P_\alpha(Y,X,z)f(X)X^{\alpha}e^{-X}\,dX, \quad 0<z<1
  \end{align*}
  can be bounded by a maximal function
  \begin{align}
    \label{eq:33}
    g(Y,z)\lesssim\mathcal M f(Y),\quad \mathcal M f(Y)=\sup_{I\ni
      Y}\frac{\int_I f(X)X^\alpha e^{-X}\,dX}{\int_I
      X^\alpha e^{-X}\,dX}.
  \end{align}
  Applying \eqref{eq:33} with $\alpha=\omega/2$ to \eqref{eq:40} and
  changing variables as
  \begin{align*}
    Y=\frac{y^2}{4},\quad X=\frac{x^2}{4},\quad z=e^{-( s- s_0)},\quad f(Y)=y^{\gamma}\psi(y),
  \end{align*}
  we get
  \begin{align}
    \label{eq:60}
    |e^{-( s- s_0)  A}\psi(y)|\lesssim e^{( s- s_0)\frac{\gamma}{2}} y^{-\gamma}\cdot M\psi(y).
  \end{align}
  The new maximal operator $M$ is given by \eqref{eq:32}.
  In terms of the function $\psi$, the condition
  $f\in L^1\left(\mathbb R_+,x^{\alpha}e^{-x}\,dx\right)$, is
  equivalent to simply $|\langle\phi_{0},\psi\rangle |<\infty$, which is
  true for any $\psi\in \mathcal H$.

 The last statement of this lemma may be shown by writing the interval as $I = [a,b]$
 and differentiating with respect to $a$ or $b$.
\end{proof}

\begin{Lem}
  \label{estim-homog-term}
  For any $\nu\in(0,1)$ there exists $ s_0$ such that
  \begin{align}
    \label{eq:13}
    \lvert e^{-
      A( s- s_0)}(\psi_{0,l}+e^{-\lambda_l s_0}\phi_l)\rvert\le\nu
    e^{-\lambda_l s}(y^{-\gamma}+y^{2\lambda_l}),\qquad y\in [Ke^{-\omega_{l} s},e^{\sigma s})
  \end{align}
  for $ s-s_{0}\le1$.
\end{Lem}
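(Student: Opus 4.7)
The plan is to decompose the initial data according to its three-region definition (\ref{eq:20}) and exploit the cancellations built into the construction, separately treating the projection onto low eigenmodes and the remainder. Write
\[
\psi_{0,q}+e^{-\lambda_l s_0}\phi_l = \sum_{n=0}^{l-1}q_n\phi_n + R,
\]
where $R$ vanishes on the middle region $[\widetilde Ke^{-\omega_l s_0},e^{\widetilde\sigma s_0})$ (by the middle line of (\ref{eq:20})), equals $U_\alpha(ye^{\omega_l s_0})-\pi/2+e^{-\lambda_l s_0}\phi_l-\sum q_n\phi_n$ on the inner region, and satisfies $|R|\le\pi/2+e^{-\lambda_l s_0}|\phi_l|+\sum|q_n||\phi_n|$ on the outer region. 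By linearity of the semigroup it suffices to estimate $e^{-A(s-s_0)}\sum q_n\phi_n$ and $e^{-A(s-s_0)}R$ separately.

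The first piece is immediate: since $\phi_n$ is an eigenvector, $e^{-A(s-s_0)}\phi_n=e^{-\lambda_n(s-s_0)}\phi_n$. Invoking Lemma~\ref{lem:ball} we have $|q_n|\lesssim e^{-\lambda_l s_0}e^{-\kappa s_0}$; since $s-s_0\le 1$ the exponentials $e^{-\lambda_n(s-s_0)}$ are bounded; and since $\lambda_n<\lambda_l$, the asymptotics (\ref{eq:eigen}) give $|\phi_n(y)|\lesssim y^{-\gamma}+y^{2\lambda_l}$. Multiplying these together yields $\bigl|e^{-A(s-s_0)}\sum q_n\phi_n\bigr|\lesssim e^{-\lambda_l s}e^{-\kappa s_0}(y^{-\gamma}+y^{2\lambda_l})$, which is below $\frac{\nu}{2}e^{-\lambda_l s}(y^{-\gamma}+y^{2\lambda_l})$ once $s_0=s_0(\nu)$ is large.

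For $e^{-A(s-s_0)}R$ we apply Lemma~\ref{estim-semigroup}. Because $s-s_0\le 1$, the prefactor $(ye^{-(s-s_0)/2})^{-\gamma}$ in (\ref{eq:12}) is comparable to $y^{-\gamma}$, so the target bound reduces to $MR(y)\lesssim\nu e^{-\lambda_l s_0}(1+y^{\gamma+2\lambda_l})$ on $y\in[Ke^{-\omega_l s},e^{\sigma s})$. Here the construction pays off: the choice $\alpha=(\alpha_l/h)^{1/\gamma}$ was engineered so that the leading $y^{-\gamma}$ singularities of $U_\alpha(ye^{\omega_l s_0})-\pi/2$ and $-e^{-\lambda_l s_0}\phi_l(y)$ match, making the inner part of $R$ of lower order $\mathcal{O}\bigl(e^{-\lambda_l s_0}(ye^{\omega_l s_0})^{-\gamma+2}\bigr)$; on the outer support $[e^{\widetilde\sigma s_0},\infty)$ the Gaussian factor $e^{-x^2/4}$ in the weight of (\ref{eq:32}) forces the numerator of the maximal function to be doubly exponentially small in $s_0$.

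The main obstacle is the maximal-function estimate at the left endpoint $y=Ke^{-\omega_l s}$, where $y^{-\gamma}$ is largest and the bound is tightest. There one invokes the monotonicity addendum in Lemma~\ref{estim-semigroup} to realize the supremum on the interval $I=[0,y]$ (noting that $|R(x)|x^\gamma$ is controlled by a non-increasing envelope coming from the cancellation), and then uses the relation $K=e^{k\omega_l s_0}$ with $k<1$ together with the inner-region smallness $e^{-2\omega_l s_0}$ to extract enough additional powers of $e^{-\omega_l s_0}$ to dominate $y^{-\gamma}$ and push the prefactor below any prescribed $\nu>0$ as $s_0\to\infty$.
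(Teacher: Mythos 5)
Your decomposition $\psi_{0,q}+e^{-\lambda_l s_0}\phi_l=\sum_{n<l}q_n\phi_n+R$ and your treatment of the eigenmode sum via Lemma~\ref{lem:ball} and $e^{-A(s-s_0)}\phi_n=e^{-\lambda_n(s-s_0)}\phi_n$ coincide with the paper's. You also correctly identify the outer-region Gaussian decay and the need for the monotonicity addendum in Lemma~\ref{estim-semigroup}. However, your mechanism for controlling the inner contribution to $MR$ near $y=Ke^{-\omega_l s}$ is not the one that actually makes the argument work, and as written it has a gap.

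The paper does \emph{not} use any cancellation between $U_\alpha(ye^{\omega_l s_0})-\pi/2$ and $-e^{-\lambda_l s_0}\phi_l(y)$. Instead it uses only the crude bound $|\phi_l-\widetilde\phi_l|\lesssim y^{-\gamma}$ on the inner region, and the decisive point is the \emph{support}: $R=e^{-\lambda_l s_0}(\phi_l-\widetilde\phi_l)$ vanishes on $[\widetilde Ke^{-\omega_l s_0},e^{\widetilde\sigma s_0})$, so the inner piece $w_1$ is supported only on $[0,\widetilde Ke^{-\omega_l s_0})$, while the estimate is evaluated at $y\ge Ke^{-\omega_l s}$. Since $0<\widetilde k<k$, the ratio
\begin{equation*}
\frac{\widetilde Ke^{-\omega_l s_0}}{Ke^{-\omega_l s}}=e^{-(k-\widetilde k)\omega_l s_0}\,e^{\omega_l(s-s_0)}
\end{equation*}
tends to zero as $s_0\to\infty$ (uniformly for $s-s_0\le 1$), and this is exactly what makes $Mw_1(y)\lesssim\bigl(\widetilde Ke^{-\omega_l s_0}/(Ke^{-\omega_l s})\bigr)^{2+\omega}$ arbitrarily small. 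Your write-up never mentions $\widetilde K$ or the inequality $\widetilde k<k$; you invoke only ``$K=e^{k\omega_l s_0}$ with $k<1$'' and ``inner-region smallness $e^{-2\omega_l s_0}$,'' neither of which by itself separates the support of $w_1$ from the evaluation region. The claimed non-increasing envelope for $x^\gamma|R(x)|$ via the cancellation is also unreliable: the asymptotic expansion $U_\alpha(\xi)-\pi/2=-\alpha_l\xi^{-\gamma}+\mathcal O(\xi^{-\gamma-2})$ holds only for large $\xi$, while for $\xi=\mathcal O(1)$ (i.e.\ $y\sim e^{-\omega_l s_0}$) no cancellation occurs and one falls back on $|R|\lesssim e^{-\lambda_l s_0}y^{-\gamma}$ anyway. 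So the extra precision you are trying to extract from matching leading coefficients is not available throughout the inner region, and the argument as sketched does not close without the $\widetilde K\ll K$ gap.
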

\begin{proof}
  We start by noticing that
  \begin{align}
    \label{eq:44}
    \psi_{0,l}+e^{-\lambda_l s_0}\phi_l=\sum_{n=0}^{l-1}q_n\phi_n(y)+e^{-\lambda_l s_0}(\phi_l(y)-\widetilde\phi_l(y)).
  \end{align}
  We now use $ s- s_0\le1$ and $|\phi_n(y)|\le
  \alpha_{n}y^{-\gamma}+\beta_{n}y^{2\lambda_n}\lesssim y^{-\gamma}+y^{2\lambda_l}$ for $n\le l$ to bound the action of
  $e^{-( s_0- s)  A}$ on the first term in \eqref{eq:44}:
  \begin{align}
    \label{eq:91}
    \begin{split}
      \left\lvert e^{-( s- s_0)  A}\sum_{n=0}^{l-1}q_n\phi_n(y)\right\rvert&\le\left\lvert\sum_{n=0}^{l-1}e^{-( s- s_0)\lambda_n}q_n\phi_n(y)\right\rvert
\\
      &\lesssim (y^{-\gamma}+y^{2\lambda_l})\sum_{n=0}^{l-1}\lvert q_n\rvert\lesssim e^{-\lambda_l s_0}e^{-\kappa s_{0}}(y^{-\gamma}+y^{2\lambda_l}).
    \end{split}
  \end{align}
  In the last estimate (coming from Lemma~\ref{lem:ball}) $\kappa$ is
  a positive constant.  As long as $s$ and $s_0$ are comparable the
  estimate \eqref{eq:91} is of the type \eqref{eq:13}.

  The rest of this proof is devoted to estimating the semigroup action
  on the second part of \eqref{eq:44}, $\phi_l(y)-\widetilde\phi_l(y)$.
  Let us remind that
  \begin{align}
    \label{eq:57}
    |\phi_l(y)-\widetilde\phi_l(y)|\lesssim
    \begin{cases}
      y^{-\gamma}& \text{for } y\in(0,\epsz),\\
      0 & \text{for } y\in[\epsz,\etaz),\\
      y^{2\lambda_l} & \text{for } y\in[\etaz,\infty),
    \end{cases}
  \end{align}
  which we rewrite as
  \begin{align}
    \label{eq:115}
    |\phi_l(y)-\widetilde\phi_l(y)|\lesssim \mathbf
    1_{(0,\epsz)}(y)\cdot y^{-\gamma}+\mathbf
    1_{[\etaz,\infty)}(y)\cdot y^{2\lambda_l}=w_1(y)+w_2(y),
  \end{align}
  using an indicator function
  \begin{align*}
    \mathbf 1_S(x)=
    \begin{cases}
      1\quad\text{for } x\in S,\\
      0\quad\text{for } x\notin S.
    \end{cases}
  \end{align*}
  Because the heat kernel associated to $e^{-sA}$ is positive we
  have
  \begin{align}
    \label{eq:116}
    \begin{split}
      |e^{-( s- s_0)  A}(\phi_l-\widetilde\phi_l)(y)|\lesssim (e^{-( s- s_0)  A}w_1)(y)+(e^{-( s- s_0)  A}w_2)(y).
    \end{split}
  \end{align}
  Finally, we employ Lemma~\ref{estim-semigroup} to bound the both terms
  by the maximal functions:
  \begin{align}
    \label{eq:117}
    \begin{split}
      |e^{-( s- s_0)  A}(\phi_l-\widetilde\phi_l)(y)|&\lesssim y^{-\gamma}e^{\frac{\gamma}{2}( s- s_{0})}((Mw_1)(y)+(Mw_2)(y))
    \end{split}
  \end{align}
  with $M$ defined in \eqref{eq:32}.

  We proceed with the proof by exploiting the monotonicity of
  $r^{\gamma}w_1(r)$ and $r^{\gamma}w_2(r)$ to find an interval
  $I\ni y$, for which the supremum is attained.  Because
  $r^{\gamma}w_1(r)=\mathbf 1_{(0,\epsz)}(r)$ is non increasing, the
  interval $I\ni y$, for each $y>\epsz$, is simply $I=[0,y]$ and the maximal
  function evaluates to
  \begin{align}
    \label{eq:4}
    (Mw_1)(y)&=\frac{\int_{0}^{\epsz}r^{1+\omega}e^{-\frac{r^2}{4}}dr}{\int_{0}^{y}r^{1+\omega}e^{-\frac{r^2}{4}}dr}\lesssim \frac{(\epsz)^{2+\omega}}{\int_{0}^{y}r^{1+\omega}e^{-\frac{r^2}{4}}dr}.              \end{align}

  \begin{remark}
    At this point it is important to remind that from the definitions
    of $K$ and $\widetilde K$ we have
    $\widetilde Ke^{-\omega_{l}s_{0}}\ll Ke^{-\omega_{l}s}$ as long as
    $s-s_{0}\le 1$ and $s_{0}$ is sufficiently large; by assumption we
    are dealing with $Ke^{-\omega_{l}s}\le y$, so we always have
    $\epsz\ll y$.  By a similar argument we have
    $e^{\widetilde \sigma s_{0}}\ll e^{\sigma s}$.
  \end{remark}

  The denominator is minimized for the smallest admissible $y$, which
  by the assumption of this Lemma is $y=Ke^{-\omega_{l} s}$, hence
  \begin{align}
    \label{eq:86}
    (Mw_1)(y)\lesssim \left(\frac{\epsz}{Ke^{-\omega_{l} s}}\right)^{2+\omega}=\left(e^{\omega_{l}( s- s_{0})}e^{- s_{0}(k-\widetilde k)}\right)^{2+\omega}.
  \end{align}
  For short times, $ s\le s_0+1$, the first exponent is bounded, while
  the second exponent can be made arbitrarily small by making $ s_{0}$
  large thanks to $0<\widetilde k<k$.

  We continue with the second maximal function, for which
  $r^\gamma w_2(r)=\mathbf 1_{[e^{\widetilde\sigma
      s_0},\infty)}(r)r^{2l}$
  is non decreasing so, this time, the supremum of the maximal
  function is attained for $I=[y,\infty)$, yielding
  \begin{align}
    \label{eq:31}
   (Mw_2)(y)&=\frac{\int_{\etaz}^{\infty}r^{1+\omega+2l}e^{-\frac{r^2}{4}}dr}{\int_{y}^{\infty}r^{1+\omega}e^{-\frac{r^2}{4}}dr}.          \end{align}
  This time, the denominator is small for $y\gg 1$, leading to
  \begin{align}
    \label{eq:85}
    (Mw_2)(y)\lesssim y^{2l}\left(\eta^{2l+\omega+1}e^{-\frac{y^2}{4}(\eta^2-1)}(1+\mathcal O (\eta^{-2}))\right)
  \end{align}
  with
  $\eta=\etaz/y\ge e^{-\sigma( s- s_{0})}e^{-
    s_{0}(\widetilde\sigma-\sigma)}$.
  As in the case of the first maximal function, by keeping
  $ s- s_{0}$ bounded and by increasing $ s_0$, we can make $\eta$
  arbitrarily large, in turn making the quantity in parentheses
  arbitrarily small.  Combining \eqref{eq:31} and \eqref{eq:4} leads
  to $Mw_1(y)+Mw_2(y)\le \nu(1+y^{2l})$ with $\nu$ being arbitrarily
  small provided $s_{0}=s_{0}(\nu)$ is large enough.

  Combining \eqref{eq:117}, \eqref{eq:86} and \eqref{eq:85} we arrive
  at the sought for estimate
  \begin{align}
    \label{eq:79}
    |e^{-( s- s_0)  A}(\phi_l-\widetilde\phi_l)(y)|\lesssim \nu e^{\frac{\gamma}{2}( s- s_{0})}e^{-\lambda_{l} s_{0}}y^{-\gamma}(1+y^{2l})\lesssim \nu e^{-\lambda_{l} s}y^{-\gamma}(1+y^{2l})
  \end{align}
  under the assumption $ s- s_{0}\le 1$.
\end{proof}

Interestingly, by slightly restricting the choice of constants
$\sigma$ and $\widetilde\sigma$, the results of
Lemma~\ref{estim-homog-term} can be extended to long-times and large
$y$ in the sense of the following Lemma.

\begin{Lem}
  \label{initial-data-long-time}
  For any $\nu\in(0,1)$ there exists $ s_0$ such that
  \begin{align}
    \label{eq:2}
    \lvert e^{-  A( s- s_0)}(\psi_{0,l}+e^{-\lambda_l s_0}\phi_l)\rvert\le\nu e^{-\lambda_l s}(y^{-\gamma}+y^{2\lambda_l}),\qquad y\in [e^{(s-s_{0})/2},e^{\sigma s}),\,\, s \geq s_0 + 1
  \end{align}
  under the restriction
  \begin{subequations}
    \begin{align}
      \label{eq:assumption-sigma}
      \sigma < \frac{1}{4}\\
      \label{eq:assumption-tilde-sigma}
      \frac{\sigma}{1-2\sigma}<\widetilde\sigma < \frac{1}{2}.
    \end{align}
  \end{subequations}
\end{Lem}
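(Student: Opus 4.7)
The plan is to mimic the short-time argument of Lemma~\ref{estim-homog-term}, but to leverage the lower bound $y\ge e^{(s-s_{0})/2}$: by Lemma~\ref{estim-semigroup}, the prefactor $(ye^{-(s-s_{0})/2})^{-\gamma}$ in the pointwise semigroup bound is then $\le1$, which neutralises the otherwise ruinous exponential factor $e^{\gamma(s-s_{0})/2}$ at long times.  I would start from the same splitting
\begin{align*}
\psi_{0,l}+e^{-\lambda_{l}s_{0}}\phi_{l}=\sum_{n=0}^{l-1}q_{n}\phi_{n}+e^{-\lambda_{l}s_{0}}(\phi_{l}-\widetilde\phi_{l})
\end{align*}
and evolve the two pieces independently.

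For the first piece I would use $e^{-(s-s_{0})A}\phi_{n}=e^{-\lambda_{n}(s-s_{0})}\phi_{n}$, Lemma~\ref{lem:ball} giving $|q_{n}|\lesssim e^{-\lambda_{l}s_{0}}e^{-\kappa s_{0}}$, and the pointwise eigenfunction bound $|\phi_{n}(y)|\lesssim y^{-\gamma}+y^{2\lambda_{n}}$ from Lemma~\ref{th:extension}.  The key algebraic observation is that, for $n<l$ and $y\ge e^{(s-s_{0})/2}\ge 1$, the inequalities $y^{-\gamma}\le y^{2\lambda_{n}}$ (since $2\lambda_{n}+\gamma=2n\ge 0$) and $y^{2}\ge e^{s-s_{0}}$ together yield
\begin{align*}
y^{-\gamma}+y^{2\lambda_{n}}\lesssim y^{2\lambda_{n}}=y^{2\lambda_{l}}y^{-2(l-n)}\le y^{2\lambda_{l}}e^{-(l-n)(s-s_{0})},
\end{align*}
which, together with the identity $\lambda_{n}+(l-n)=\lambda_{l}$, collapses the $n$-th contribution to $|q_{n}|e^{-\lambda_{l}(s-s_{0})}y^{2\lambda_{l}}$.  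Summing over $n<l$ gives $l\cdot e^{-\lambda_{l}s-\kappa s_{0}}y^{2\lambda_{l}}$, which is $\le\nu e^{-\lambda_{l}s}(y^{-\gamma}+y^{2\lambda_{l}})$ for $s_{0}$ large, without any constraint on $\sigma$ or $\widetilde\sigma$.

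For the second piece I would invoke Lemma~\ref{estim-semigroup} directly.  Since $(ye^{-(s-s_{0})/2})^{-\gamma}\le 1$ on the relevant set,
\begin{align*}
|e^{-(s-s_{0})A}(\phi_{l}-\widetilde\phi_{l})(y)|\lesssim (Mw_{1})(y)+(Mw_{2})(y),
\end{align*}
where $w_{1}$ and $w_{2}$ are the two bumps from the proof of Lemma~\ref{estim-homog-term}.  The estimate for $(Mw_{1})$ is essentially identical to the short-time case: the supremum is attained on $[0,y]$ since $r^{\gamma}w_{1}(r)$ is non-increasing, and it yields $(Mw_{1})(y)\lesssim(\widetilde Ke^{-\omega_{l}s_{0}})^{2+\omega}$, small uniformly in $y$ and $s$ for $s_{0}$ large.

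The main difficulty and the raison d'\^etre of the assumptions on $\sigma,\widetilde\sigma$ is the estimate of $(Mw_{2})$.  The observation is that the domain $y\in[e^{(s-s_{0})/2},e^{\sigma s})$ is non-empty only for $s<s_{0}/(1-2\sigma)$, and on that $s$-range
\begin{align*}
y<e^{\sigma s}\le e^{\sigma s_{0}/(1-2\sigma)}<e^{\widetilde\sigma s_{0}}
\end{align*}
precisely because $\widetilde\sigma>\sigma/(1-2\sigma)$; the constraint $\sigma<1/4$ is what makes the window $(\sigma/(1-2\sigma),1/2)$ for $\widetilde\sigma$ non-empty in the first place.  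Hence $y$ stays strictly below the support $[e^{\widetilde\sigma s_{0}},\infty)$ of $w_{2}$ by a gap $e^{2\widetilde\sigma s_{0}}-y^{2}\gtrsim e^{2\widetilde\sigma s_{0}}$ for $s_{0}$ large.  Since $r^{\gamma}w_{2}(r)$ is non-decreasing, the supremum defining $(Mw_{2})(y)$ is attained on $[y,\infty)$; a direct calculation using the Gaussian tail asymptotics $\int_{a}^{\infty}r^{k}e^{-r^{2}/4}\,dr\sim 2a^{k-1}e^{-a^{2}/4}$ shows that $(Mw_{2})(y)$ carries a doubly exponential factor $e^{-(e^{2\widetilde\sigma s_{0}}-y^{2})/4}$, hence is doubly exponentially small in $s_{0}$.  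Multiplying the total by $e^{-\lambda_{l}s_{0}}$ and comparing with the right-hand side, which satisfies $\nu e^{-\lambda_{l}s}(y^{-\gamma}+y^{2\lambda_{l}})\ge\nu e^{-\lambda_{l}s_{0}}$ thanks to $y^{2\lambda_{l}}\ge e^{\lambda_{l}(s-s_{0})}$, finishes the estimate.
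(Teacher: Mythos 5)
Your proposal is correct and follows essentially the same route as the paper's own proof: the same decomposition of $\psi_{0,l}+e^{-\lambda_l s_0}\phi_l$, the same algebraic collapse $e^{-\lambda_n(s-s_0)}y^{2\lambda_n}\le e^{-\lambda_l(s-s_0)}y^{2\lambda_l}$ for the low-mode sum, the same observation that $ye^{-(s-s_0)/2}\ge1$ absorbs the $e^{\gamma(s-s_0)/2}$ factor in the semigroup bound, and the same nonemptiness argument ($s\le s_0/(1-2\sigma)$, hence $e^{\sigma s}<e^{\widetilde\sigma s_0}$ under \eqref{eq:assumption-tilde-sigma}) to control $Mw_2$. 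Your write-up is, if anything, a bit more explicit than the paper's about the role of $\sigma<1/4$ (making the $\widetilde\sigma$-window nonempty) and about where the $e^{-\lambda_l s_0}$ prefactor is finally absorbed.
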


\begin{proof}
  For the selected range of $y$ we have $y\ge 1$ so we can disregard
  the term $y^{-\gamma}$ from the right hand side of \eqref{eq:2} and
  simply show that the left hand side is bounded by
  $\nu e^{-\lambda_{l} s}y^{2\lambda_{l}}$.  For the most part of
  this proof we shall follow the steps in Lemma \ref{estim-homog-term}
  but with the assumption $1\le ye^{-( s- s_{0})/2}$ instead of
  $ s- s_{0}\le 1$.

  For example, by noticing that for $\lambda_{n}\le \lambda_{l}$
  \begin{align*}
    e^{-( s- s_{0})\lambda_{n}}y^{2\lambda_{n}}=e^{- s\lambda_{l}}y^{2\lambda_{l}}e^{ s_{0}\lambda_{l}}\left(ye^{-( s- s_{0})/2}\right)^{-2(\lambda_{l}-\lambda_{n})}\le e^{- s\lambda_{l}}y^{2\lambda_{l}}e^{ s_{0}\lambda_{l}},
  \end{align*}
  as long as $y\in [e^{(s-s_{0})/2},e^{\sigma s})$, we can modify \eqref{eq:91} in the following way:
  \begin{align*}
    \left\lvert e^{-( s- s_0)  A}\sum_{n=0}^{l-1}q_n\phi_n(y)\right\rvert
    &=\left\lvert\sum_{n=0}^{l-1}e^{-( s- s_{0})\lambda_{n}}q_{n}\phi_{n}(y)\right\rvert\\
    &\lesssim \sum_{n=0}^{l-1}|q_{n}|e^{-( s- s_{0})\lambda_{n}}y^{2\lambda_{n}}\\
    &\lesssim e^{- s\lambda_{l}}y^{2\lambda_{l}}\left(e^{ s_{0}\lambda_{l}}\sum_{n=0}^{l-1}|q_{n}|\right).
  \end{align*}
  Thanks to
  $\lvert q_{n}\rvert\lesssim e^{-\lambda_{l}s_{0}}e^{-\kappa s_{0}}$
  from Lemma~\ref{lem:ball}, the last term in parentheses can be made
  arbitrarily small by increasing $ s_{0}$.  We can use the already
  established maximal functions to produce similar bounds of the
  remaining terms.  Reusing the bound \eqref{eq:4} in the regime
  $y\in [e^{(s-s_{0})/2},e^{\sigma s})$ we get the following estimate
  for the first maximal function
  \begin{align*}
    (Mw_{1})(y)\lesssim \frac{(\widetilde Ke^{-\omega_{l} s_{0}})^{2+\omega}}{\int_{0}^{\infty}r^{1+\omega}e^{-\frac{r^2}{4}}},
  \end{align*}
  which again can be made small by the means of making $ s_{0}$
  large.

  The remaining estimate for the second maximal function requires us
  to restrict $\sigma$ and $\widetilde\sigma$.  We have, via
  \eqref{eq:85},
  \begin{align*}
    (Mw_2)(y)&\lesssim y^{2l}\left((e^{\widetilde\sigma s_{0}}/y)^{2l+\omega+1}e^{-\frac{1}{4}(e^{2\widetilde\sigma s_{0}}-y^{2})}(1+\mathcal O (\eta^{-2}))\right),
  \end{align*}
  so the smallness of the term in parentheses can only follow if
  \begin{align}
    \label{eq:29}
    y\le e^{\sigma s}\ll e^{\widetilde\sigma s_{0}}.
  \end{align}
  The region $[e^{(s-s_{0})/2},e^{\sigma s})$ is nonempty only if
  $ s- s_{0}\le 2\sigma s$, whence, in this Lemma, $ s$ is bounded by
  \begin{align}
    \label{eq:92}
     s\le \frac{ s_{0}}{1-2\sigma}.
  \end{align}
  This restriction, together with \eqref{eq:assumption-tilde-sigma},
  leads us to
  \begin{align*}
    \sigma s\le \frac{\sigma}{1-2\sigma} s_{0}<\widetilde\sigma s_{0}
  \end{align*}
  The assumption \eqref{eq:assumption-sigma} is necessary for there to
  exist a $\widetilde\sigma$ fulfilling
  \eqref{eq:assumption-tilde-sigma}.  Consequently, \eqref{eq:29}
  holds and the proof is complete.
\end{proof}

 \newcommand{\dr}[0]{\ensuremath{r^{1+\omega}e^{-\frac{r^2}{4}}dr}}

Now we turn our attention to the nonlinear term
\begin{align}
  \label{eq:42}
  \int_{ s_0}^{ s}e^{-(s-\tau) A}F(\psi(\tau))d\tau.
\end{align}
As in the previous Lemmas, we first find a suitable pointwise bound
for $|F(\psi(s))(y)|$ and then use the maximal functions to estimate
$|\left(e^{-(s-\tau) A}F(\psi(\tau))\right)(y)|$.  Our first step is
to establish a bound similar to \eqref{eq:57} but for $F(\psi(s))$.

\begin{Lem}
  \label{lem:estim-f}
  For $ s\ge s_0$, $\lambda_l>0$ and $1\ll\Gamma\le K$ we have
  \begin{align}
    \label{eq:1}
    |F(\psi( s))(y)|\lesssim e^{-\lambda_{l}  s} y^{-\gamma}
    \begin{cases}
      y^{-2}\Gamma^{\gamma}&\text{for }y<\Gamma e^{-\omega_l s}\\
      y^{-2}\Gamma^{-2\gamma}+y^{2l}e^{-(1-2\sigma)\lambda_{l} s}&\text{for }y\ge\Gamma e^{-\omega_{l} s}
    \end{cases}
  \end{align}
\end{Lem}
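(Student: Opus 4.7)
I would prove Lemma~\ref{lem:estim-f} by splitting the domain $y \geq 0$ into the three regions dictated by Lemma~\ref{lem:intermediate-implications}, further subdividing the inner region at the threshold $\Gamma e^{-\omega_l s}$. The universal ingredient is the elementary inequality $|\sin(2\psi)-2\psi| \lesssim \min(|\psi|^{3}, 1)$ valid for $|\psi|\le \pi/2$, which yields the two building-block bounds $|F(\psi)(y)| \lesssim |\psi|^{3}/y^{2}$ (cubic bound, whenever $\psi$ is small) and $|F(\psi)(y)| \lesssim 1/y^{2}$ (constant bound, always). In each region I would pick the appropriate one and combine it with the corresponding pointwise control on $\psi_q$ furnished by \eqref{hyp-inn}, \eqref{hyp-inter}, \eqref{hyp-out}.

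In the sub-inner region $y < \Gamma e^{-\omega_l s}$, where $\xi=ye^{\omega_l s} < \Gamma$, I would use only the constant bound $|F(\psi)|\lesssim 1/y^{2}$ and then absorb the factor $1$ into $\Gamma^{\gamma}e^{-\lambda_l s}/y^{\gamma}$, since $y<\Gamma e^{-\omega_l s}$ is exactly equivalent to $y^{\gamma} \le \Gamma^{\gamma}e^{-\gamma\omega_l s} = \Gamma^{\gamma}e^{-\lambda_l s}$. This gives the first line of \eqref{eq:1}. In the late-inner region $\Gamma e^{-\omega_l s} \le y < Ke^{-\omega_l s}$, the asymptotic \eqref{G4} (applied to both $U_{\alpha\delta}$ and $U_{\alpha/\delta}$ that sandwich $\psi_q+\pi/2$) yields $|\psi_q(s)(y)| \lesssim \xi^{-\gamma} = y^{-\gamma}e^{-\lambda_l s}$, which is small, so the cubic bound produces $|F(\psi)|\lesssim y^{-3\gamma-2}e^{-3\lambda_l s}$. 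Checking this against the first summand in the second line of \eqref{eq:1} reduces (after multiplying through) to the inequality $\Gamma^{2\gamma} \le \xi^{2\gamma}$, which holds by assumption.

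In the intermediate region $Ke^{-\omega_l s}\le y < e^{\sigma s}$, the bound \eqref{hyp-inter} combined with $\phi_l(y)\lesssim y^{-\gamma}+y^{2\lambda_l}$ gives $|\psi_q|\lesssim e^{-\lambda_l s}(y^{-\gamma}+y^{2\lambda_l})$, which is again small (for $s_0$ large), so the cubic bound produces
\begin{equation*}
|F(\psi)|\lesssim e^{-3\lambda_l s}\bigl(y^{-3\gamma-2}+y^{6\lambda_l-2}\bigr).
\end{equation*}
The first summand is handled exactly as in the late-inner case; the second is compared to $y^{2\lambda_l}e^{-(2-2\sigma)\lambda_l s}$ (which, using $2l-\gamma=2\lambda_l$, is what the second summand of the RHS of \eqref{eq:1} becomes after distributing $y^{-\gamma}$). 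The required inequality reduces to $y^{4\lambda_l-2}e^{-(1+2\sigma)\lambda_l s}\lesssim 1$, which holds thanks to $y<e^{\sigma s}$ and $\sigma<1/2$. Finally in the outer region $y \ge e^{\sigma s}$, one falls back on the constant bound $|F|\lesssim 1/y^{2}$ and verifies directly that it is dominated by the second summand of the RHS, exploiting the fact that $2l-\gamma=2\lambda_l>0$ makes that summand grow in $y$.

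The main bookkeeping challenge will be the intermediate-region exponent check on $y^{4\lambda_l-2}e^{-(1+2\sigma)\lambda_l s}$: one has to distinguish whether $4\lambda_l -2$ is positive or negative, using $y<e^{\sigma s}$ in the first case and $y\ge Ke^{-\omega_l s}$ in the second, and then verify that the resulting constraint is compatible with the assumption $\sigma<1/2$ (and more generally with the regime $1\ll \Gamma \le K$). The rest is straightforward case-splitting with the identities $2\omega_l\gamma=2\lambda_l$ and $2l-\gamma=2\lambda_l$ serving to convert inner variables into self-similar variables at each boundary.
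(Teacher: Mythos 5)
Your decomposition (sub-inner, late-inner, intermediate, outer), the elementary cubic/constant bounds on $F$ via $|\sin(2\psi)-2\psi|\lesssim\min(|\psi|^3,1)$, and the pointwise controls from Lemma~\ref{lem:intermediate-implications} mirror the paper's argument almost exactly, and the exponent bookkeeping you describe in the sub-inner, late-inner and intermediate regions is the same as the paper's. Your observation that one must split on the sign of $4\lambda_l-2$ in the intermediate region is also correct and, in fact, more careful than the paper: when $4\lambda_l-2\le0$, using $y\ge Ke^{-\omega_l s}$ together with $\omega_l=\lambda_l/\gamma$ and $\lambda_l=l-\gamma/2$ the exponent inequality reduces to $\gamma+2-4l\le2\sigma\gamma$, which holds for all admissible $l\ge1$ and $1<\gamma\le2$.

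Be careful, however, in the outer region $y\ge e^{\sigma s}$: the ``direct verification'' you sketch does not actually go through, and neither does the paper's. After distributing $y^{-\gamma}$, the second summand of the right-hand side of \eqref{eq:1} is $e^{-(2-2\sigma)\lambda_l s}y^{2\lambda_l}$; for this to dominate the constant bound $|F|\lesssim y^{-2}$ one needs $y^{2\lambda_l+2}\ge e^{(2-2\sigma)\lambda_l s}$, and at $y=e^{\sigma s}$ this amounts to $\sigma\ge\lambda_l/(2\lambda_l+1)$, which conflicts with the later constraint $\sigma<1/(10l)$. (The paper's displayed step at that point, $y^{-2\lambda_l-2}e^{\lambda_l s}\le e^{-(1-2\sigma)\lambda_l s-2\sigma s}$, simplifies to $2(1-2\sigma)\lambda_l\le0$, which is false.) This is harmless for the subsequent maximal-function estimates in Lemmas~\ref{lem:estimate-short-inhomogeneous} and \ref{lem:estimate-short-inhomogeneous-extension}, because the Gaussian weight $e^{-y^2/4}$ annihilates the contribution from $y\ge e^{\sigma s}$; but as a pointwise bound, \eqref{eq:1} as stated is not quite right near $y=e^{\sigma s}$, and a careful writeup should either restrict the claim to $y<e^{\sigma s}$ or keep the outer bound as $|F|\lesssim y^{-2}$. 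Apart from this soft spot, which you share with the source, your route is the paper's route.
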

\begin{proof}
  From the exact form of the nonlinear term we have
  \begin{align}
    \label{eq:17}
    F(\psi(s))(y)=\frac{d-1}{2y^{2}}|\sin(2\psi( s)(y))-(2\psi( s)(y))|\lesssim y^{-2}|\psi( s)(y)|^3.
  \end{align}
  The first part of \eqref{eq:1} follows immediately from
  \eqref{hyp-inn}, that is,
  \begin{equation*}
    \left\vert \psi(s)(y) \right\vert \le
    \left\vert U_{\alpha/\delta}(ye^{\omega_{l}s})-\frac{\pi}{2} \right\vert
  \end{equation*}
  as long as $y\le \Gamma e^{-\omega_{l}s}\le K e^{-\omega_{l}s}$, and
  from $|U_{\alpha/\delta}(\xi)-\frac{\pi}{2}|\le \frac{\pi}{2}$ for
  all $\xi\ge0$, namely
  \begin{align*}
    |F(\psi(s))(y)|\lesssim y^{-2} \left|U_{\alpha/\delta}(ye^{\omega_l s})-\frac{\pi}{2}\right|^3\lesssim y^{-2}\lesssim y^{-\gamma-2}(\Gamma e^{-\omega_{l}s})^{\gamma}=y^{-\gamma-2}e^{-\lambda_{l}s}\Gamma^{\gamma}.
  \end{align*}
  The rest of the proof is devoted to the second part of
  \eqref{eq:1}.

  For the region $\Gamma e^{-\omega_l s}\le y < K e^{-\omega_l s}$,
  for which we can use \eqref{hyp-inn} again, along with \eqref{eq:17}
  and the asymptotics of $U_{\alpha/\delta}$ (see
  Lemma~\ref{th:stationary}) we get
  \begin{align}
    \label{eq:23}
    |F(\psi( s))|&\lesssim y^{-2}\left|U_{\alpha/\delta}(e^{\omega_l s}y)-\frac{\pi}{2}\right|^3\lesssim
                     y^{-2}(ye^{\omega_l s})^{-3\gamma}.
  \end{align}
  Reorganizing the terms and exploiting
  $y^{-2\gamma}\le \left( \Gamma e^{-\omega_l s} \right)^{-2\gamma} =
 \Gamma^{2\gamma} e^{2\lambda_{\ell} s}$,
  we are lead to
  \begin{align}
    \label{eq:28}
    \begin{split}
    |F(\psi( s))|&\lesssim y^{-\gamma-2}e^{-\lambda_l s}(y^{-2\gamma}e^{-2\lambda_l s})\\
    &\le y^{-\gamma-2}e^{-\lambda_l s}(\Gamma^{-2\gamma})
    \end{split}\quad\text{for }y\in[\Gamma
      e^{-\omega_l s},Ke^{-\omega_l s})
  \end{align}
  As for the intermediate region $Ke^{-\omega_l  s}\le y< e^{\sigma
     s}$, we recall \eqref{hyp-inter} and \eqref{eq:17} to obtain
  \begin{align*}
    \begin{split}
      |F(\psi( s))|      &\lesssim y^{-2}e^{-3\lambda_l  s}(y^{-3\gamma}+y^{6\lambda_l})\\
      &=e^{-\lambda_{l} s}y^{-\gamma}(e^{-2\lambda_l  s}y^{-2\gamma-2})+e^{-\lambda_{l} s}y^{2\lambda_{l}}(e^{-2\lambda_{l} s}y^{4\lambda_l-2})\\
      &\le e^{-\lambda_{l} s}y^{-\gamma-2}(K^{-2\gamma})+e^{-\lambda_{l} s}y^{2\lambda_{l}}(e^{-2(1-2\sigma)\lambda_{l} s -2\sigma s})\\
      &\le e^{-\lambda_{l} s}y^{-\gamma-2}(\Gamma^{-2\gamma})+e^{-\lambda_{l} s}y^{2\lambda_{l}}(e^{-(1-2\sigma)\lambda_{l} s -2\sigma s)})
    \end{split}
  \end{align*}
  In the last line we replace $K$ with $\Gamma$ by means of
  $\Gamma\le K$ and we drop the '$2$' from the second summand so that
  it agrees with the following estimate for the external region.  For
  $y\ge e^{\sigma s}$, by the virtue of \eqref{hyp-out} and
  \eqref{eq:17}, we have
  \begin{align*}
    \begin{split}
      |F(\psi( s))|\lesssim y^{-2} &= e^{-\lambda_l s}y^{2\lambda_{l}}(y^{-2\lambda_{l}-2}e^{\lambda_l s})\\
      &\le e^{-\lambda_l s}y^{2\lambda_{l}}(e^{-(1-2\sigma)\lambda_{l} s-2\sigma s}),
    \end{split}
  \end{align*}
which completes the proof.
\end{proof}

\begin{Lem}
  \label{lem:estimate-short-inhomogeneous}
  For any $\nu\in(0,1)$ there exists $s_0$ such that
  \begin{align*}
    \left\lvert\int_{ s_0}^{ s}e^{-(s-\tau)A}F(\psi(\tau))ds\right\rvert\le \nu e^{- s\lambda_l}(y^{-\gamma}+y^{2\lambda_{l}}),\qquad y\in[Ke^{-\omega_{l}s},e^{\sigma s})
\, \text{ and } s- s_0\le 1.
  \end{align*}
\end{Lem}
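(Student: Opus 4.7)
The plan is to combine the pointwise decomposition of $F(\psi(\tau))$ provided by Lemma~\ref{lem:estim-f} with the pointwise semigroup bound from Lemma~\ref{estim-semigroup}, choosing the free parameter $\Gamma$ in Lemma~\ref{lem:estim-f} to balance competing error terms, and exploiting the fact that $s-s_0\le 1$ makes the $\tau$--integral trivially bounded.

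First I would fix an auxiliary scale $\Gamma$ with $1\ll\Gamma\le K$ (to be optimized as an intermediate power of $K$) and, using Lemma~\ref{lem:estim-f}, majorize $|F(\psi(\tau))|$ by $e^{-\lambda_l\tau}(g_1+g_2+g_3)(\cdot,\tau)$, where $g_1(y,\tau)=\mathbf 1_{(0,\Gamma e^{-\omega_l\tau})}(y)\,\Gamma^{\gamma}y^{-\gamma-2}$, $g_2(y,\tau)=\mathbf 1_{[\Gamma e^{-\omega_l\tau},\infty)}(y)\,\Gamma^{-2\gamma}y^{-\gamma-2}$, and $g_3(y,\tau)=e^{-(1-2\sigma)\lambda_l\tau}y^{2\lambda_l-\gamma+\gamma}$ (the last summand from the second line of \eqref{eq:1}). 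The decomposition is tailored so that $y^{\gamma}g_1$ is non-increasing, $y^{\gamma}g_3$ is non-decreasing, and $y^{\gamma}g_2$ can be further dominated by a non-increasing function; this is precisely the structural information that makes Lemma~\ref{estim-semigroup} directly applicable.

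Next I would apply Lemma~\ref{estim-semigroup} termwise. Since $s-\tau\le s-s_0\le 1$, the prefactor $(ye^{-(s-\tau)/2})^{-\gamma}=y^{-\gamma}e^{\gamma(s-\tau)/2}$ costs only a bounded constant, and the suprema in the maximal functions are realized at $I=[0,y]$ for $g_1,g_2$ and $I=[y,\infty)$ for $g_3$, by the monotonicity of $y^{\gamma}g_i$. Mimicking the computations \eqref{eq:4} and \eqref{eq:31} from the proof of Lemma~\ref{estim-homog-term}, I would obtain, for $y\in[Ke^{-\omega_l s},e^{\sigma s})$, bounds of the schematic form
\begin{align*}
  |e^{-(s-\tau)A}g_1(y)|&\lesssim y^{-\gamma}\cdot\Gamma^{\gamma}(\Gamma e^{-\omega_l\tau}/y)^{\omega},\\
  |e^{-(s-\tau)A}g_2(y)|&\lesssim y^{-\gamma}\cdot\Gamma^{-2\gamma}y^{-2},\\
  |e^{-(s-\tau)A}g_3(y)|&\lesssim y^{-\gamma}\cdot e^{-(1-2\sigma)\lambda_l\tau}\,y^{2l}.
\end{align*}
Multiplying by $e^{-\lambda_l\tau}$ and integrating in $\tau\in[s_0,s]$ just contributes a factor $\le 1$ and replaces $\tau$ by $s$ up to a bounded multiplicative constant. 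Collecting the three contributions I would therefore arrive at
\begin{align*}
  \left|\int_{s_0}^{s}e^{-(s-\tau)A}F(\psi(\tau))d\tau\right|\lesssim e^{-\lambda_l s}\bigl[y^{-\gamma}\bigl(\Gamma^{\gamma+\omega}K^{-\omega}+\Gamma^{-2\gamma}\bigr)+y^{2\lambda_l}e^{-(1-2\sigma)\lambda_l s_0}\bigr].
\end{align*}

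The conclusion then follows by a suitable choice of parameters: picking $\Gamma$ as an intermediate power $K^{\theta}$ with $0<\theta<\omega/(\gamma+\omega)$ makes both $\Gamma^{\gamma+\omega}K^{-\omega}$ and $\Gamma^{-2\gamma}$ arbitrarily small as $s_0\to\infty$ (since $K=e^{k\omega_l s_0}$), and the last factor $e^{-(1-2\sigma)\lambda_l s_0}$ is automatically small because $\sigma<1/2$. The main technical obstacle is precisely this balancing of $\Gamma$: the $g_1$ contribution forces $\Gamma$ to be not too large relative to $K$, while the $g_2$ contribution demands $\Gamma$ large; the existence of an admissible window is the same mechanism that appears in the splitting of $I_1+I_2$ in the proof of Lemma~\ref{lem:nonlinear-norm}, and it relies on $\gamma,\omega>0$.
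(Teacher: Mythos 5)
Your proposal correctly identifies the overall strategy, which is indeed the paper's approach: split $F(\psi(\tau))$ via Lemma~\ref{lem:estim-f} with the auxiliary scale $\Gamma$, apply the maximal-function bound of Lemma~\ref{estim-semigroup} termwise using the monotonicity of $y^{\gamma}g_i$, and integrate over $\tau\in[s_0,s]$ using $s-s_0\le 1$. However, there is a concrete computational gap that changes the quantitative conclusion.

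The denominator in the definition of the maximal function \eqref{eq:32} is $\int_I x^{1+\omega}e^{-x^2/4}dx$, which for $I=[0,y]$ with $y<1$ scales as $y^{\omega+2}$, \emph{not} $y^{\omega}$. Consequently your schematic bound for $g_1$ is missing a factor $y^{-2}$: it should read
\begin{equation*}
|e^{-(s-\tau)A}g_1(y)|\lesssim y^{-\gamma}\cdot\Gamma^{\gamma}\,(\Gamma e^{-\omega_l\tau})^{\omega}\,y^{-\omega-2},
\end{equation*}
and although your individual bound for $g_2$ carries the $y^{-2}$, you drop it when collecting. These $y^{-2}$ factors are not harmless: after substituting $y\ge Ke^{-\omega_l s}$ they turn into $K^{-2}e^{2\omega_l s}$, which with $K=e^{k\omega_l s_0}$ is $e^{(2-2k)\omega_l s_0}$ up to bounded factors --- a quantity that \emph{grows} with $s_0$ because $k<1$. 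With the correct factors, the $g_1$ contribution forces $\theta<\frac{k(\omega+2)-2}{k(\gamma+\omega)}$ while the $g_2$ contribution forces $\theta>\frac{1-k}{k\gamma}$, and the existence of an admissible $\theta$ requires $k>\frac{3\gamma+\omega}{3\gamma+\omega+\gamma\omega}$. Your stated window $0<\theta<\omega/(\gamma+\omega)$ comes precisely from the dropped $y^{-2}$'s (with them omitted, $\Gamma^{-2\gamma}\to 0$ for any $\theta>0$, so the lower constraint disappears entirely). While you do note verbally that ``the $g_2$ contribution demands $\Gamma$ large,'' your explicit condition does not encode this, and the crucial observation that the window is nonempty only when $k$ is sufficiently close to $1$ is missing. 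This two-sided constraint on $\theta$ and the resulting lower bound on $k$ is exactly the content of the ``condition for $k$'' paragraph in the paper's proof, and it is essential: without it, one cannot verify that a valid choice of $\Gamma$ exists.
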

\begin{proof}
  The proof uses a similar technique as the homogeneous case
  (Lemma~\ref{estim-homog-term}).  Namely, we estimate the nonlinear
  term according to Lemma~\ref{lem:estim-f} and then apply the
  Lemma~\ref{estim-semigroup}.  Let us start by rewriting the estimate
  from Lemma~\ref{lem:estim-f} as
  \begin{align*}
    |F(\psi(\tau))|\lesssim e^{-\lambda_{l} \tau}y^{-\gamma}(f_{1}(y)+f_{2}(y)+f_{3}(y))
  \end{align*}
  with
  \begin{align*}
    f_{1}(y)&=\mathbf 1_{[0,\Gamma e^{-\omega_{l}\tau})}(y)\, y^{-2}\Gamma^{\gamma},\\
    f_{2}(y)&=\mathbf 1_{[\Gamma e^{-\omega_{l}\tau},\infty)}(y)\,y^{-2}\Gamma^{-2\gamma}\\
    f_{3}(y)&=\mathbf 1_{[\Gamma e^{-\omega_{l}\tau},\infty)}(y)\,y^{2l}e^{-(1-2\sigma)\lambda_{l}\tau}.
  \end{align*}
  By means of Lemma~\ref{estim-semigroup} we have
  \begin{align}
    \label{eq:104}
    |e^{-(s-\tau) A}F(\psi(\tau))|\lesssim e^{(s-\tau)\frac{\gamma}{2}}y^{-\gamma}e^{-\lambda_{l}\tau}(Mf_{1}(y)+Mf_{2}(y)+Mf_{3}(y)),
  \end{align}
  so it is enough to show that each maximal function can be made much
  smaller than $1+y^{2l}$ with appropriate choice of $s_{0}$.  We
  shall expect that $f_{1}$ and $f_{2}$ have maximal functions that
  dominate for small $y$, while the maximal function for $f_{3}$
  dominates the large $y$ region.

  As for $Mf_{1}(y)$, the function $f_{1}$ is nonincreasing, so the
  supremum in maximal function is attained for the interval $[0,y)$,
  hence
  \begin{align*}
    Mf_{1}(y)=\Gamma^{\gamma}\frac{\int_{0}^{\Gamma e^{-\omega_{l}\tau}}r^{\omega-1}e^{-r^{2}/4}dr}{\int_{0}^{y}r^{\omega+1}e^{-r^{2}/4}dr}.
  \end{align*}
  The numerator is integrable thanks to $\omega>0$. The denominator
  is then minimized for $y=Ke^{-\omega_{l}\tau}$. Moreover, we can skip the
  exponential terms as near the origin they are of order one; these
  simplifications lead us to
  \begin{align}
    \label{eq:16}
    Mf_{1}(y)\lesssim\Gamma^{\gamma}\frac{(\Gamma e^{-\omega_{l}\tau})^{\omega}}{(Ke^{-\omega_{l}\tau})^{\omega+2}}=\Gamma^{\gamma+\omega}K^{-\omega-2}e^{2\omega_{l}\tau}=e^{\omega_{l}s_{0}(k{\theta}(\gamma+\omega)-k(\omega+2)+2)}e^{2\omega_{l}(\tau-s_{0})},
  \end{align}
  where, for convenience, we defined $\Gamma$ as a power of $K$:
  \begin{align*}
    \Gamma:=K^{\theta},\qquad \theta \in(0,1)
  \end{align*}
  and used the definition $K=e^{k\omega_{l}s_{0}}$.  The last term in
  \eqref{eq:16} decays exponentially with $s_{0}$ if we pick $\theta$
  such that
  \begin{align}
    \label{eq:96}
    \theta < \frac{k(\omega+2)-2}{k(\gamma+\omega)}.
  \end{align}

  In a similar fashion we estimate $Mf_{2}(y)$, although this time, we
  extend the support of $f_{2}$ by dropping the indicator function
  (i.e. exploiting $\mathbf 1_S\le 1$).  Because $f_{2}$ without the
  indicator function is nonincreasing, the explicit interval for the
  maximal function is $[0,y)$ and we have
  \begin{equation*}
    Mf_{2}(y)\lesssim \Gamma^{-2\gamma}\frac{\int_{0}^{y}r^{\omega-1}e^{-r^{2}/4}dr}{\int_{0}^{y}r^{\omega+1}e^{-r^{2}/4}dr}.
  \end{equation*}
  As expected, the maximal function attains its maximum at
  $y=Ke^{-\omega_{l}s}$, so once again we can drop the exponential
  factors and reduce $Mf_{2}$ to two simple integrals
  \begin{equation*}
    Mf_{2}(y)\lesssim \Gamma^{-2\gamma}\frac{(Ke^{-\omega_{l}s})^{\omega}}{(Ke^{-\omega_{l}s})^{\omega+2}}=e^{2\omega_{l}\tau_{0}(1-k-k\gamma\theta)}e^{2\omega_{l}(s-\tau_{0})}.
  \end{equation*}
  This time, for the right hand side to be decreasing to $0$ with $\tau_{0}$,
  we must have
  \begin{align}
    \label{eq:90}
    \frac{(1-k)}{k\gamma}<\theta
  \end{align}
  but at the same time condition \eqref{eq:96} must be satisfied as
  well, leading to the following condition for $k$:
  \begin{align*}
    \frac{(1-k)}{k\gamma} < \frac{k(\omega+2)-2}{k(\gamma+\omega)},
  \end{align*}
  or equivalently
  \begin{align*}
    k_{0}:=\frac{3\gamma+\omega}{3\gamma+\omega+\gamma\omega} < k < 1.
  \end{align*}
  It is easy to see that $k_{0}<1$ (that is, if $\gamma,\omega>0$,
  which is always true for $d>4+2\sqrt{2}$), so it is always possible
  to choose such $k$.

  The last term, $Mf_{3}$, is the easiest one to control.
  Since $f_{3}$ is an increasing function, its maximal function reduces to
  \begin{align*}
    Mf_{3}(y)=e^{-(1-2\sigma)\lambda_{l}\tau}\frac{\int_{y}^{\infty}r^{2l+1+\omega}e^{-r^{2}/4}dr}{\int_{y}^{\infty}r^{1+\omega}e^{-r^{2}/4}dr}.
  \end{align*}
  By standard results for gamma functions the above ratio can be
  bounded by
  \begin{align*}
    Mf_{3}(y)\lesssim e^{-(1-2\sigma)\lambda_{l}\tau}(1+y^{2l}),
  \end{align*}
  which already has a right decay with $\tau$ (or equivalently with
  $s_{0}$).

  It now remains to integrate the formula \eqref{eq:104} over
  $\tau\in[s_{0},s]$, taking into account $s-s_{0}\le 1$, to arrive at
  the thesis of this lemma.
\end{proof}

We follow up with the extension of Lemma
\ref{lem:estimate-short-inhomogeneous} to long times.

\begin{Lem}
  \label{lem:estimate-short-inhomogeneous-extension}
  For any $\nu\in(0,1)$ there exists $s_0$
  such that
  \begin{align*}
    \left\lvert\int_{s_0}^{s}e^{-(s-\tau) A}F(\psi(\tau))ds\right\rvert\le \nu e^{-s\lambda_l}y^{-\gamma},\qquad y\in [e^{(s-s_{0})/2},e^{\sigma s}), \,\,
s \geq s_0 + 1
  \end{align*}
  provided that
  \begin{align*}
    \sigma<\frac{1}{10l}.
  \end{align*}
\end{Lem}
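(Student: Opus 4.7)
The plan is to follow the decomposition-and-maximal-function framework of Lemma~\ref{lem:estimate-short-inhomogeneous} while carefully tracking the semigroup prefactor $e^{\gamma(s-\tau)/2}$, which is no longer tamed by the hypothesis $s-s_0\le 1$. By Lemma~\ref{lem:estim-f} I write
\[
|F(\psi(\tau))(y)| \lesssim e^{-\lambda_l\tau}\,y^{-\gamma}\bigl(f_1(y)+f_2(y)+f_3(y)\bigr)
\]
with the $f_i$ defined exactly as in the proof of Lemma~\ref{lem:estimate-short-inhomogeneous}, and apply Lemma~\ref{estim-semigroup} to obtain
\[
|e^{-(s-\tau)A}F(\psi(\tau))|(y) \lesssim y^{-\gamma}e^{\gamma(s-\tau)/2}e^{-\lambda_l\tau}\bigl(Mf_1(y)+Mf_2(y)+Mf_3(y)\bigr).
\]
It then suffices to estimate each $Mf_i$ on $[e^{(s-s_0)/2},e^{\sigma s})$ and to integrate in $\tau$ over $[s_0,s]$.

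A key preliminary, inherited from the analogous long-time extension of the homogeneous term in Lemma~\ref{initial-data-long-time}, is that the range $[e^{(s-s_0)/2},e^{\sigma s})$ is nonempty only when $s\le s_0/(1-2\sigma)$. Thus $s$ is comparable to $s_0$ and every exponential in $s$ can be reexpressed as an exponential in $s_0$. Using the monotonicity statement in Lemma~\ref{estim-semigroup}, $Mf_1$ and $Mf_2$ are evaluated on intervals of the form $[0,y]$; since in our range $y$ is already much larger than the supports of $f_1$ and $f_2$, their contributions follow exactly the pattern of the short-time proof and yield decay in $s_0$ under the constraints on $k$ and $\theta$ already fixed there. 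The maximal function of the increasing $f_3$ is evaluated on $[y,\infty)$, and the standard incomplete-gamma asymptotics give $Mf_3(y)\lesssim e^{-(1-2\sigma)\lambda_l\tau}\,y^{2l}$.

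The hard part is the $f_3$ contribution, where the polynomial growth $y^{2l}\le e^{2\sigma l s}$ competes directly with the desired decay $e^{-s\lambda_l}$. After the substitution $u=s-\tau$, the integral
\[
\int_{s_0}^{s} e^{\gamma(s-\tau)/2}e^{-\lambda_l\tau}\,Mf_3(y)\,d\tau \lesssim y^{2l}\int_0^{s-s_0} e^{\gamma u/2}e^{-(2-2\sigma)\lambda_l(s-u)}\,du
\]
is dominated by its upper endpoint and produces $y^{2l}\,e^{\gamma(s-s_0)/2-(2-2\sigma)\lambda_l s_0}$. Inserting $y^{2l}\le e^{2\sigma l s}$ together with $s\le s_0/(1-2\sigma)$ reduces the claim to an explicit algebraic inequality in $\sigma$, $l$, $\gamma$, and $\lambda_l=l-\gamma/2$; the condition $\sigma<1/(10l)$ is the clean sufficient bound that makes the net coefficient of $s_0$ in the exponent strictly negative. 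Choosing $s_0=s_0(\nu)$ large enough then supplies the arbitrary smallness factor $\nu$, completing the argument.
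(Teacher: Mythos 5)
Your overall framework — decompose $F(\psi)$ via Lemma~\ref{lem:estim-f}, push $e^{-(s-\tau)A}$ through Lemma~\ref{estim-semigroup}, estimate the three maximal functions separately, and integrate over $\tau$ — is exactly the paper's. But you are missing the paper's one genuinely new idea for the long-time regime, and you leave the decisive algebraic step unverified, which together constitute a real gap.

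The paper's first move is to observe that on the stated range $y\ge e^{(s-s_0)/2}$ one has $y^{-\gamma}\le y^{2\lambda_l}e^{-l(s-s_0)}$, and to insert this into the semigroup bound \emph{before} integrating. Since $\lambda_l=l-\gamma/2$, this converts
$e^{\gamma(s-\tau)/2}y^{-\gamma}e^{-\lambda_l\tau}$ identically into $e^{-\lambda_l s}y^{2\lambda_l}e^{-l(\tau-s_0)}$,
so the dangerous exponential growth factor $e^{\gamma(s-\tau)/2}$ is eliminated at the outset, the $\tau$-integral trivially converges, and what remains is to show $Mf_i\lesssim e^{-\kappa s_0}$. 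You skip this normalization, carry $e^{\gamma(s-\tau)/2}$ through the integral, and then try to absorb the resulting $e^{\gamma(s-s_0)/2}$ by the constraint $s\le s_0/(1-2\sigma)$. This is not wrong in principle, but two things go off the rails. First, you quote the same target $y^{-\gamma}$ as the lemma statement while the paper's argument actually establishes the weaker $y^{2\lambda_l}$ (which, since $y\ge 1$, dominates $y^{-\gamma}$), so you are silently trying to prove a strictly stronger estimate. Second, the resulting ``explicit algebraic inequality'' that you claim $\sigma<1/(10l)$ satisfies is never written out, and in fact it is not a purely $(\sigma,l)$-inequality: carrying your exponents through and using $s\le s_0/(1-2\sigma)$ leaves the coefficient
$\tfrac{(1+2\sigma)l}{1-2\sigma}-\tfrac{\gamma}{2}-(2-2\sigma)\lambda_l$
of $s_0$, i.e. $-\lambda_l+\tfrac{4\sigma l}{1-2\sigma}+2\sigma\lambda_l$. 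Negativity requires $\tfrac{4\sigma l}{1-2\sigma}<(1-2\sigma)\lambda_l$, which depends explicitly on $\lambda_l$, and $\sigma<1/(10l)$ alone does \emph{not} imply it whenever $\lambda_l$ is small (for $d=8$, $l=1$ one has $\lambda_1\approx 0.207$, and the inequality fails for $\sigma$ near $1/10$). You therefore need to either adopt the paper's $y^{-\gamma}\le y^{2\lambda_l}e^{-l(s-s_0)}$ normalization (which reduces the requirement to $\tfrac{2\sigma l}{1-2\sigma}<(1-2\sigma)\lambda_l$, a factor of two better) or derive the correct dimension-dependent constraint on $\sigma$, and in either case you must actually carry the computation to completion instead of asserting it. Finally, a minor inaccuracy: the $f_1,f_2$ contributions do not ``follow exactly the pattern of the short-time proof''; the point in the long-time case is that $y\ge 1$ bounds the denominators in the maximal-function quotients from below by a constant, so the numerators alone control $Mf_1,Mf_2$, which is easier, not identical, and the only residual constraint ($\theta<\omega/(k(\gamma+\omega))$) is implied by the short-time one.
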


\begin{proof}
  The proof follows from the same ideas as of
  Lemma~\ref{initial-data-long-time} by using estimates already
  established in Lemma~\ref{lem:estimate-short-inhomogeneous}.  We
  first note that for $y\ge e^{(s-s_{0})/2}$ we have
  $y^{-\gamma}\le y^{2\lambda_{l}}e^{-l(s-s_{0})}$ and combine it with
  \eqref{eq:104} to get
  \begin{align*}
    |e^{-(s-\tau) A}F(\psi(\tau))|&\lesssim e^{(s-\tau)\frac{\gamma}{2}}y^{-\gamma}e^{-\lambda_{l}\tau}(Mf_{1}(y)+Mf_{2}(y)+Mf_{3}(y))\\
    &\le e^{-\lambda_{l}s}y^{2\lambda_{l}}e^{-l(\tau-s_{0})}(Mf_{1}(y)+Mf_{2}(y)+Mf_{3}(y)).
  \end{align*}
  We start with
  \begin{align*}
    Mf_{1}(y)&=\Gamma^{\gamma}\frac{\int_{0}^{\Gamma e^{-\omega_{l}\tau}}r^{\omega-1}e^{-r^{2}/4}dr}{\int_{0}^{y}r^{\omega+1}e^{-r^{2}/4}dr}\\
    &\lesssim \Gamma^{\gamma}\int_{0}^{\Gamma e^{-\omega_{l}\tau}}r^{\omega-1}e^{-r^{2}/4}dr
  \end{align*}
  as long as $y\ge e^{(s-s_{0})/2}\ge 1$.  As before, we skip the
  exponential factor and approximate the last integral by
  $(\Gamma e^{-\omega_{l}\tau})^{\omega}$ to arrive at
  \begin{align*}
    Mf_{1}(y) \lesssim \Gamma^{\omega+\gamma}e^{-\omega\omega_{l}\tau}\le\Gamma^{\omega+\gamma}e^{-\omega\omega_{l}s_{0}}=e^{\omega_{l}s_{0}(k\theta(\gamma+\omega)-\omega)},
  \end{align*}
  which decays with $s_{0}$ as long as
  \begin{align*}
    \theta<\frac{\omega}{k(\gamma+\omega)}.
  \end{align*}
  This new condition for $\theta$ is superfluous to \eqref{eq:96}
  where we already restricted $\theta$ by
  \begin{align*}
    \theta<\frac{k(\omega+2)-2}{k(\gamma+\omega)}<\frac{\omega}{k(\gamma+\omega)}.
  \end{align*}
  Next comes $Mf_{2}$, for which we have
  \begin{align*}
    Mf_{2}(y)&\lesssim \Gamma^{-2\gamma}\frac{\int_{0}^{y}r^{\omega-1}e^{-r^{2}/4}dr}{\int_{0}^{y}r^{\omega+1}e^{-r^{2}/4}dr}\\
    &\lesssim \Gamma^{-2\gamma}(1+\mathcal O(y^{-2}))=e^{-2\gamma\omega_{l}k\theta}(1+\mathcal O(y^{-2}))
  \end{align*}
  as long as $y\ge e^{(s-s_{0})/2}\ge 1$ and the decay with $s_{0}$ is
  straight forward.  For the last maximal function we get
  \begin{align*}
    Mf_{3}(y)&=e^{-(1-2\sigma)\lambda_{l}s}\frac{\int_{y}^{\infty}r^{2l+1+\omega}e^{-r^{2}/4}dr}{\int_{y}^{\infty}r^{1+\omega}e^{-r^{2}/4}dr}\\
    &\lesssim y^{2l}e^{-(1-2\sigma)\lambda_{l}s}
 \le y^{2l}e^{-(1-2\sigma) \lambda_{l} s_{0}}\le e^{2l\sigma s}e^{-(1-2\sigma) \lambda_{l} s_{0}}.
  \end{align*}
  At this point we reuse \eqref{eq:90} from
  Lemma~\ref{initial-data-long-time}, which puts a limit on $s$ in
  terms of $s_{0}$ to get
  \begin{align*}
    Mf_{3}(y)\lesssim e^{\left(\frac{2l\sigma}{1-2\sigma}-(1-2\sigma)\right)s_{0}}.
  \end{align*}
  At this point, we have to impose an additional condition on
  $\sigma$, namely $Mf_{3}$ decays with $s_{0}$ if
  \begin{align*}
    0<\sigma <\frac{1}{10l}.
  \end{align*}
  Again, this is compatible with the assumption $\sigma<\frac{1}{4}$
  we made in Lemma~\ref{initial-data-long-time}, in fact the latter
  assumption is superfluous because
  $0<\sigma <\frac{1}{10l} < \frac{1}{4}$ for all $l\ge1$.
\end{proof}

\section{A priori long time estimates}
\label{sec:long-time}
\begin{Lem}
  \label{lem:long-bounded}
  If $\psi\in\mathcal W_{ s_{0}, s_{1}}^{1}$ and
  $P_{ s_{0}, s_{1}}(q_{n})=0$ with $n = 0,1,...,\ell -1$, then for any $R>1$
there exists $\kappa>0$ such that
  \begin{align*}
    \lvert\psi( s)(y)+e^{-\lambda_{l} s}\phi_{l}(y)\rvert \lesssim C(R)e^{-\kappa s_{0}}e^{-\lambda_{l} s}y^{-\gamma}
  \end{align*}
  for $Ke^{-\omega_{l} s}< y\le R$ and $s> s_{0}+1$.
\end{Lem}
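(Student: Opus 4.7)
The plan is to control the remainder $\Psi(s) := \psi_q(s) + e^{-\lambda_l s}\phi_l$ through the spectral decomposition of $A$, leveraging the root condition $P_{s_0,s_1}(q)=0$ on the low modes and the closeness of $\widetilde\phi_l$ to $\phi_l$ (Lemma~\ref{lem:linear-norm}) on the high modes. From Corollary~\ref{th:duhamel}, taking inner products with $\phi_n$ in
\[
\psi_q(s) = e^{-A(s-s_0)}\psi_{0,q} + \int_{s_0}^s e^{-A(s-\tau)}F(\psi_q(\tau))\,d\tau,
\]
combined with the uniform bound $|\langle F(\psi_q(\tau)),\phi_n\rangle|\lesssim(n+1)e^{-\lambda_l\tau}e^{-\kappa\omega_l\tau}$ from Lemma~\ref{lem:nonlinear-norm}, is the starting point.

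For low modes $n=0,\dots,l-1$ I reuse the backward representation that follows from $P_{s_0,s_1}(q)=0$, as in the derivation leading to equation~\eqref{eq:89}:
\[
\langle\psi_q(s),\phi_n\rangle = -\int_s^{s_1}e^{\lambda_n(\tau-s)}\langle F(\psi_q(\tau)),\phi_n\rangle\,d\tau.
\]
Since $\lambda_n - \lambda_l = n-l < 0$, plugging in the nonlinear bound and integrating gives $|\langle\psi_q(s),\phi_n\rangle|\lesssim e^{-\lambda_l s}e^{-\kappa\omega_l s_0}$. For modes $n\ge l$, the identity $\psi_{0,q}=\sum_{m<l}q_m\phi_m - e^{-\lambda_l s_0}\widetilde\phi_l$ (a direct consequence of the definition of $\widetilde\phi_l$) yields $\langle\Psi(s_0),\phi_n\rangle = e^{-\lambda_l s_0}(\delta_{n,l}-\langle\widetilde\phi_l,\phi_n\rangle)$, which is bounded by $e^{-\lambda_l s_0}e^{-\kappa s_0}$ via Lemma~\ref{lem:linear-norm}. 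Propagating this smallness through the semigroup and adding the Duhamel integral, while using $\lambda_n\ge\lambda_l>0$ and $s-s_0\ge 1$ to extract both time decay and exponential decay in $n$, produces a per-mode estimate of the form $|\langle\Psi(s),\phi_n\rangle|\lesssim e^{-\lambda_l s}e^{-\kappa s_0}\bigl(e^{-(n-l)(s-s_0)} + (n+1)/(n-l+1)\bigr)$.

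Finally, I would assemble $\Psi(s)(y) = \sum_n\langle\Psi(s),\phi_n\rangle\phi_n(y)$ using the asymptotics $|\phi_n(y)|\lesssim C(R)(1+n)^{\omega/4}y^{-\gamma}$ from Lemma~\ref{th:extension}, valid uniformly on $y\in(Ke^{-\omega_l s},R]$ (the $O(y^{-\gamma+2})$ correction being absorbed in $C(R)$ because $y\le R$). The main obstacle is the convergence of this series: the exponential factor $e^{-(n-l)(s-s_0)}$ from the initial-data part easily absorbs any polynomial growth in $n$, but the contribution of the nonlinear integral only decays as $1/n$ per mode, which is insufficient against the $(1+n)^{\omega/4}$ factor from the eigenfunction pointwise bound. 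Closing the argument will therefore either require a sharper bound on $|\langle F(\psi_q(\tau)),\phi_n\rangle|$, exploiting the explicit $y$-range cutoffs of Lemma~\ref{lem:estim-f} together with Laguerre orthogonality, or alternatively splitting the sum at some $n_*\sim s-s_0$ and controlling the tail via the global $L^2$ bound on $\Psi(s)$ together with the a~priori confinement furnished by Lemma~\ref{lem:intermediate-implications}. Once the series is summed, the stated estimate $C(R)e^{-\kappa s_0}e^{-\lambda_l s}y^{-\gamma}$ emerges.
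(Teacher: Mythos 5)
Your framework matches the paper's: spectral decomposition of $\Psi(s)=\psi_q(s)+e^{-\lambda_l s}\phi_l$, backward Duhamel for the low modes $n<l$ coming from $P_{s_0,s_1}(q)=0$, forward Duhamel plus Lemma~\ref{lem:linear-norm} for the high modes, and then summation against the pointwise eigenfunction asymptotics $\lvert\phi_n(y)\rvert\lesssim(1+n)^{\omega/4}y^{-\gamma}$ for $y\le R$. You also correctly diagnose the obstacle: the Duhamel integral contributes per-mode terms that are only $O(1)$ in $n$ (the factor $(n+1)/(n-l+1)$ you exhibit), which is not nearly enough to beat the $(1+n)^{\omega/4}$ eigenfunction growth, so the mode-sum diverges. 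So far so good.

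What is missing is the specific mechanism the paper uses to gain superpolynomial decay in $n$, and neither of the two fixes you propose is it. The paper splits the \emph{time} integral at $\tau=s-1$:
\begin{align*}
  \int_{s_0}^{s}e^{-A(s-\tau)}F(\psi(\tau))\,d\tau
  =\int_{s_0}^{s-1}(\cdots)\,d\tau+\int_{s-1}^{s}(\cdots)\,d\tau,
\end{align*}
handles the second piece $\mathcal I$ by the short-time maximal-function estimates of Lemma~\ref{lem:estimate-short-inhomogeneous} (applicable precisely because the relevant propagation time $s-\tau\le1$), and only expands the first piece in eigenfunctions. Truncating at $s-1$ rather than $s$ means the innermost integral produces
\begin{align*}
  \int_{s_0}^{s-1}e^{-\lambda_n(s-\tau)}e^{-(\lambda_l+\kappa)\tau}\,d\tau
  \lesssim\frac{e^{-\lambda_l s}e^{-\kappa s}\,e^{-(\lambda_n-\lambda_l)}}{\lambda_n-\lambda_l-\kappa},
\end{align*}
and the extra factor $e^{-(\lambda_n-\lambda_l)}\sim e^{-(n-l)}$ is exactly what makes $\sum_{n>l}\alpha_n\lambda_n\,e^{-(\lambda_n-\lambda_l)}/(\lambda_n-\lambda_l-\kappa)$ converge despite the algebraic growth of $\alpha_n\sim n^{\omega/4}$. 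The same factor appears in the high-mode initial-data term \eqref{eq:8} via $e^{-(\lambda_n-\lambda_l)(s-s_0)}\le e^{-(\lambda_n-\lambda_l)}$ once $s-s_0\ge1$. Your suggested alternatives do not recover this: sharpening $\lvert\langle F,\phi_n\rangle\rvert$ by Laguerre orthogonality would at best gain algebraic decay in $n$, which still must be compared against $\alpha_n$, and splitting the mode-sum at $n_*\sim s-s_0$ fails uniformly as $s\downarrow s_0+1$ (where $n_*\sim1$ leaves the whole tail uncontrolled). You should replace those suggestions with the time-splitting at $s-1$ to close the argument.

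Two smaller remarks. First, your low-mode backward formula should run over $[\,s-1,s_1]$ rather than $[s,s_1]$ once the $\mathcal I$ piece is subtracted off, matching the paper's display \eqref{eq:5}; the resulting estimate is the same. Second, your per-mode bound is stated as ``decays as $1/n$'' but the displayed factor $(n+1)/(n-l+1)$ is $O(1)$, not $O(1/n)$; the divergence conclusion is unaffected, but the phrasing is inconsistent.
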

\begin{proof}
  First, let us remind that $\psi( s)+e^{-\lambda_{l} s}\phi_{l}$ solves
  \begin{align}
    \label{eq:3}
    \psi( s)+e^{-\lambda_{l} s}\phi_{l}=e^{- A( s- s_{0})}(\psi_{0}+e^{-\lambda_{l} s_{0}}\phi_{l})+\int_{ s_{0}}^{ s}e^{- A( s-\tau)}f(\psi(\tau))\,d\tau.
  \end{align}
  It is convenient to divide the nonlinear part into the following two integrals:
  \begin{align*}
    \int_{ s_{0}}^{ s}e^{- A( s-\tau)}f(\psi(\tau))\,d\tau=\left(\int_{ s_{0}}^{ s-1}+\int_{ s-1}^{ s}\right)e^{- A( s-\tau)}f(\psi(\tau))\,d\tau
  \end{align*}
  By arguments similar to ones used for the short-time estimates (see
  the proof of Lemma~\ref{lem:estimate-short-inhomogeneous}), the
  integral over $[ s-1, s]$, denoted from now on as $\mathcal I$,
  already fulfills the proposed bounds, so we shall only consider the
  remaining integral over $[ s_{0}, s-1]$.
  Let us now rewrite the right hand side of \eqref{eq:3} in terms of
  projections onto eigenfunctions
  \begin{align*}
    \psi( s)+e^{-\lambda_{l} s}\phi_{l}=\sum_{n=0}^{\infty}\phi_{n}\left(e^{-\lambda_{n}( s- s_{0})}\langle\psi_{0}+e^{-\lambda_{l} s_{0}}\phi_{l},\phi_{n}\rangle+\int_{ s_{0}}^{s-1}e^{-\lambda_{n}(s-\tau)}\langle f(\psi(\tau)),\phi_{n}\rangle d\tau \right)+\mathcal I.
  \end{align*}
  Exploiting the particular form of $\psi_{0}$ and the fact that
  $P_{ s_{0}, s_{1}}(q_{n})=0$, we get
  \begin{subequations}
    \label{eq:41}
    \begin{align}
      \label{eq:5}
      \psi( s)+e^{-\lambda_{l} s}\phi_{l}&=-\sum_{n=0}^{l-1}\phi_{n}\int_{ s-1}^{ s_{1}}e^{-\lambda_{n}(s-\tau)}\langle f(\psi(\tau)),\phi_{n}\rangle\,d\tau\\
      \label{eq:45}
                                         &-\phi_{l}e^{-\lambda_{l} s}\langle\widetilde\phi_{l}-\phi_{l},\phi_{l}\rangle\\
      \label{eq:8}
                                         &+\sum_{n=l+1}^{\infty}\phi_{n}e^{-\lambda_{n}( s- s_{0})}e^{-\lambda_{l} s_{0}}\langle\widetilde\phi_{l}-\phi_{l},\phi_{n}\rangle\\
      \label{eq:10}
                                         &+\sum_{n=l+1}^{\infty}\phi_{n}\int_{ s_{0}}^{ s-1}e^{-\lambda_{n}(s-\tau)}\langle f(\psi(\tau)),\phi_{n}\rangle\,d\tau\\
      \label{eq:9}
                                         &+\mathcal I.
    \end{align}
  \end{subequations}
  Lemmas~\ref{lem:nonlinear-norm} and \ref{lem:linear-norm}, along
  with the asymptotic behavior for eigenfunctions from
  Lemma~\ref{th:extension} imply the appropriate bounds for the terms
  \eqref{eq:5} and \eqref{eq:45}; for example there holds
  \begin{align*}
    \left\vert \phi_{n}\int_{ s-1}^{ s_{1}}e^{-\lambda_{n}(s-\tau)}\langle f(\psi(\tau)),\phi_{n}\rangle\,d\tau\right\vert
& \lesssim e^{-\lambda_{l}s}y^{-\gamma}\alpha_{n}\int_{ s-1}^{ s_{1}}e^{-(\lambda_{l}-\lambda_{n})(\tau-s)}e^{-\kappa \tau}\,d\tau \\
& \lesssim e^{-\kappa s}e^{-\lambda_{l} s}y^{-\gamma}\alpha_{n}e^{(\lambda_{l}-\lambda_{n})},
  \end{align*}
  where we have used the fact that $\lambda_{l}-\lambda_{n}>0$ for
  $n\le l-1$.
 The terms depending on $n$ can be safely bounded by a
  single constant because the summation is over a finite range of $n$.

  The third and fourth terms, \eqref{eq:8} and \eqref{eq:10}, must be
  treated more carefuly---since the summation goes to infinity and
  thus an inefficient bound on the projections may simply
  diverge.  As we shall see, the results proved in
  Lemmata~\ref{lem:nonlinear-norm} and \ref{lem:linear-norm} are
  sufficient as they lead to
  \begin{equation}
    \label{eq:35}
    \begin{split}
      \lvert\eqref{eq:8}\rvert&\lesssim y^{-\gamma}\sum_{n=l+1}^{\infty}\alpha_{n}e^{-\lambda_{n}( s- s_{0})}e^{-\lambda_{l} s_{0}}e^{-\kappa s_{0}}\\
      &= e^{-\kappa s_{0}}e^{-\lambda_{l} s}y^{-\gamma}\sum_{n=l+1}^{\infty}\alpha_{n}e^{-(\lambda_{n}-\lambda_{l})( s- s_{0})}
    \end{split}
  \end{equation}
  and
  \begin{equation}
    \label{eq:6}
    \begin{split}
      \lvert\eqref{eq:10}\rvert&\lesssim y^{-\gamma}\sum_{n=l+1}^{\infty}\alpha_{n}\lambda_{n}\int_{ s_{0}}^{ s-1}e^{-\lambda_{n}(s-\tau)}e^{-\lambda_{l}\tau}e^{-\kappa \tau}\,d\tau\\
      &\lesssim e^{-\kappa s}e^{-\lambda_{l} s}y^{-\gamma}\sum_{n=l+1}^{\infty}\frac{\alpha_{n}\lambda_{n}}{\lambda_{n}-\lambda_{l}-\kappa}e^{-(\lambda_{n}-\lambda_{l})}.
    \end{split}
  \end{equation}
  The leading order coefficient, $\alpha_{n}$, grows with $n$, but
  only algebraically, which is easily countered by the exponential
  decay of $e^{-(\lambda_{n}-\lambda_{l})}$ as long as
  $ s- s_{0}\ge1$; therefore the sums in \eqref{eq:35} and
  \eqref{eq:6} converge and the proof is complete.
\end{proof}

\begin{Lem}
  \label{lem:long-unbounded}
  For any $\nu\in(0,1)$ there exists $R$ and $s_{0}$ large enough so
  that
  \begin{align*}
    \lvert\psi_{q}(s)(y)+e^{-\lambda_{l} s}\phi_{l}(y)\rvert \lesssim \nu e^{-\lambda_{l} s}y^{2\lambda_{l}}, \qquad R<y\le e^{(s- s_{0})/2}
  \end{align*}
  as long as $s>s_{0}+1$.
\end{Lem}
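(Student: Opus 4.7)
My plan is to extend the spectral-decomposition argument of Lemma~\ref{lem:long-bounded} from the bounded region $y\le R$ to the unbounded region $R<y\le e^{(s-s_0)/2}$. The main new ingredient is the exploitation of the opposite asymptotic regime in \eqref{eq:eigen}: whereas \eqref{eq:eigen0} gave $\phi_n(y)\sim\alpha_n y^{-\gamma}$ at $y=0$, here I use \eqref{eq:eigen1}, namely $|\phi_n(y)|\lesssim\beta_n y^{2\lambda_n}$ for $y\ge R$ with $R$ fixed but large enough that the subleading correction is negligible.

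I would start from the Duhamel formula of Corollary~\ref{th:duhamel} and split the nonlinear integral at $\tau=s-1$. The piece on $[s-1,s]$ is an $\mathcal{O}(1)$-time evolution and is disposed of by the short-time machinery developed in Section~\ref{sec:short-time} (applied after shifting the base time from $s_0$ to $s-1$); the condition $y\le e^{(s-s_0)/2}$ together with $s-\tau\le 1$ makes the maximal-function estimates of Lemma~\ref{estim-semigroup} directly applicable, and the pointwise bound on $F(\psi(\tau))$ from Lemma~\ref{lem:estim-f} still holds in this range. The remaining homogeneous term together with the memory integral over $[s_0,s-1]$ is then expanded on the orthonormal eigenbasis $\{\phi_n\}$, and the constraint $P_{s_0,s_1}(q)=0$ is used exactly as in the proof of Lemma~\ref{lem:long-bounded} to rewrite the expansion as a four-term sum analogous to \eqref{eq:5}--\eqref{eq:10}.

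The crucial algebraic identity is that, in the range $y\le e^{(s-s_0)/2}$,
\begin{align*}
y^{2\lambda_n}\,e^{-\lambda_n(s-s_0)}
=y^{2\lambda_l}\cdot\bigl(y^{2}e^{-(s-s_0)}\bigr)^{n-l}\cdot e^{-\lambda_l(s-s_0)}
\le y^{2\lambda_l}\,e^{-\lambda_l(s-s_0)},
\end{align*}
using the exact cancellation $\lambda_n-\lambda_l=n-l$. Therefore every term in the spectral expansion contributes a uniform factor $y^{2\lambda_l}e^{-\lambda_l s}$ multiplied by $\beta_n$ and the scalar product appearing in the Duhamel formula. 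Since $\beta_n\approx n^{-\omega/4}4^{-n}/n!$ (Lemma~\ref{th:extension}) decays super-exponentially in $n$, the sum converges regardless of the algebraic $n$-dependence of the coefficients produced by Lemmas~\ref{lem:linear-norm} and \ref{lem:nonlinear-norm}. The arbitrary smallness $\nu$ is then extracted from the $e^{-\kappa s_0}$ factors already established in those Lemmas and in Lemma~\ref{lem:ball}.

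The main obstacle I anticipate is that $|\phi_n(y)|\lesssim\beta_n y^{2\lambda_n}$ is an asymptotic statement for $y\to\infty$, whereas I need it uniformly for $y\in[R,e^{(s-s_0)/2}]$ and summed over all $n$. Choosing $R$ large removes the subleading $\mathcal{O}(y^{2\lambda_n-2})$ correction from \eqref{eq:eigen1} for each fixed $n$, but one must still verify that the implicit remainder in the uniform asymptotics of the Laguerre polynomials $L_n^{(\omega/2)}$ does not introduce an $n$-dependent prefactor that eats into the decay of $\beta_n$. A safe way around this is to bound $L_n^{(\omega/2)}(y^{2}/4)$ directly through its explicit series representation and check termwise that the coefficient of $y^{2k}$ is summable against $\mathcal{N}_n$; the exponential weight $e^{-y^2/4}$ built into the scalar products is what ultimately tames this combinatorial growth.
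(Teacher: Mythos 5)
Your plan handles the homogeneous term correctly, but it has a genuine gap in the treatment of the memory integral over $[s_0,s-1]$, and this is exactly where the paper's argument differs from yours in an essential way.

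Your ``crucial algebraic identity'' $y^{2\lambda_n}e^{-\lambda_n(s-s_0)}\le y^{2\lambda_l}e^{-\lambda_l(s-s_0)}$ (for $n\ge l$ and $y\le e^{(s-s_0)/2}$) is fine, but it only applies to terms whose $n$-dependence carries the full propagator factor $e^{-\lambda_n(s-s_0)}$. The memory integral does not. After projecting, the $n$-th contribution of $\int_{s_0}^{s-1}e^{-A(s-\tau)}F(\psi(\tau))\,d\tau$ is
\begin{equation*}
  \phi_n(y)\int_{s_0}^{s-1}e^{-\lambda_n(s-\tau)}\langle F(\psi(\tau)),\phi_n\rangle\,d\tau ,
\end{equation*}
and since $\lvert\langle F(\psi(\tau)),\phi_n\rangle\rvert\lesssim (n+1)e^{-(\lambda_l+\kappa')\tau}$ (Lemma~\ref{lem:nonlinear-norm}), for $\lambda_n>\lambda_l+\kappa'$ the $\tau$-integral is dominated by $\tau\approx s-1$ and produces a factor of size $\approx e^{-\lambda_n}e^{-(\lambda_l+\kappa')(s-1)}$, \emph{not} $e^{-\lambda_n(s-s_0)}$. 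Multiplying by $\lvert\phi_n(y)\rvert\lesssim \beta_n y^{2\lambda_n}$ and stripping off $y^{2\lambda_l}e^{-\lambda_l s}$, the residual sum is of the form $\sum_n \frac{4^{-n}}{n!}\,y^{2n}e^{-n}\cdot(\text{algebraic in }n)\approx e^{y^2/(4e)}$. The super-exponential decay of $\beta_n$ is \emph{not} enough: for $y$ near the top of your range, $y^2\approx e^{s-s_0}$, so this sum is double-exponential in $s-s_0$ and ruins the estimate. The obstacle you flagged (uniformity in $n$ of the Laguerre asymptotics) is real but secondary; the fatal issue is this $y^{2n}$-versus-$e^{-n}$ mismatch.

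The paper avoids this by introducing a $y$-dependent intermediate time $\bar s\in[s_0,s-1]$ chosen so that $e^{(s-\bar s)/2}\le y\le 2e^{(s-\bar s)/2}$, i.e. $y e^{-(s-\bar s)/2}\in[1,2]$. The Duhamel formula is restarted at $\bar s$; the full nonlinear integral over $[\bar s,s]$ is then bounded by the maximal-function machinery of Lemma~\ref{lem:estimate-short-inhomogeneous-extension} with $\bar s$ in the role of $s_0$ (so no eigenfunction sum ever appears for the nonlinear part), and for the homogeneous part $e^{-A(s-\bar s)}(\psi_q(\bar s)+e^{-\lambda_l\bar s}\phi_l)$ one first uses Lemma~\ref{lem:long-bounded} to obtain a small $\mathcal H$-norm bound for $\psi_q(\bar s)+e^{-\lambda_l\bar s}\phi_l$, and then expands in $\phi_n$. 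Because $ye^{-(s-\bar s)/2}$ is of order one, the sum $\sum_n\beta_n\bigl(ye^{-(s-\bar s)/2}\bigr)^{2\lambda_n-2\lambda_l}$ converges uniformly. Your approach keeps $\bar s=s_0$ throughout, which is precisely the choice that breaks for the nonlinear memory term. To repair your argument you would need to replace $s_0$ by this $y$-adapted $\bar s$, and replace the eigenfunction expansion of the nonlinear term by the maximal-function bound, at which point you recover the paper's proof.
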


\begin{proof}
  Let us now fix $y>R$ and an intermediate time
  $\bar s\in [s_{0},s-1]$, such that
  $e^{(s-\bar s)/2}\le y\le 2 e^{(s-\bar s)/2}$ (for any
  $y\in [R,e^{(s-s_{0})/2})$ it is possible to find such $\bar s$) and
  let us write $\psi_{q}(s)$ relative to $\psi_{q}(\bar s)$ as
  \begin{align*}
    \psi_{q}(s)+e^{-\lambda_{l}s}\phi_{l}=e^{-A(s-\bar s)}(\psi_{q}(\bar s)+e^{-\lambda_{l}\bar s}\phi_{l})+\int_{\bar s}^{s}e^{-A(s-\tau)}F(\psi_{q}(\tau))\,d\tau.
  \end{align*}
  In other words, we are using $\psi_{q}(\bar s)$ as an initial data to
  get $\psi_{q}(s)$ for $s\ge \bar s+1$.
  We then notice that the
  nonlinear term in the above expression can be bounded by the same
  logic as what led us to
  Lemma~\ref{lem:estimate-short-inhomogeneous-extension}, once we replace
  $s_{0}$ there by $\bar s$.
As a result, we obtain
  \begin{align*}
    \left\lvert\int_{\bar s}^{s}e^{-A(s-\tau)}F(\psi_{q}(\tau))(y)\,d\tau\right\rvert\lesssim e^{-\kappa \bar s} e^{-s\lambda_{l}}y^{-\gamma},\qquad y\in[e^{(s-\bar s)/2},e^{\sigma s}).
  \end{align*}
  Because $e^{-\kappa \bar s}\le e^{-\kappa s_{0}}$ we have just
  produced the required bound for the nonlinear term.

  As for the linear term, the previous Lemma, combined with the
  starting assumption on $\psi_{q}$ guarantees that
  \begin{align}
    \label{eq:19}
    \lvert\psi_{q}(s)(y)+e^{-\lambda_{l}s}\phi_{l}(y)\rvert\lesssim e^{-\lambda_{l}s}
    \begin{cases}
      e^{-\kappa s_{0}}C(R)y^{-\gamma}& y\le R,\\
      y^{2\lambda_{l}}&y>R.
    \end{cases}
  \end{align}
  In other words, we start with a bound that is already improved for
  $y\le R$.  Now observe that \eqref{eq:19} implies
  \begin{align*}
    \lVert\psi_{q}(s)+e^{-\lambda_{l}s}\phi_{l}\rVert^{2}&\lesssim e^{-2\lambda_{l}s}e^{-2\kappa s_{0}}C(R)^{2}\int_{0}^{R}y^{-2\gamma+d-1}\,dy+e^{-2\lambda_{l}s}\int_{R}^{\infty}y^{4\lambda_{l}+d-1}e^{-\frac{y^{2}}{4}}\,dy\\
                                                         &\lesssim e^{-2\lambda_{l}s}(e^{-2\kappa s_{0}}C_{1}(R)+C_{2}(R)).
  \end{align*}
  From the form of the integrals one can easily see that $C_{1}$
  increases with $R$, while $C_{2}$ tends to zero with $R$.  Thus for
  every $\nu\in (0,1)$ there exists $R=R(\nu)$ and $s_{0}=s_{0}(R)$,
  both large enough, so that the sum
  $e^{-2\kappa s_{0}}C_{1}(R)+C_{2}(R)$ can be made smaller than
  $\nu$, therefore
  \begin{align*}
    \lvert\langle \psi_{q}(s)+e^{-\lambda_{l}s}\phi_{l},\phi_{n}\rangle\rvert\le \lVert\psi_{q}(s)+e^{-\lambda_{l}s}\phi_{l}\rVert\lesssim \nu e^{-\lambda_{l}s}.
  \end{align*}
  We immediately get
  \begin{align*}
    \lvert e^{-A(s-\bar s)}(\psi_{q}(\bar s)+e^{-\lambda_{l}\bar s}\phi_{l})(y)\rvert
&\le\sum_{n=0}^{\infty}e^{-\lambda_{n}(s-\bar s)}\lvert\langle \psi_{q}(\bar{s}) +
e^{-\lambda_{l}\bar{s}}\phi_{l},\phi_{n}\rangle\rvert\lvert\phi_{n}(y)\rvert\\
                                                                                     &\lesssim \nu\sum_{n=0}^{\infty} e^{-\lambda_{n}(s-\bar s)}e^{-\lambda_{l}\bar s} \beta_{n}y^{2\lambda_{n}}\\
                                                                                     &=\nu e^{-\lambda_{l}s}y^{2\lambda_{l}}\sum_{n=0}^{\infty}\beta_{n}(ye^{-(s-\bar s)/2})^{2\lambda_{n}-2\lambda_{l}}.
  \end{align*}
  It now suffices to show that the last sum converges and can be
  bounded independently of $s$.  But this becomes evident from our
  choice of $\bar s$: we picked $\bar s$ such that
  $1\le y e^{-(s-\bar s)/2}\le 2$.  We simply split the sum into two
  components containing either negative and positive powers of
  $(ye^{-(s-\bar s)/2})$ to get
  \begin{align*}
    \sum_{n=0}^{\infty}\beta_{n}(ye^{-(s-\bar s)/2})^{2\lambda_{n}-2\lambda_{l}}&=\sum_{n=0}^{l}\beta_{n}(ye^{-(s-\bar s)/2})^{2\lambda_{n}-2\lambda_{l}}+\sum_{n=l+1}^{\infty}\beta_{n}(ye^{-(s-\bar s)/2})^{2\lambda_{n}-2\lambda_{l}}\\
                                                                                &\le \sum_{n=0}^{l}\beta_{n}+\sum_{n=l+1}^{\infty}\beta_{n}2^{2\lambda_{n}-2\lambda_{l}}.
  \end{align*}
  Thanks to $\beta_{n}$ behaving roughly as $n^{-\omega/4}4^{-n}/n!$ the
  last sum converges, while the first sum is is simply finite.
\end{proof}

\noindent
\textbf{Acknowledgments}.
The first author acknowledges financial support by the Sofja
Kovalevskaja award of the Humboldt Foundation endowed by the German
Federal Ministry of Education and Research held by Roland
Donninger.  Partial support by the DFG, SFB 1060 is also gratefully
acknowledged.  The second author was partly supported by Grant-in-Aid
for Research Activity Start-up (No. 6887027) and Kyushu University
Interdisciplinary Programs in Education and Projects in Research
Development (No. 2012DB4000).

\bibliographystyle{abbrvnat}
\bibliography{/home/pawel/Documents/Mendeley/library}

\end{document}